\begin{document}

\newtheorem{theorem}{Theorem}[subsection]
\newtheorem{lemma}[theorem]{Lemma}
\newtheorem{corollary}[theorem]{Corollary}
\newtheorem{conjecture}[theorem]{Conjecture}
\newtheorem{proposition}[theorem]{Proposition}
\newtheorem{question}[theorem]{Question}
\newtheorem*{answer}{Answer}
\newtheorem{problem}[theorem]{Problem}
\newtheorem*{main_theorem}{3-Manifolds Everywhere Theorem~\ref{theorem:random_acylindrical_subgroup}}
\newtheorem*{boundary_theorem}{Boundary Theorem~\ref{theorem:boundary_theorem}}
\newtheorem*{commensurability_theorem}{Commensurability Theorem~\ref{theorem:commensurability_theorem}}
\newtheorem*{thin_spine_theorem}{Thin Spine Theorem~\ref{theorem:thin_spine_theorem}}
\newtheorem*{claim}{Claim}
\newtheorem*{criterion}{Criterion}
\theoremstyle{definition}
\newtheorem{definition}[theorem]{Definition}
\newtheorem{construction}[theorem]{Construction}
\newtheorem{notation}[theorem]{Notation}
\newtheorem{convention}[theorem]{Convention}
\newtheorem*{warning}{Warning}

\theoremstyle{remark}
\newtheorem{remark}[theorem]{Remark}
\newtheorem{example}[theorem]{Example}
\newtheorem{scholium}[theorem]{Scholium}
\newtheorem*{case}{Case}

\def\id{\text{id}}
\def\Id{\text{Id}}
\def\1{{\bf{1}}}
\def\p{{\mathfrak{p}}}
\def\H{\mathbb H}
\def\Z{\mathbb Z}
\def\R{\mathbb R}
\def\C{\mathbb C}
\def\F{\mathbb F}
\def\P{\mathbb P}
\def\Q{\mathbb Q}
\def\E{{\mathcal E}}
\def\D{{\mathcal D}}
\def\A{{\mathcal A}}
\def\I{{\mathcal I}}
\def\density{\textnormal{density}}

\def\tra{\textnormal{tr}}
\def\length{\textnormal{length}}

\def\cube{\textnormal{cube}}

\newcommand{\marginal}[1]{\marginpar{\tiny #1}}

\title{3-Manifolds Everywhere}

\author{Danny Calegari}
\address{Department of Mathematics \\ University of Chicago \\
Chicago, Illinois, 60637}
\email{dannyc@math.uchicago.edu}
\author{Henry Wilton}
\address{DPMMS\\Centre for Mathematical Sciences\\Wilberforce Road\\Cambridge\\CB3 0WB\\UK}
\email{h.wilton@maths.cam.ac.uk}

\begin{abstract}
A random group contains many subgroups which are isomorphic to the
fundamental group of a compact hyperbolic 3-manifold with totally geodesic boundary. 
These subgroups can be taken to be quasi-isometrically embedded. This is true 
both in the few relators model, and the density model of random groups (at any
density less than a half).
\end{abstract}

\maketitle

\setcounter{tocdepth}{1}
\tableofcontents

\section{Introduction}

Geometric group theory was born in low-dimensional topology, in the collective visions of
Klein, Poincar\'e and Dehn. Stallings used key ideas from 3-manifold
topology (Dehn's lemma, the sphere theorem) to prove theorems about free groups, and as a model for
how to think about groups geometrically in general. The pillars of modern geometric group theory ---
(relatively) hyperbolic groups and hyperbolic Dehn filling, NPC cube complexes and their relations to
LERF, the theory of JSJ splittings of groups and the structure of limit groups --- all have their
origins in the geometric and topological theory of 2- and 3-manifolds.

Despite these substantial and deep connections, the role of 3-manifolds in the larger
world of group theory has been mainly to serve as a source of examples --- of specific groups, and
of rich and important phenomena and structure. Surfaces (especially Riemann surfaces)
arise naturally throughout all of mathematics (and throughout science more generally), 
and are as ubiquitous as the complex numbers. But the conventional view is surely that
3-manifolds {\it per se} do not spontaneously arise in other areas of geometry (or mathematics
more broadly) amongst the {\em generic} objects of study. We challenge this conventional view:
3-manifolds are everywhere.

\subsection{Random groups}

The ``generic'' objects in the world of finitely presented groups are the {\em random} groups, in the
sense of Gromov. There are two models of what one means by a random group, and we shall
briefly discuss them both.

First, fix $k\ge 2$ and fix a free generating set $x_1,x_2,\cdots,x_k$ for $F_k$, a free group of
rank $k$. A $k$-generator group $G$ can be given by a presentation
$$G:=\langle x_1,x_2,\cdots,x_k\; | \; r_1,r_2,\cdots,r_\ell\rangle$$
where the $r_i$ are cyclically reduced cyclic words in the $x_i$ and their inverses.

In the {\em few relators model} of a random group, one fixes $\ell \ge 1$, and then for a
given integer $n$, selects the $r_i$ independently and randomly (with the uniform distribution) from
the set of all reduced cyclic words of length $n$.

In the {\em density model} of a random group, one fixes $0 < D < 1$, and then for a given integer
$n$, define $\ell = \lfloor (2k-1)^{Dn} \rfloor$ and select the $r_i$ independently and randomly
(with the uniform distribution) from the set of all reduced cyclic words of length $n$.

Thus, the difference between the two models is how the number of relations ($\ell$) depends on
their length ($n$). In the few relators model, the absolute number of relations is fixed, whereas
in the density model, the (logarithmic) density of the relations among all words of the given length
is fixed.

For fixed $k,\ell,n$ in the few relators model, or fixed $k,D,n$ in the density model, we obtain
in this way a probability distribution on finitely presented groups (actually, on finite presentations).
For some property of groups of interest, the property will hold for a random group with some
probability depending on $n$. We say that the property holds for a random group 
{\em with overwhelming probability} if the probability goes to 1 as $n$ goes to infinity.

As remarked above, a ``random group'' really means a ``random presentation''. Associated to a
finite presentation of a group $G$ as above, one can build a 2-complex $K$ with one 0 cell, with
one 1 cell for each generator $x_i$, and with one 2 cell for each relation $r_j$, so that $\pi_1(K)=G$.
We are very interested in the geometry and combinatorics of $K$ (and its universal cover) in what follows.

\subsection{Properties of random groups}

All few-relator random groups are alike (with overwhelming probability).
There is a {\em phase transition} in the behavior of density random groups, discovered by
\cite{Gromov} \S~9: for $D>1/2$, a random group is either trivial or isomorphic to $\Z/2\Z$,
whereas at any fixed density $0<D<1/2$, a random group is infinite, hyperbolic, and 1-ended, and the
presentation determining the group is {\em aspherical} --- i.e.\/ the 2-complex $K$ defined
from the presentation has contractible universal cover. Furthermore,
\cite{Dahmani_Guirardel_Przytycki} showed that a random group
with density less than a half does not split, and has boundary homeomorphic to the {\em Menger sponge}.

\begin{figure}[htpb]
\labellist
\small\hair 2pt
\endlabellist
\centering
\includegraphics[scale=1]{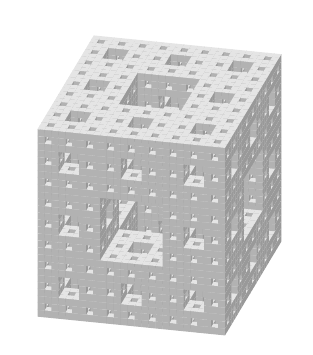}
\caption{The boundary of a random group is a Menger sponge}\label{sponge}
\end{figure}

The Menger sponge is obtained from the unit cube by subdividing it into 27 smaller cubes each
with one third the side length, then removing the central cube and the six cubes centered on each face;
and then inductively performing the same procedure with each of the remaining 20 smaller cubes; see
Figure~\ref{sponge}.

The Menger sponge has topological dimension 1, and is the {\em universal} compact space
with this property, in the sense that any compact Hausdorff space of topological dimension 1
embeds in it.

It is important to understand what kinds of abstract groups $H$ arise as subgroups of a random
group $G$. However, not all subgroups of a hyperbolic group are of equal importance: the most useful
subgroups, and those that tell us the most about the geometry of $G$, are the subgroups $H$ with the
following properties:
\begin{enumerate}
\item{the group $H$ itself is something whose intrinsic topological and 
geometric properties we understand very well; and}
\item{the intrinsic geometry of $H$ can be uniformly compared with the extrinsic geometry of its 
embedding in $G$.}
\end{enumerate}
In other words, we are interested in well-understood groups $H$ which are quasi-isometrically embedded
in $G$.

A finitely generated quasi-isometrically embedded subgroup $H$ of a hyperbolic group $G$ is itself hyperbolic,
and therefore finitely presented. The inclusion of $H$ into $G$ induces an embedding of Gromov boundaries
$\partial_\infty H \to \partial_\infty G$. Thus if $G$ is a random group, the boundary of $H$ has
dimension at most 1, and by work of \cite{Kapovich_Kleiner}, there is a
hierarchical description of all possible $\partial_\infty H$.

First of all, if $\partial_\infty H$ is disconnected, then one knows by
\cite{Stallings} that $H$ splits over a finite group. Second of all, if $\partial_\infty H$
is connected and contains a local cut point, then one knows by \cite{Bowditch} that 
either $H$ is virtually a surface group, or else $H$ splits over a cyclic group. Thus, apart from the
Menger sponge itself, understanding hyperbolic groups with boundary of dimension at most 1
reduces (in some sense) to the case that $\partial_\infty H$ is a Cantor set,
a circle, or a {\em Sierpinski carpet} --- i.e.\/ one of the ``faces'' of the Menger cube. The
Sierpinski carpet is universal for 1-dimensional compact Hausdorff {\em planar} sets of topological
dimension 1. Thus one is naturally led to the following fundamental question, which as far as we
know was first asked explicitly by Fran\c cois Dahmani:

\begin{question}[Dahmani]
Which of the three spaces --- Cantor set, circle, Sierpinski carpet --- arise as the boundary of
a quasiconvex subgroup of a random group?
\end{question}

Combining the main theorem of this paper with the results of \cite{Calegari_Walker} gives a complete
answer to this question: 

\begin{answer}
{\em  All three} spaces arise in a random group as the boundary of a quasiconvex subgroup
(with overwhelming probability).
\end{answer}
 
This is true in the few relators model with any positive number of relators, or the density model at any density
less than a half. The situation is summarized as follows:

\begin{enumerate}
\item{$\partial_\infty H$ is a Cantor set if and only if $H$ is virtually free (of rank at least 2); 
we can thus take
$H = \pi_1(\text{graph})$. The existence of free quasiconvex subgroups $H$ in arbitrary (nonelementary)
hyperbolic groups $G$ is due to Klein, by the ping-pong argument.}
\item{$\partial_\infty H$ is a circle if and only if $H$ is virtually a surface group (of genus
at least 2); we can thus take $H=\pi_1(\text{surface})$. The existence of surface subgroups $H$ in
random groups (with overwhelming probability) is proved by \cite{Calegari_Walker}, Theorem~6.4.1.}
\item{$\partial_\infty H$ is a Sierpinski carpet if $H$ is virtually the fundamental group of
a compact hyperbolic 3-manifold with (nonempty) totally geodesic boundary;
and if the Cannon conjecture is true, this is if and only if --- see \cite{Kapovich_Kleiner}. 
The existence of such 3-manifold subgroups $H$ in random groups (with overwhelming probability) 
is Theorem~\ref{theorem:random_acylindrical_subgroup} in this paper.}
\end{enumerate}

Explicitly, we prove:

\begin{main_theorem}
Fix $k\ge 2$. A random $k$-generator group --- either in the few relators model
with $\ell \ge 1$ relators, or the density model with density $0<D<1/2$ --- and relators
of length $n$ contains many quasi-isometrically embedded subgroups isomorphic to the fundamental group 
of a hyperbolic 3-manifold with totally geodesic boundary, with probability $1-O(e^{-n^C})$ for some $C>0$.
\end{main_theorem}

Our theorem applies in particular in the few relators model where $\ell = 1$. In fact, if one fixes
$D<1/2$, and starts with a random 1-relator group $G = \langle F_k \; | \; r \rangle$, one can
construct a quasiconvex 3-manifold subgroup $H$ (of the sort which is guaranteed by the theorem) which
stays injective and quasiconvex (with overwhelming probability) as a further $(2k-1)^{Dn}$ relators are added.

\subsection{Commensurability}

Two groups are said to be {\em commensurable} if they have isomorphic subgroups of finite index.
Commensurability is an equivalence relation, and it is natural to wonder what commensurability
classes of 3-manifold groups arise in a random group.

We are able to put very strong constraints on the commensurability classes of the
3-manifold groups we construct. It is probably too much to hope to be able to
construct a subgroup of a {\em fixed} commensurability class. But we can arrange
for our 3-manifold groups to be commensurable with some element of a family of finitely generated
groups given by presentations which differ only by varying the order of torsion of a specific element.
Hence our 3-manifold subgroups are all commensurable
with Kleinian groups of bounded convex covolume (i.e.\/ the convex hulls of the
quotients have uniformly bounded volume). Explicitly:

\begin{commensurability_theorem}
A random group at any density $<1/2$ or in the few relators model contains 
(with overwhelming probability) a subgroup commensurable with the Coxeter group $\Gamma(m)$
for some $m\ge 7$, where $\Gamma(m)$ is the Coxeter group with Coxeter diagram
\begin{center}
\begin{picture}(70,10)(0,0)
\put(5,5){\circle*{3}}
\put(35,5){\circle*{3}}
\put(65,5){\circle*{3}}
\put(95,5){\circle*{3}}
\put(5,5){\line(1,0){90}}
\put(16,9){$m$}
\end{picture}
\end{center}
\end{commensurability_theorem}

The Coxeter group $\Gamma(m)$ is commensurable with the group generated by reflections in the
sides of a regular {\em super ideal} tetrahedron --- one with vertices ``outside'' the sphere
at infinity --- and with dihedral angles $\pi/m$. 

\subsection{Plan of the paper}

We now describe the outline of the paper. 

As discussed above, our random groups $G$ come together with the data of a finite presentation 
$$G:=\langle x_1,\cdots,x_k \;|\; r_1,\cdots,r_\ell\rangle$$
From such a presentation we can build in a canonical way a 2-complex $K$ with 1-skeleton $X$, where
$X$ is a wedge of $k$ circles, and $K$ is obtained by attaching $\ell$ disks along loops in $X$
corresponding to the relators; so $\pi_1(K) = G$.

Our 3-manifold subgroups arise as the fundamental groups of 2-complexes $\overline{M}(Z)$ that
come with immersions $\overline{M}(Z) \to K$ taking (open) cells of $\overline{M}(Z)$ homeomorphically
to cells of $K$. Thus, for every 2-cell of $\overline{M}(Z)$, the attaching map of its boundary
to the 1-skeleton $Z$ factors through a map onto one of the relators of the given presentation of $G$.

One way to obtain such a complex $\overline{M}(Z)$ is to build a 1-complex $Z$ as a quotient
of a collection $L$ of circles together with an immersion $Z \to X$ where $X$ is the 1-skeleton
of $K$, and the map $L \to X$ takes each component to the image of a relator. We call data of this
kind a {\em spine}. In \S~\ref{section:spines} we describe the topology of spines and give
sufficient combinatorial conditions on a spine to ensure that $\overline{M}(Z)$ is 
homotopic to a 3-manifold. 

Since  $X$ is a rose whose edges are endowed with a choice of orientation and labelling by the generators $x_i$, we will usually encode a map of graphs $\Gamma\to X$ by labelling (oriented) edges by $x_i^{\pm1}$, in the spirit of \cite{Stallings_graphs}.  As usual, if an oriented edge $e$ of $\Gamma$ is labelled $x_i^{\pm1}$ then the oriented edge $\bar{e}$ with the reverse orientation is labelled $x_i^{\mp1}$.  Note that there is a simple condition to ensure that such a map $\Gamma\to X$ is an immersion: one simply requires that no two oriented edges of $\Gamma$ incident at the same vertex have the same label.  We call such a graph $\Gamma$ \emph{folded} (also in the spirit of \cite{Stallings_graphs}).  

In \S~\ref{section:thin_spine_section} we prove the Thin Spine Theorem,
which says that we can build such
a spine $L \to Z$, satisfying the desired combinatorial conditions,
and such that every edge of the 1-skeleton $Z$ is {\em long}. Here we measure
the length of edges of $Z$ by pulling back length from $X$ under the immersion $Z\to X$, 
where  each edge of $X$ is normalized to have length $1$. 
In fact, if we let $G_1$ denote the 1-relator group 
$$G_1:=\langle x_1,\cdots,x_k\;|\; r_1\rangle$$
with associated 2-complex $K_1$ which comes with a tautological inclusion
$K_1 \to K$, then our thin spines have the property that the
immersion $\overline{M}(Z) \to K$ factors through $\overline{M}(Z) \to K_1$. 

For technical reasons, rather than working with a random relator $r_1$, 
we work instead with a relator which is merely sufficiently {\em pseudorandom} 
(a condition concerning equidistribution of subwords with controlled error on certain scales),
and the theorem we prove is deterministic. Of course, the definition of pseudorandom is such
that a random word will be pseudorandom with very high probability.

Explicitly, we prove:

\begin{thin_spine_theorem}
For any $\lambda>0$ there is $T\gg \lambda$ and $\epsilon\ll 1/T$ so that, 
if $r$ is $(T,\epsilon)$-pseudorandom and $K$ is the 2-complex associated 
to the presentation $G:=\langle F_k\;|\; r\rangle$, then there is a spine 
$f:L \to Z$ over $K$ for which $L$ is a union of 648 circles (or 5,832 circles if $k=2$), and every edge 
of $Z$ has length at least $\lambda$.
\end{thin_spine_theorem}

This is by far the longest section in the paper, and it involves
a complicated combinatorial argument with many interdependent steps. It should be remarked that
one of the key ideas we exploit in this section is the method of {\em random matching with
correction}: randomness (actually, pseudorandomness) is used to show that the desired combinatorial
construction can be performed {\em with very small error}. In the
process, we build a {\em reservoir} of small independent pieces which may be adjusted by various
local moves in such a way as to ``correct'' the errors that arose at the random matching step.
Similar ideas were also used by \cite{Kahn_Markovic} in their proof of the
Ehrenpreis Conjecture, by \cite{Calegari_Walker} in their construction of
surface subgroups in random groups, and by \cite{Keevash} in his construction of 
General Steiner Systems and Designs. Evidently this method is extremely powerful, and its
full potential is far from being exhausted.

The Thin Spine Theorem can be summarized by saying that as a graph, 
$Z$ has bounded valence, but very long edges. This means that
the image of $\pi_1(Z)$ in $\pi_1(X)$ induced by the inclusion $Z \to X$ is very ``sparse'', in 
the sense that the ball of radius $n$ in $\pi_1(X)$ contains $O(3^{n/\lambda})$ elements of
$\pi_1(Z)$, where we can take $\lambda$ as big as we like. This has the following consequence:
when we obtain $G_1$ as a quotient of $\pi_1(X)$ by adding $r_1$ as a relator, we should not kill any
``accidental'' elements of $\pi_1(Z)$, so that the image of $\pi_1(Z)$ will be isomorphic to
$\pi_1(\overline{M}(Z))$, a 3-manifold group. 

This idea is fleshed out in 
\S~\ref{bead_decomposition_section}, and shows that random 1-relator groups contain 3-manifold
subgroups, although at this stage we have not yet shown that the 3-manifold is of the desired
form. The argument in this section depends on a so-called {\em bead decomposition}, which
is very closely analogous to the bead decomposition used to construct surface groups by
\cite{Calegari_Walker}, and the proof is very similar.

In \S~\ref{acylindricity_section} we show that the 3-manifold homotopic to the 2-complex
$\overline{M}(Z)$ is acylindrical; equivalently, that it is homeomorphic to a compact
hyperbolic 3-manifold with totally geodesic boundary.
This is a step with no precise analog in \cite{Calegari_Walker},
but the argument is very similar to the argument showing that $\overline{M}(Z)$ is injective in the
1-relator group $G'$. There are two kinds of annuli to rule out: those that use 2-cells of
$\overline{M}(Z)$, and those that don't. The annuli without 2-cells are ruled out by the
combinatorics of the construction. Those that use 2-cells are ruled out by a small cancellation
argument which uses the thinness of the spine. So at the end of this section, we have shown
that random 1-relator groups contain subgroups isomorphic to the fundamental groups of
compact hyperbolic 3-manifolds with totally geodesic boundary.

Finally, in \S~\ref{endgame_section}, we show that the subgroup $\pi_1(\overline{M}(Z))$ stays
injective as the remaining $\ell-1$ random relators are added. The argument here stays extremely
close to the analogous argument in \cite{Calegari_Walker}, and depends (as \cite{Calegari_Walker} did)
on a kind of small cancellation theory for random groups developed by \cite{Ollivier}. 
This concludes the proof of the main theorem.

A further section \S~\ref{section:commensurability} 
proves the Commensurability Theorem. 
The proof is straightforward given the technology  
developed in the earlier sections.

\section{Spines}\label{section:spines}

\subsection{Trivalent fatgraphs and spines}

Before introducing spines, we first motivate them by describing the analogous, but simpler,
theory of fatgraphs.

A {\em fatgraph} $Y$ is a (simplicial) graph together with a cyclic ordering of the edges incident to
each vertex. This cyclic ordering can be used to canonically ``fatten'' the graph $Y$ so that it
embeds in an oriented surface $S(Y)$ with boundary, in such a way that $S(Y)$ deformation retracts 
down to $Y$. Under this deformation retraction, the boundary $\partial S(Y)$ maps to $Y$ in such a
way that the preimage of each edge $e$ of $Y$ consists of two intervals $e^\pm$ 
in $\partial S(Y)$, each mapping homeomorphically to $e$, with opposite orientations.

Abstractly, the data of a fatgraph can be given by an ordinary graph $Y$, a 1-manifold $L$, and
a locally injective simplicial map $f:L \to Y$ of (geometric) ``degree 2'';
i.e.\/ such that each edge of $Y$ is in the image of
two intervals in $L$. The surface $S(Y)$ arises as the mapping cone of $f$.
If one orients $L$ and insists that the preimages of each edge have opposite
orientations, the result is a fatgraph and an oriented surface as above. If one does not 
insist on the orientation condition, the mapping cone need not be orientable. Attaching a disk
along its boundary to each component of $L$ produces a {\em closed} surface, which we denote
$\overline{S}(Y)$.

We would like to discuss a more complicated object called a {\em spine}, 
for which the analog of $\overline{S}(Y)$ is a 2-complex
homotopy equivalent to a compact 3-manifold with boundary. The 2-complex will arise by
gluing 2-dimensional disks onto the components of a 1-manifold $L$, and then attaching these disks
to the mapping cone of an immersion $f:L \to Z$ where $Z$ is a 4-valent graph, and the map
$f$ is subject to certain local combinatorial constraints.

The first combinatorial constraint is that the map $f:L \to Z$ should be ``degree 3''; that is, the preimage
of each edge of $L$ should consist of three disjoint intervals in $L$, each mapped homeomorphically by $f$.

Since $Z$ is 4-valent, at each vertex $v$ of $Z$ we have 12 intervals in $L$ that map to the incident edges;
these 12 intervals should be obtained by subdividing 6 disjoint intervals in $L$, where the dividing point
maps to $v$. Since $L \to Z$ is an immersion, near each dividing point the given interval in $L$ runs locally
from one edge incident to $v$ to a different one. There are three local models (up to symmetry) of how
six edges of $L$ can locally run over a 4-valent vertex $v$ of $Z$ so that they run over each incident edge
(in $Z$) to $v$ three times (this notion of ``local model'' is frequently called a
{\em Whitehead graph} in the literature). 
These three local models are illustrated in Figure~\ref{local_model}.

\begin{figure}[htpb]
\labellist
\small\hair 2pt
\pinlabel $\text{Type A}$ at -200 0
\endlabellist
\centering
\includegraphics[scale=0.7]{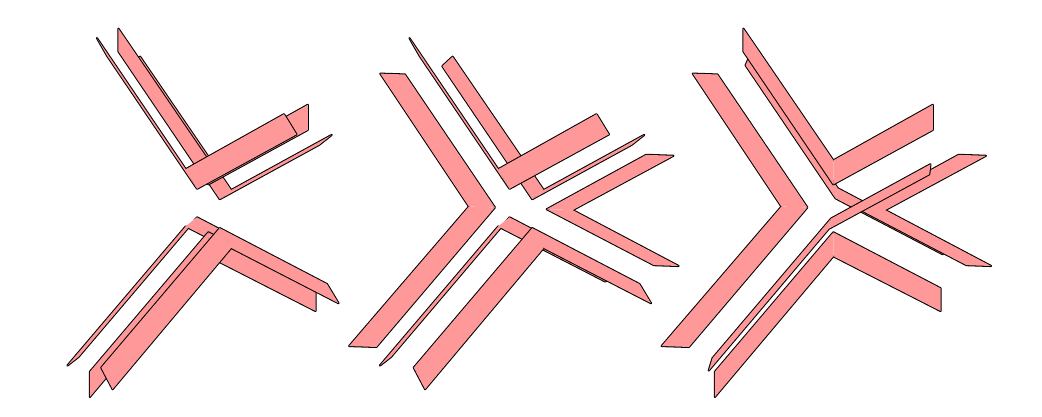}
\caption{Three local models}\label{local_model}
\end{figure}

The third local model is distinguished by the property that for each pair of edges of $Z$ adjacent to
$v$, there is exactly one interval in $L$ running over $v$ from one edge to the other. We say that
such a local model is {\em good}. The second combinatorial constraint is that the local model at every
vertex of $Z$ is good.

If $f:L \to Z$ is good, and $M(Z)$ is the mapping cone of $f$, then the 2-complex $M(Z)$ can be
canonically thickened to a 3-manifold, since a neighborhood of the mapping cone near a
vertex $v$ embeds in $\R^3$ in such a way that the tetrahedral symmetry of the combinatorics is
realized by symmetries of the embedding. Similarly, along each edge the dihedral symmetry is
realized by the symmetry of the embedding. The restriction of this thickening to each
component of $L$ is the total space of an $I$-bundle over the circle; we say that $f:L \to Z$
is {\em co-oriented} if each of these $I$-bundles is trivial. The third combinatorial constraint
is that $f:L \to Z$ is co-oriented.

\begin{definition}\label{definition:spine}
A {\em spine} is the data of a compact 1-manifold $L$ and a 4-valent graph $Z$, together with a 
co-oriented degree 3 immersion $f:L \to Z$ whose local model at every vertex of $Z$ is good. 
If $f:L \to Z$ is a spine, we denote the mapping cone by $M(Z)$, and by $\overline{M}(Z)$ 
the 2-complex obtained by capping each component of $L$ in $M(Z)$ off by a disk.
\end{definition}

\begin{lemma}[Spine thickens]\label{lemma:spine_thickens}
Let $f:L \to Z$ be a spine. Then $\overline{M}(Z)$ is canonically
homotopy equivalent to a compact 3-manifold with boundary.
\end{lemma}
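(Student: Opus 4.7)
The plan is to build the thickening $N$ by assembling local pieces dictated by the combinatorial data of the spine, and then exhibiting $\overline{M}(Z)$ as a natural spine onto which $N$ collapses. Since the construction of the pieces depends only on the local data at vertices and edges of $Z$, and the gluing is forced by the good local model and co-orientation, the resulting 3-manifold $N$ is canonical up to a canonical homeomorphism, which is what is needed for a canonical homotopy equivalence.

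First I would thicken a neighborhood of each vertex $v$ of $Z$. Because the local model at $v$ is good, the six arc-germs of $L$ running over $v$ realize the complete graph $K_4$ on the four edge-germs of $Z$ at $v$. I embed this configuration with tetrahedral symmetry: place $v$ at the center of a small regular tetrahedron in $\R^3$, send the four edge-germs of $Z$ to the cones on the four face centers, and send the six arc-germs of $L$ to the cones on the six edges. A regular neighborhood of this picture in $\R^3$ is a 3-ball $B_v$ whose boundary is a sphere cut by $L$ into four hexagonal discs, one for each edge-germ of $Z$ at $v$. Next I thicken a neighborhood of each open edge $e$ of $Z$: the three arc-germs of $L$ over $e$ form a ``triple book,'' which I thicken to a product $D^2\times I$ in which the three arc-germs sit on $S^1\times I$ as three parallel longitudes. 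The dihedral symmetry forces the gluing at each end of $e$ to the hexagonal discs on the $B_v$'s. Performing all such gluings produces a compact 3-manifold $N_0$ with boundary, naturally containing $M(Z)$ as a spine; in $\partial N_0$ each component of $L$ appears as the core of a regular annular neighborhood which is an $I$-bundle over a circle.

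The co-orientation hypothesis is precisely what guarantees that each of these $I$-bundles is \emph{trivial}, i.e.\ an actual annulus $S^1\times I$ in $\partial N_0$ rather than a M\"obius band. Thus for each component $\ell$ of $L$ I can canonically attach a 2-handle $D^2\times I$ along this annulus; equivalently, I fill the solid-torus collar of $\ell$ in $N_0$ by a 3-ball so that the core disc is the disc used to cap $\ell$ in passing from $M(Z)$ to $\overline{M}(Z)$. Call the result $N$. Each 3-ball $B_v$ collapses onto the cone on its vertex structure, each edge thickening collapses onto the corresponding edge cone, and each attached 2-handle collapses onto its core disc; these collapses agree on overlaps, yielding a canonical deformation retraction $N\to\overline{M}(Z)$.

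I expect the main subtlety to be bookkeeping the gluing of the edge pieces $D^2\times I$ to the vertex balls $B_v$: one has to verify that the combinatorial identifications among the three arc-germs at the two ends of an edge $e$ (as specified by the good local models at the endpoints) are compatible with a product structure, and that the twists accumulated while traveling around a component of $L$ are measured precisely by the co-orientation obstruction. Once this compatibility is checked, the remainder of the argument is a routine collapse, and the canonicity is automatic because every choice made (the tetrahedral embedding, the trivialization of each $I$-bundle from the given co-orientation) is dictated by the spine data up to isotopy.
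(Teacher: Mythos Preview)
Your proposal is correct and follows essentially the same approach as the paper: thicken $M(Z)$ to a 3-manifold using the tetrahedral embedding at vertices and the dihedral embedding along edges (exactly as the paper sketches in the paragraphs preceding the lemma), observe that the restriction of this thickening to each component of $L$ is an $I$-bundle which is trivial precisely by the co-orientation hypothesis, and then attach 2-handles along the resulting annuli. The paper's proof is a terse three-sentence summary of the same construction you have spelled out in detail.
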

\begin{proof}
We have already seen that $M(Z)$ has a canonical thickening to a compact 3-manifold in such a
way that the restriction of this thickening to each component of $L$ is an $I$-bundle. The total space
of this $I$-bundle is an annulus embedded in the boundary of $M(Z)$, and we may therefore attach
a 2-handle with core the corresponding component of $L$ providing this $I$-bundle is trivial. But
that is exactly the condition that $f:L \to Z$ should be co-oriented.
\end{proof}

By abuse of notation, we call $\overline{M}(Z)$ the {\em thickening} of $Z$.

\subsection{Tautological immersions}

Now let's fix $k\ge 2$ and a free group $F_k$ on $k$ fixed generators. Let $X$ be a rose for $F_k$;
i.e.\/ a wedge of $k$ (oriented) circles, with a given labeling by the generators of $F_k$.
Let $G$ be a random group (in whatever model) at length $n$. Each relator $r_i$ is a cyclically reduced
word in $F_k$, and is realized geometrically by an immersion of an oriented circle $\iota_i:S^1_i \to X$.
Attaching a disk along each such circle gives rise to the 2-complex $K$ described in the introduction
with $\pi_1(K) = G$.

\begin{definition}
A spine {\em over $K$} is a spine $f:L \to Z$ together with an immersion $g:Z \to X$ such that for
each component $L_i$ of $L$, there is some relator $r_j$ and a simplicial homeomorphism 
$g_i:L_i \to S^1_j$ for which $\iota_j g_i = g f$.
\end{definition}

The existence of the simplicial homeomorphisms $g_i$ lets us label the components $L_i$ by the
corresponding relators in such a way that the map $f:L \to Z$ has the property that the preimages of
each edge of $Z$ get the same labels, at least if we choose orientations correctly, and use the convention
that changing the orientation of an edge replaces the label by its inverse. So we can equivalently
think of the labels as living on the oriented edges of $Z$. Notice that the maps $g_i$, if they exist
at all, are {\em uniquely determined} by $g,f,\iota_j$ (at least if the presentation is not redundant,
so that no relator is equal to a conjugate of another relator or its inverse).

Evidently, if $f:L \to Z$, $g:Z \to X$ is a spine over $K$, the immersion $g:Z \to X$ extends
to an immersion of the thickening $\overline{g}:\overline{M}(Z) \to K$, that we call the
{\em tautological immersion}.

Our strategy is to construct a spine over $K$ for which $\overline{M}(Z)$ is
homotopy equivalent to a compact hyperbolic 3-manifold with geodesic boundary, 
and for which the tautological immersion induces a quasi-isometric embedding on $\pi_1$.

\section{The Thin Spine Theorem}\label{section:thin_spine_section}

The purpose of this section is to prove the Thin Spine Theorem, the analog in our context of
the Thin Fatgraph Theorem from \cite{Calegari_Walker}. 

In words, this theorem says that if $r$ is a sufficiently long random cyclically reduced
word in $F_k$ giving rise to a random 1-relator group $G:=\langle F_k\;|\; r\rangle$ with
associated 2-complex $K$, then with overwhelming probability, there is a good spine $f:L \to Z$
over $K$ for which every edge of $Z$ is as long as we like; colloquially, the spine is {\em thin}.

For technical reasons, we prove this theorem merely for sufficiently ``pseudorandom''
words, to be defined presently.

\subsection{Pseudorandomness}\label{subsection:pseudorandomness}

Instead of working directly with random chains, we use a deterministic variant
called {\em pseudorandomness}.

\begin{definition}\label{definition:pseudorandom}
Let $\Gamma$ be a cyclically reduced cyclic word in a free group $F_k$ with $k\ge 2$ generators.
We say $\Gamma$ is {\em $(T,\epsilon)$-pseudorandom} if the following is true: if we pick any
cyclic conjugate of $\Gamma$, and write it as a reduced product of reduced words $\{w_1,\ldots,w_n\}$ of length $T$ (and at most one word $v$ of length $<T$) 
$$\Gamma: = w_1w_2w_3\cdots w_nv$$
(so $n=\lfloor |r|/T\rfloor$) then for every reduced word $\sigma$ of length $T$ in $F_k$, there is an estimate
$$1-\epsilon \le \frac {\# \lbrace i \text{ such that } w_i = \sigma \rbrace} 
n \cdot (2k)(2k-1)^{T-1}\le 1+\epsilon$$
Here the factor $(2k)(2k-1)^{T-1}$ is simply the number of reduced words in $F_k$ of length $T$.
Similarly, we say that a collection of $n$ reduced words $\{w_i\}$ each of length $T$ is 
{\em $\epsilon$-pseudorandom} if for every reduced word $\sigma$ of length $T$ in $F_k$ the
estimate above holds.
\end{definition}

For any $T,\epsilon$, a random reduced word of length $N$ will be $(T,\epsilon)$-pseudorandom
with probability $1-O(e^{-N^c})$ for a suitable constant $c(T,\epsilon)$. This follows
immediately from the standard Chernoff inequality for the stationary Markov process that produces
a random reduced word in a free group  (cf.\ \cite[Lemma 3.2.2]{Calegari_Walker}).

With this definition in place, the statement of the Thin Spine Theorem is:

\begin{theorem}[Thin Spine Theorem]\label{theorem:thin_spine_theorem}
For any $\lambda>0$ there is $T\gg \lambda$ and $\epsilon\ll 1/T$ so that, 
if $r$ is $(T,\epsilon)$-pseudorandom and $K$ is the 2-complex associated 
to the presentation $G:=\langle F_k\;|\; r\rangle$, then there is a spine 
$f:L \to Z$ over $K$ for which $L$ is a union of 648 circles (or 5,832 circles if $k=2$), and every edge 
of $Z$ has length at least $\lambda$.
\end{theorem}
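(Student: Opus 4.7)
The plan is to apply the ``random matching with correction'' method advertised in the introduction: build an approximate spine from pseudorandom pieces of $r$, then absorb the residual defects using a reservoir of adjustable pieces set aside at the outset.

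\textbf{Random matching.} First, I would cut each of the $216$ copies of $r$ into $n/T$ consecutive arcs of length exactly $T$, giving $216n/T$ labelled arcs of $L$. The $(T,\epsilon)$-pseudorandomness hypothesis says each reduced $T$-word $\sigma$ is carried by $(1\pm\epsilon)\cdot 216n/[T\cdot(2k)(2k-1)^{T-1}]$ arcs, so every label is represented in large multiplicity. Edges of $Z$ are then formed by grouping arcs of a common label in triples, realising the degree-$3$ condition with every edge of length $T\geq\lambda$; vertices of $Z$ are formed by grouping in sextuples the junctions that separate consecutive arcs on the circles of $L$. A \emph{good} vertex is a sextuple of junctions whose incident arc-ends realise each of the $\binom{4}{2}=6$ unordered pairs in some quadruple of labels $\{a,b,c,d\}$ exactly once, i.e.\ an abstract ``tetrahedron'' on those four edges. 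Varying the starting position of the partition of $r$ shows that $(T,\epsilon)$-pseudorandomness also controls overlapping $T$-windows, and hence the pair-types $(w_i,w_{i+1})$ of consecutive arcs, to sufficient precision. A random nibble-style matching can then absorb the bulk of the junctions into good vertices.

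\textbf{Correction via a reservoir.} The key difficulty is handling the $O(\epsilon)$-fraction of leftover junctions, the small surpluses left when the bucket-sizes are not exactly divisible by $3$, and any parity obstruction to co-orientation, all without damaging the already-matched pieces. Before the random matching, I would set aside a $\delta$-fraction of arcs (with $\epsilon\ll\delta\ll 1$) as an adjustable reservoir. Using the pseudorandom abundance of arcs of every label, one exhibits a rich family of local swap moves supported inside the reservoir: each residual defect type (a leftover junction of a given pair-type, an unmatched triple of arcs of a given label, or a co-orientation parity failure on a circle of $L$) is cancelled by a sequence of swaps that creates no new defect. Co-orientation, being an $\F_2$-cohomological condition on each of the $216$ components of $L$, is toggled by a final reservoir move. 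The factor $216$ (and $1944$ when $k=2$) is calibrated precisely so that the reservoir has enough combinatorial diversity to cover every defect class; the extra factor of $9$ when $k=2$ compensates for the slower alphabet growth $(2k-1)^T=3^T$.

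\textbf{Main obstacle.} The hardest step is the combinatorial engineering of the reservoir and the swap system: one must verify simultaneously that (a) the reservoir is large and equidistributed enough to absorb every defect type predicted by the probabilistic analysis, (b) each swap preserves the degree-$3$, good-vertex, and length-$\geq\lambda$ conditions away from the defect being fixed, and (c) the correction process terminates. This is the exact analog, for $3$-manifold spines, of the Thin Fatgraph argument of \cite{Calegari_Walker} and of Keevash's existence theorem for generalised designs \cite{Keevash}; the extra dimension (tetrahedra of labels rather than pairs) is what makes this section substantially longer than its surface-subgroup counterpart.
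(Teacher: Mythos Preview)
Your outline captures the high-level philosophy (random matching plus reservoir correction) but misses the central combinatorial difficulty, and your account of where the constant $216$ comes from is incorrect.

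\textbf{The coupling problem.} You propose to form edges of $Z$ by grouping length-$T$ arcs in triples by label, and \emph{separately} to form good vertices by grouping junctions in sextuples. But these two matchings are not independent: each arc of $L$ has a fixed predecessor and successor in its copy of $r$, so once three arcs are declared to be one edge of $Z$, the six junctions at its ends are already paired off with specific neighbouring arcs. Generically this forced junction structure will \emph{not} be a good local model, and conversely, matching junctions into good sextuples forces the arc-triples, which then generically fail to carry a common label. The paper resolves this coupling by working at an intermediate scale $N\lambda$ with $\lambda \ll N\lambda \ll T$: one matches entire \emph{long strips} of length $N\lambda$ into ``compatible triples'' (Definition~\ref{definition:compatible}), where compatibility means the odd-indexed (``sticky'') $\lambda$-segments agree and the even-indexed (``free'') segments have pairwise distinct adjacent letters. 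Bunching such a triple produces, in one stroke, $(N+1)/2$ correctly-labeled edges \emph{and} $(N-1)/2$ good vertices, with the free segments left over as a reservoir of ``football bubbles'' (theta graphs). Your framework has no analogue of this simultaneous edge-and-vertex construction, and without it the random matching step does not get off the ground.

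\textbf{The factor $216$.} This number is not a calibration of reservoir diversity. It is $216 = 2^3 \cdot 3^3$, arising because the construction passes through six stages, each of which requires taking $2$ or $3$ parallel copies to clear a denominator: super-compatible regluing ($\times 3$), tearing the remainder into bizenes and bicrowns ($\times 2$), trades to adjust the football inventory ($\times 3$), the $8$-prism move gluing bizenes ($\times 2$), the $12$-prism move gluing bicrowns ($\times 3$), and the final co-orientation swap ($\times 2$). The extra factor of $9$ when $k=2$ is because super-compatible $4$-tuples (Definition~\ref{definition:super-compatible}) require four pairwise-distinct letters at each free position and hence do not exist in rank $2$; the replacement move in \S\ref{rank_2_subsection} takes nine copies of each excess long strip. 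It has nothing to do with the alphabet growth rate $(2k-1)^T$.

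\textbf{The reservoir and the correction moves.} The paper's reservoir is not a pool of raw arcs but a structured collection of football bubbles, and the correction system is a specific hierarchy of geometric moves --- tears (\S\ref{subsection:tear}), trades (\S\ref{subsection:trades}), and cube/prism moves (\S\ref{subsection:cube_move}) --- each with a precisely tracked effect on the distribution of bubble types, the mass of the remainder, and the ``slack''. Your generic swap system would need to reproduce all of this bookkeeping; as stated it is too vague to verify termination or the preservation of the good-vertex condition.
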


The strange appearance of the number 648 (or 5,832 for $k=2$) in the statement of this theorem reflects the method of proof. First of all, observe that if $f:L \to Z$ is any spine, then since $f$ has degree 3, the total length of $L$ is divisible by 3. If this spine is over $K$, then each component of $L$ has length $|r|$, and if we make no assumptions about the value of $|r|$ mod 3, then it will be  necessary in general for the number of components of $L$ to be divisible by 3. 

Our argument is to gradually glue up more and more of $L$, constructing $Z$ as we go. At an intermediate
stage, the remainder to be glued up consists of a collection of disjoint segments from $L$, and the
power of our method is precisely that this lets us reduce the gluing problem to a collection of
{\em independent} subproblems of {\em uniformly bounded} size. But each of these subproblems must
involve a subset of $L$ of total length divisible by 3 or 6, and therefore it is necessary to ``clear
denominators'' (by taking 2 or 3 disjoint copies of the result of the partial construction) several times
to complete the construction (in the case of rank 2 one extra move might require a further factor
of 9).

Finally, at the last step of the construction, we take 2 copies of $L$ and perform 
a final adjustment to satisfy the co-orientation condition.

The remainder of this section is devoted to the proof of Theorem~\ref{theorem:thin_spine_theorem}.

\subsection{Graphs and types}\label{subsection:graphs_and_types}

Let $L$ be a labeled graph consisting of 648 disjoint cycles (or 5,832 disjoint cycles if $k=2$), 
each labeled by $r$.  We will build the spine $Z$ and the map  $f:L \to Z$ in stages.   We think of $Z$ as a quotient space of $L$, obtained by 
identifying segments in $L$ with the same labels. So the construction of $Z$ proceeds 
by inductively identifying more and more segments of $L$, so that at each stage some 
portion of $L$ has been ``glued up'' to form part of the graph $Z$, and some remains 
still unglued. 

We introduce the following notation and terminology.   Let $\Lambda_0$ be a single cycle 
labeled $r$.  At the $i$th stage of our construction, we deal with a partially glued 
graph $\Lambda_i$, constructed from a certain number of copies of $\Lambda_{i-1}$ via 
certain ungluing and gluing moves.  At each stage, $\Lambda_i$ is equipped with a labelling, defining a map $\Lambda_i\to X$.  We will always be careful to ensure that $\Lambda_i$ is folded, i.e.\ that the map $\Lambda_i\to X$ is an immersion.  The glued subgraph of $\Lambda_i$ is denoted by 
$\Gamma_i$, and the unglued subgraph by $\Upsilon_i$.  Shortly, the unglued subgraph 
$\Upsilon_i$ will be expressed as the disjoint union of two subgraphs: the 
\emph{remainder} $\Delta_i$ and the \emph{reservoir} $\Omega_i$.

Each of these graphs are thought of as metric graphs, whose edges have lengths 
equal to the length of the words that label them.  The {\em mass} of a metric 
graph $\Gamma$ is its total length, denoted by $m(\Gamma)$. The {\em type} of a 
graph refers to the collection of edge labels (which are reduced words in $F_k$) 
associated to each edge. A \emph{distribution} on a certain set of types of graphs 
is a map that assigns a non-negative number to each type; it is \emph{integral} if 
it assigns an integer to each type. We use this terminology without comment in the sequel.

The following properties will remain true at every stage of our construction.  
The branch vertices of $\Lambda_i$ (those of valence greater than two) have valence four.  
The length of each edge will always be at least $\lambda$.  
We will be careful to ensure that any branch vertex in the interior of the glued 
part $\Gamma_i$ is \emph{good} in the sense of Section \ref{section:spines}.  
Vertices in the intersection of the glued and unglued parts, $\Gamma_i\cap\Upsilon_i$, 
will always be branch vertices, and will be such that one adjacent edge is in $\Gamma_i$, 
and the remaining three adjacent edges in $\Upsilon_i$ have distinct labels. 

In particular, we start with $\Lambda_0=\Upsilon_0$ and $\Gamma_0=\varnothing$.  
At the last stage of our construction we will have $\Lambda_8$ constructed from 324 copies 
of $\Lambda_0$ (or 2,916 if $k=2$), which is completely glued up; that is, $\Gamma_8=\Lambda_8$ and $\Upsilon_8=\varnothing$. Finally, the modification in Section~\ref{subsection:coorientation} doubles the mass of $\Lambda_8$ in order to ensure that the {\em co-orientation condition} is satisfied. Taking $Z$ to be the result of this construction and $f$ to be the quotient map $L\to Z$ proves the theorem.

\subsection{Football bubbles}\label{subsection:football_bubbles}

We will regard $k$ and $\lambda$ as constants.  The first step of the construction is to pick some very big constant $N \gg \lambda$ where still $T\gg N$
(we will explain in the sequel how to choose $T$ and $N$ big enough) so that $N$ is odd.

Let $s$ be the remainder when $|r|-3\lambda$ is divided by $3(N+1)\lambda$. 
By pseudorandomness, we may find three subsegments in $r$ of reduced form
$$a_1xa_2,\quad b_1xb_2,\quad c_1xc_2$$
where $a_i,b_i,c_i$ are single edges, such that the labels $a_1,b_1,c_1$ are all distinct and $a_2,b_2,c_2$ are all distinct, and the length of $x$ is $s$. We take three copies of $\Lambda_0$, fix one of the above subsegments in each copy, and glue the parts of these subsegments labeled $x$  together to obtain $\Lambda_1$.  Note that the requirement that the labels $a_i,b_i,c_i$ are distinct ensures that $\Lambda_1$ remains folded.  We summarize this in the following lemma.

\begin{lemma}
After gluing three copies of $\Lambda_0$ along subsegments of length $s$ we obtain $\Lambda_1$, with the property that the length of each edge of the unglued subgraph $\Upsilon_1$ is congruent to $3\lambda$ modulo $3(N+1)\lambda$. The glued subgraph $\Gamma_1$ is a segment of length $s$.
\end{lemma}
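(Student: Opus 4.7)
The plan is to realize $\Lambda_1$ as a common quotient of three disjoint copies of $\Lambda_0$, glued along a single shared subword $x$ of length $s$ that appears in $r$ flanked by three distinct ``left letters'' and three distinct ``right letters''. A single copy of $\Lambda_0$ with three internal gluings does not suffice: the three arcs produced would sum to $|r|-3s$, and requiring each to have length $\equiv\lambda\pmod{(N+1)\lambda}$ would impose an auxiliary divisibility condition on $|r|$ that generally fails; with three disjoint copies, each contributes one unglued arc of length exactly $|r|-s$, and the congruence reduces to the defining property of $s$.

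To produce the gluing data, pick pairwise-distinct letters $a_1,b_1,c_1$ and pairwise-distinct letters $a_2,b_2,c_2$ in $F_k$, which is possible because $2k\ge 4$. Since $s+2\ll T$ (we have $s<2(N+1)\lambda$ and $T\gg N\gg \lambda$), the $(T,\epsilon)$-pseudorandomness of $r$ forces occurrences of reduced words of length $s+2$ in $r$ to be uniformly distributed up to a factor $1\pm O(\epsilon)$, so there exists a reduced word $x$ of length $s$ such that all three of $a_1xa_2$, $b_1xb_2$, $c_1xc_2$ occur as subsegments of $r$. Take three disjoint copies of $\Lambda_0$, mark the relevant subsegment in each, and identify the three middle $x$-segments pointwise, respecting orientations, to obtain $\Lambda_1$; declare the shared segment to be $\Gamma_1$ and its complement to be $\Upsilon_1$.

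The required properties are then immediate. $\Gamma_1$ is by construction a single edge of length $s$; its two endpoints $v_L, v_R$ are vertices of $\Lambda_1$ of valence four, each incident to $\Gamma_1$ together with three unglued edges whose initial letters (namely $a_1,b_1,c_1$ on one side and $a_2,b_2,c_2$ on the other) are distinct, confirming the invariant on $\Gamma_i\cap\Upsilon_i$-vertices from \S\ref{subsection:graphs_and_types}. Each edge of $\Upsilon_1$ has length $|r|-s$, and substituting $s=s'+(N+1)\lambda$ with $s'\equiv|r|-\lambda\pmod{(N+1)\lambda}$ yields $|r|-s\equiv\lambda\pmod{(N+1)\lambda}$, as claimed; this length is at least $\lambda$ because $|r|$ is much larger than $(N+1)\lambda$ (as $|r|\gg T\gg N\lambda$). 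The sole nontrivial input is pseudorandomness in the search for $x$; the rest is bookkeeping.
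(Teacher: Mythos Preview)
Your proof is correct, and you have put your finger on a genuine wrinkle in the paper's construction. The paper builds $\Lambda_1$ from a \emph{single} copy of $\Lambda_0$: it locates three subsegments $a_1xa_2$, $b_1xb_2$, $c_1xc_2$ within the one cycle labeled $r$, glues the three copies of $x$ together, and then simply states the lemma as a summary, offering no separate proof. But as you observe, the three resulting unglued arcs have total length $|r|-3s$, and requiring each to be $\equiv\lambda\pmod{(N+1)\lambda}$ forces $|r|-3s\equiv 3\lambda$; with $s\equiv |r|-\lambda$ this becomes $2|r|\equiv 0\pmod{(N+1)\lambda}$, a constraint on $|r|$ that is nowhere assumed and cannot be arranged by choosing the positions of the three subsegments. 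Your three-copy construction sidesteps this entirely: each copy contributes a single unglued arc of length exactly $|r|-s$, and the congruence $|r|-s\equiv\lambda$ then falls out immediately from the definition of $s$.

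The only cost is bookkeeping: since your $\Lambda_1$ already consumes three copies of $\Lambda_0$, the eventual tally of $216$ (resp.\ $1944$) copies of $r$ in the statement of the Thin Spine Theorem would triple. Nothing downstream depends on the exact constant---only that it is bounded independently of $n$---so this is harmless for the rest of the argument.
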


We next decompose the unglued subgraph $\Upsilon_1$ into disjoint segments of length $3N\lambda$ separated by segments of length $3\lambda$. Call the segments of length $3N\lambda$ {\em long strips} and the segments of length $3\lambda$ \emph{short strips}.   Now further decompose each long strip into alternating segments of length $3\lambda$; we call the odd numbered segments {\em sticky} and the even numbered segments {\em free}.

We will usually denote a long strip by
$$x_1a_2x_3\ldots x_N$$
where the $x_i$ are sticky, the $a_i$ (or $b_i$ etc) are free, and all are of length $3\lambda$.  When we also need to include the neighboring short strips, we will usually extend this notation to
$$a_0x_1a_2x_3\ldots x_Na_{N+1}$$
where $a_0$ and $a_{N+1}$ (or $b_0,b_{N+1}$ etc) denote the neighboring short strips.

\begin{definition}\label{definition:compatible}
Three long strips are {\em compatible} if they (and their adjoining short strips) are of the form
$$a_0 x_1 a_2 x_3 \cdots a_{N+1}, \quad b_0 x_1 b_2 x_3 \cdots b_{N+1}, \quad c_0 x_1 c_2 x_3 \cdots c_{N+1}$$
(i.e.\/ if their sticky segments agree) and if for even $i$ (i.e.\/ for the free segments) the
letters adjacent to each $x_{i+1}$ or $x_{i-1}$ disagree.
\end{definition}

A compatible triple of long strips can be {\em bunched} --- i.e.\/ the sticky segments can be glued
together in threes, creating $(N-1)/2$ {\em football bubbles}, each football consisting of the three
segments $a_i, b_i, c_i$ (for some $i$) arranged as the edges of a theta graph. See Figure~\ref{bubbles}.

\begin{figure}[htpb]
\labellist
\small\hair 2pt
\endlabellist
\centering
\includegraphics[scale=0.7]{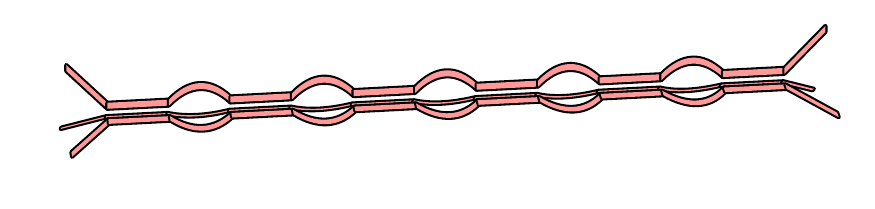}
\caption{Bunching three long strips to create football bubbles}\label{bubbles}
\end{figure}

By pseudorandomness, a proportion of approximately $(1-\epsilon)$ of the long strips in $\Gamma_1$ can be partitioned into compatible triples.  Then each compatible triple can be bunched, creating a {\em reservoir} of footballs (i.e.\ the theta graphs appearing as bubbles) and a {\em remainder}, consisting of the union of the unglued pieces (except for the footballs).  In what follows, we will denote the remainder in $\Upsilon_i$ by $\Delta_i$ and the reservoir by $\Omega_i$.
 
We summarize this observation in the next lemma.  By an \emph{extended long strip}, we mean a long strip, together with half the adjacent short strips.

\begin{lemma}
Suppose that $T\geq 3(N+1)\lambda$, and that $N$ is sufficiently large and $\epsilon$ is sufficiently small.  After bunching compatible triples in the unglued graph $\Upsilon_1$ we obtain the partially glued graph $\Lambda_2$ with the following properties.
\begin{enumerate}
\item The total mass of the unglued subgraph $\Upsilon_2$ satisfies
\begin{equation*}\label{eqn:U2}
\frac{m(\Upsilon_2)}{3m(\Lambda_0)}\leq \frac{1}{2}+O(c^N)\epsilon
\end{equation*}
for a constant $c=c(k,\lambda)$.

\item We can decompose the unglued subgraph $\Upsilon_2$ as a disjoint union $\Delta_2\sqcup\Omega_2$.   The reservoir $\Omega_2$ is a disjoint union of bubbles.
\item The mass of the remainder satisfies
\begin{equation*}\label{edn:D2}
\frac{m(\Delta_2)}{3m(\Lambda_0)}=O(1/N)
\end{equation*}
as long as $\epsilon<O(c^{-N})$.
\item The distribution of the types of bubbles in the reservoir is within $O(\epsilon)$ of a constant distribution (independent of $N$ and $\epsilon$).
\end{enumerate}
\end{lemma}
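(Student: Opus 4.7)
The plan is to first sort the long strips of $\Upsilon_1$ into \emph{sticky-pattern classes} indexed by the $((N+1)/2)$-tuple $(x_1, x_3, \ldots, x_N)$ of sticky-segment labels, and then match strips into compatible triples within each class. Since the total length of a sticky pattern is $(N+1)\lambda/2$, the hypothesis $T \geq (N+1)\lambda$ lets us apply pseudorandomness to conclude that the number of long strips in each class is within a factor $(1 \pm \epsilon)$ of its expected value $n/P$, where $n$ is the total number of long strips and $P = \Theta(c^N)$ is the number of possible sticky patterns (the constant $c=c(k,\lambda)$ being essentially the number of reduced words of length $\lambda$ raised to a fixed power). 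Inside a single class, two strips are compatible if at each junction between a sticky segment and an adjacent free or short segment the first letters of the three free (resp.\ short) parts are all distinct; by pseudorandomness applied again to the free data within a class, a greedy or random matching produces triples covering all but an $O(c^N)\epsilon$ fraction of the strips. Each matched triple is then bunched by identifying the three copies of each sticky segment, producing a chain of $(N-1)/2$ football bubbles; these bubbles make up the reservoir $\Omega_2$, while unmatched strips together with the half-short-strip ends of bunched triples form the remainder $\Delta_2$.

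Properties (1)--(3) then follow from direct mass accounting. A bunched triple has total mass $3(N+1)\lambda$ (three extended long strips), of which $(N+1)\lambda/2$ becomes glued spine and $3(N+1)\lambda/2$ remains unglued as bubbles, an unglued fraction of exactly $1/2$. Combined with the $O(c^N)\epsilon$ unmatched fraction this gives
\[
\frac{m(\Upsilon_2)}{m(L)} \leq \tfrac{1}{2} + O(c^N)\epsilon,
\]
establishing (1). Property (2) is immediate from the construction: $\Omega_2$ is literally a disjoint union of the theta-graph footballs produced in the bunching step. For (3), the half-short-strips contribute at most $m(\Upsilon_1)/(N+1) = O(1/N)\cdot m(L)$ to $\Delta_2$, while the unmatched long strips contribute at most $O(c^N)\epsilon\cdot m(L)$; choosing $\epsilon \leq O(c^{-N})$ with the implicit constant small enough that $c^N \epsilon \leq 1/N$ makes the total $O(1/N)$.

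The main obstacle is property (4): the distribution of types of runs of $35$ consecutive bubbles must converge, with error $O(\epsilon)$, to a distribution independent of $N$ and $\epsilon$. The matching procedure must therefore preserve, rather than destroy, the pseudorandom product structure inherited from $r$. The key point is that a run of $35$ bubbles along a bunched triple is determined by a triple of length-$35\lambda$ windows of free data, one from each of the three strips of the triple; provided the matching within each sticky class is carried out in a way that is symmetric in the free variables (e.g.\ by uniform random matching), the induced distribution on $35$-bubble windows is the pushforward, under the distinct-letter compatibility constraint, of the product distribution on triples of such free windows. Since $T$ may be chosen much larger than $35\lambda$, pseudorandomness gives that this latter distribution is within $O(\epsilon)$ of a product of uniform measures, and hence so is the induced bubble distribution, yielding the claimed constant limit. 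Verifying this bookkeeping carefully — in particular checking that the matching within each small class does not introduce parasitic correlations of size larger than $O(\epsilon)$ between the $35$ positions along a run — is where the argument is most delicate.
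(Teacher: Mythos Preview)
Your proof is essentially sound but takes a more laborious route than the paper's. The paper does not sort by sticky pattern and then match within each class; instead it uses $(T,\epsilon)$-pseudorandomness with $T\ge(N+1)\lambda$ to observe that the \emph{full} extended-long-strip types (sticky and free data together) are within a $(1\pm\epsilon)$ factor of uniform, and then simply \emph{discards} at most an $O(c^N)\epsilon$ fraction of the strips so that every type occurs exactly the same number of times. Once the type distribution is exactly uniform, the existence of a partition into compatible triples reduces to a finite combinatorial fact about the type space (independent of $n$), and a random partition among the valid ones can be chosen. This single ``trim to exact uniformity'' step is what makes items (1)--(3) one-line computations and makes (4) essentially automatic: bunching an exactly uniformly distributed collection at random induces a fixed distribution on any bounded-size subgraph, with no error tracking required.

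By contrast, your approach carries the $\epsilon$-error through a matching step whose analysis you do not carry out (``a greedy or random matching produces triples covering all but an $O(c^N)\epsilon$ fraction'' is asserted, not proved), and your treatment of (4) is, as you yourself note, the most delicate part and is left as a sketch. Two small corrections: a run of 35 bubbles sits inside a window of length roughly $70\lambda$ in each strip (alternating free and sticky segments), not $35\lambda$; and the run type depends on the intervening sticky labels as well as the free data, so the limit distribution must account for both. None of this is fatal---your approach can be completed---but the paper's discard-to-exact-uniformity trick buys a substantially cleaner argument at no real cost.
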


\begin{proof}
The number of types of extended long strips is $O(c^N)$ for some constant $c=c(k,\lambda)$.  The unglued subgraph $\Upsilon_1$ is still $(T,\epsilon)$-pseudorandom, containing a union of
\[
\frac{m(\Upsilon_1)}{3(N+1)\lambda}
\]
extended long strips.  By pseudorandomness, we may restrict to a subset $\Upsilon'_1\subseteq \Upsilon_1$ of mass at least $(1-O(c^N)\epsilon)m(\Upsilon_1)$ so that the types of extended long strips in $\Upsilon'_1$ are \emph{exactly} uniformly distributed.  (Here we use that $T\geq 3(N+1)\lambda$.)   
We then randomly choose a partition into compatible triples, and perform bunching, to
produce $\Lambda_2$.   

We estimate the mass of the unglued subgraph as follows.
\begin{eqnarray*}
m(\Upsilon_2)&=&m(\Upsilon_1\smallsetminus\Upsilon'_1)+\frac{m(\Upsilon'_1)}{2}\\&\leq& O(c^N)\epsilon m(\Upsilon_1)+\frac{(1-O(c^N)\epsilon)}{2}m(\Upsilon_1)\\
&=&\frac{(1+O(c^N)\epsilon)}{2}m(\Upsilon_1)
\end{eqnarray*}
and (\ref{eqn:U2}) follows immediately.

As described above, the unglued subgraph $\Upsilon_2$ is naturally a disjoint union of the remainder $\Delta_2$ and the reservoir $\Omega_2$.  The remainder consists, by definition, of the union of $\Upsilon_1\smallsetminus\Upsilon'_1$ and the short strips in $\Upsilon'_1$ (after gluing).  Hence
\[
m(\Delta_2)<O(c^N)\epsilon m(\Upsilon_1)+\frac{1-O(c^N)\epsilon}{N+1}m(\Upsilon_1)= \frac{(1+O(c^N)\epsilon)}{N+1}m(\Upsilon_1)
\]
which is $O(1/N)$ as long as $\epsilon<O(c^{-N})$.

Bunching uniformly distributed long strips at random induces a fixed distribution  on subgraphs of bounded size. This justifies the final assertion.
\end{proof}

\subsection{Super-compatible long strips}\label{subsection:supercompatible}

At this point, we have bunched all but $\epsilon$ of the long strips into triples.    
In particular, for sufficiently small $\epsilon$, the number of bunched triples is far 
larger than the number of unbunched long strips.  We will now argue that we may, in fact, 
adjust the construction so that {\em every} long strip is bunched.  The advantage of this
is that after this step, the unglued part consists entirely of the reservoir (a union of 
football bubbles) and a remainder consisting of a trivalent graph in which {\em every} edge
has length {\em exactly} $3\lambda$. The key to this operation is the 
idea a \emph{super-compatible} 4-tuple of long segments.

\begin{definition}\label{definition:super-compatible}
Four long strips are {\em super-compatible} if they are of the form
$$a_0 x_1 a_2 x_3 \cdots a_{N+1}, \quad b_0 x_1 b_2 x_3 \cdots b_{N+1}, 
\quad c_0 x_1 c_2 x_3 \cdots c_{N+1}, \quad d_0 x_1 d_2 x_3 \cdots d_{N+1}$$
(i.e.\/ if their sticky segments agree) and if for even $i$ (i.e.\/ for the free segments) 
the initial and terminal letters of $a_i,b_i,c_i,d_i$ disagree.  Alternatively, such a 4-tuple 
is super-compatible if every sub-triple is compatible.
\end{definition}

\begin{remark}
Notice that the existence of super-compatible 4-tuples depends on rank $k\ge 3$. An 
alternative method to eliminate unbunched long strips in rank 2 is given in \S~\ref{rank_2_subsection}.
\end{remark}

\begin{lemma}
Let $\Lambda_2$ be as above. As long as $k\geq 3$ and $\epsilon$ is sufficiently small (depending on $N$), one can injectively assign to each long strip $S_0$ in the remainder $\Delta_2$ a bunched triple $(S_1,S_2,S_3)$ in $\Lambda_2$ such that the quartet $(S_0,S_1,S_2,S_3)$ is super-compatible. 
\end{lemma}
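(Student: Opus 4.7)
The plan is a counting-plus-matching argument using Hall's marriage theorem. For each unbunched long strip $S_0\in\Delta_2$, we count the bunched triples in $\Omega_2$ super-compatible with $S_0$ and show there are more than enough to permit an injective assignment. Since super-compatibility forces a common sticky sequence, the bipartite graph decouples over sticky sequences and Hall's condition reduces to a per-sticky-sequence comparison.

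Fix a sticky sequence $\vec x=(x_1,x_3,\ldots,x_N)$, and let $\sigma$ denote the total number of possible sticky sequences. Since $T\geq (N+1)\lambda$, pseudorandomness together with the restriction to $\Upsilon'_1$ (on which extended long strip types are exactly uniformly distributed) give approximately $m(\Upsilon'_1)/(N\lambda\sigma)$ long strips with sticky sequence $\vec x$, and hence roughly one third as many bunched triples. Meanwhile the number of unbunched long strips with sticky sequence $\vec x$ is at most $O(\epsilon)\cdot m(\Upsilon_1)/(N\lambda\sigma)$, again by pseudorandomness.

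A bunched triple $(S_1,S_2,S_3)$ sharing sticky sequence $\vec x$ is super-compatible with $S_0$ precisely when, at each of the $(N-1)/2$ free positions and the two short-strip positions, the three initial letters (and three terminal letters) of $S_1,S_2,S_3$ are pairwise distinct and distinct from the corresponding letter of $S_0$. At each position the initial letters of a random compatible triple form a uniformly chosen $3$-subset of the $2k-1$ admissible letters (those permitted by the reduced-word condition with the adjacent sticky endpoint), so the fraction avoiding the letter of $S_0$ is
\[
\binom{2k-2}{3}\bigg/\binom{2k-1}{3}=\frac{2k-4}{2k-1},
\]
strictly positive precisely because $k\geq 3$. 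Since $T\geq (N+1)\lambda$, pseudorandomness controls the joint distribution of letters across an entire extended long strip, so the $O(N)$ position-wise constraints are essentially independent; multiplying gives a fraction $\alpha=\alpha(k,N,\lambda)>0$ of super-compatible triples among those sharing $\vec x$. Thus $S_0$ admits at least $(\alpha/3)(1-O(\epsilon))\cdot m(\Upsilon_1)/(N\lambda\sigma)$ super-compatible bunched triples, which strictly exceeds the number of unbunched strips of sticky sequence $\vec x$ whenever $\epsilon\ll\alpha$.

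For any $A\subseteq\Delta_2$, decomposing by sticky sequence yields disjoint neighborhoods, and the per-sticky-sequence bound gives $|N(A_{\vec x})|\geq|A_{\vec x}|$ for each $\vec x$; summing yields Hall's condition and the injective assignment. The main obstacle is justifying the approximate independence of the $O(N)$ position constraints well enough to conclude $\alpha>0$, which is precisely where the scale $T\geq (N+1)\lambda$ and the uniformization step producing $\Upsilon'_1$ are essential; note that the per-position fraction $(2k-4)/(2k-1)$ vanishes at $k=2$, explaining the hypothesis $k\geq 3$ and the need for a separate argument in rank two.
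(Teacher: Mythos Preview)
Your argument is correct and rests on the same counting comparison as the paper's: for every unbunched strip $S_0$ there are many more super-compatible bunched triples than there are unbunched strips, so an injective assignment exists. The executions differ in two respects. First, where you compute the fraction of super-compatible bunched triples by multiplying per-position factors $(2k-4)/(2k-1)$ across the $O(N)$ free-segment endpoints, the paper simply observes that for $k\geq 3$ at least \emph{one} super-compatible type of triple exists, and since the $O(D^N)$ types are nearly equidistributed this already yields a lower bound of $1/O(D^N)-O(\epsilon)$ on the super-compatible fraction; this bypasses any discussion of approximate independence across positions. Second, where you invoke Hall's theorem after decomposing by sticky sequence, the paper just assigns greedily: since the number of super-compatible triples for any single $S_0$ already exceeds the \emph{total} number of unbunched strips once $\epsilon<1/O(D^N)$, greedy assignment cannot fail. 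Your version is more quantitative (it identifies the constant and makes the role of $k\geq 3$ explicit via the vanishing of $(2k-4)/(2k-1)$ at $k=2$), while the paper's is shorter and avoids the mild subtlety of justifying the product formula for $\alpha$ under the random-bunching procedure.
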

\begin{proof}
Let $S_0$ be an unglued long strip.  As long as $k\geq 3$, it is clear that there is at least one type of bunched triple $(S_1,S_2,S_3)$ such that $(S_0,S_1,S_2,S_3)$ is super-compatible. The total number of types of bunched triples is $O(D^N)$ for some $D=D(k,\lambda)$.   Therefore, the proportion of bunched triples that are super-compatible with $S_0$ is at least $1/O(D^N)-O(\epsilon)$.  On the other hand, the proportion of long strips that are unbunched is $O(\epsilon)$.  Therefore, we can choose a bunched triple for every unbunched long strip as long as $O(\epsilon)<1/O(D^N)$.
\end{proof}

By the previous lemma, we may assign to each unglued long strip $S_0$ in the remainder 
$\Delta_2$ a glued triple $(S_1,S_2,S_3)$ such that $(S_0,S_1,S_2,S_3)$ is super-compatible.  
We may re-bunch these into the four possible compatible triples that are subsets of our 
super-compatible 4-tuple, viz:
\[
(S_0,S_1,S_2),~(S_0,S_1,S_3),~(S_0,S_2,S_3),~(S_1,S_2,S_3)
\]
The result of performing this operation on every long strip in the remainder $\Delta_2$ yields the new partially glued graph $\Lambda_3$.  

\begin{lemma}
Take 3 copies of $\Lambda_2$ as above, and suppose that $N$ is sufficiently large and $\epsilon$ is sufficiently small.  After choosing super-compatible triples and re-bunching so that every long strip is bunched, we produce the partially glued graph $\Lambda_3$ with the following properties.
\begin{enumerate}
\item The partially glued graph $\Lambda_3$ consists of a glued subgraph $\Gamma_3$, a reservoir $\Omega_3$, and a remainder $\Delta_3$.
\item The reservoir $\Omega_3$ consists of football bubbles and its mass is bounded below by
\begin{equation*}\label{eqn:O3}
\frac{m(\Omega_3)}{m(\Lambda_0)}\geq O(1)
\end{equation*}
\item The remainder $\Delta_3$ is a trivalent graph with each edge of length $3\lambda$, and its mass is bounded above by
\begin{equation*}\label{eqn:D3}
\frac{m(\Delta_3)}{m(\Lambda_0)}\leq 1/O(N)
\end{equation*}
\item Let $\rho$ be the uniform distribution on types of bubbles.  Then for any type $B$ of bubble, the proportion of bubbles in the reservoir of type $B$ is within $O(c^N)\epsilon$ of $\rho$.
\end{enumerate}
\end{lemma}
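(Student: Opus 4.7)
The plan is to invoke the preceding lemma and then exploit the combinatorics of super-compatible 4-tuples. If $(S_0,S_1,S_2,S_3)$ is super-compatible, then by definition all four of its 3-element sub-triples are compatible. Taking 3 copies of $\Lambda_2$ yields three copies of each $S_i$, i.e.\ twelve long strips per 4-tuple, which is precisely the multiplicity required to redistribute them across the four sub-triples
\[
(S_1,S_2,S_3),\ (S_0,S_2,S_3),\ (S_0,S_1,S_3),\ (S_0,S_1,S_2),
\]
with each strip appearing in exactly three sub-triples and each of its three copies used once. Re-bunching according to this partition produces $\Lambda_3$ together with its decomposition in (1).

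For property (3) I would observe that after re-bunching, every long strip in every copy is glued, so $\Upsilon_3=\Omega_3\sqcup\Delta_3$ where $\Delta_3$ consists entirely of short strips. At each branch vertex of $\Delta_3$, the sticky segment that has just been glued occupies one incident edge (lying in $\Gamma_3$), while the three short strips from the three long strips meeting at that vertex occupy the other three; hence $\Delta_3$ is trivalent with every edge of length exactly $\lambda$. Since the short strips comprise a $1/(N+1)$ fraction of $m(\Upsilon_1)$, and $m(L_3)=3m(L_2)$, one obtains $m(\Delta_3)/m(L_3)=1/O(N)$.

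For property (2), the three copies of $\Omega_2$ contribute at least $3m(\Omega_2)$ to $\Omega_3$, and re-bunching a super-compatible 4-tuple in fact \emph{increases} the bubble count, producing $4\cdot(N-1)/2$ bubbles per 4-tuple in place of the $3\cdot(N-1)/2$ coming from the three copies of the original single bunched triple. Combining with the lower bound on $m(\Omega_2)/m(L_2)$ implicit in the previous lemma (since $m(\Upsilon_2)\approx m(L_2)/2$ and $m(\Delta_2)/m(L_2)=O(1/N)$) yields a positive constant lower bound on $m(\Omega_3)/m(L_3)$.

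The delicate step, and the one I expect to be the main obstacle, is property (4). Pseudorandomness of $r$ at scale $T\gg N\lambda$ implies that the distribution of bunched-triple types across $\Omega_2$ is $O(\epsilon)$-close to uniform on the $O(c^N)$-many possible types, and passing from uniformity on triples of length $\approx N\lambda$ to uniformity on individual bubbles of length $\lambda$ costs a factor $O(c^N)$, yielding the claimed $O(c^N)\epsilon$ deviation before re-bunching. The re-bunching step perturbs this distribution only slightly, because the set of super-compatible 4-tuples has mass $O(\epsilon)m(L)$ (so the re-bunched bubbles form an $O(\epsilon)$ fraction of the total) and because, crucially, the four sub-triples of a super-compatible 4-tuple contribute bubbles \emph{symmetrically} across the four strips so that no systematic bias toward particular bubble types is introduced. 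The hardest part will be making this symmetry argument quantitative while tracking how the super-compatibility constraint interacts with the $O(c^N)\epsilon$ pseudorandom error at scale $T$.
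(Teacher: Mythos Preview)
Your treatment of (1)--(3) is correct and matches the paper's argument: after re-bunching, every long strip is bunched, so $\Delta_3$ consists exactly of the short strips (a $1/(N+1)$ fraction of $\Upsilon_1$), and $\Omega_3$ consists of the bubbles (an $(N-1)/(2N+2)$ fraction). The paper computes these ratios directly from $\Upsilon_1$ rather than via $\Omega_2$, but the content is the same.

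For (4), however, you are making the argument harder than it needs to be, and the ``symmetry'' idea you flag as the main obstacle is a red herring. The paper's argument is purely quantitative: the number of long strips touched by re-bunching is controlled by the number of \emph{unbunched} long strips in $\Lambda_2$, which is an $O(c^N)\epsilon$ fraction of all long strips (this is the mass bound on $\Upsilon_1\smallsetminus\Upsilon_1'$ from the preceding step). Each re-bunching destroys the bubbles of one bunched triple and creates the bubbles of four new ones, so the total number of bubbles created or destroyed is still an $O(c^N)\epsilon$ fraction of all bubbles. For any fixed type $B$, the proportion of bubbles of type $B$ can therefore move by at most $O(c^N)\epsilon$, \emph{regardless of how biased the new bubbles are}. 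There is no need to argue that the four sub-triples contribute symmetrically, nor to track how the super-compatibility constraint interacts with pseudorandomness: even if every re-bunched bubble landed on a single type, the deviation would still be within the required bound. Your sentence ``passing from uniformity on triples \dots\ costs a factor $O(c^N)$'' is also unnecessary; the starting distribution on bubbles is already within $O(c^N)\epsilon$ of uniform by the previous lemma, and the perturbation bound above finishes the job.
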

\begin{proof}
Consider $\Upsilon_1$, the unglued subgraph of $\Lambda_1$.  Then $\Upsilon_1$ is the union of three arcs, of total mass $3C(N+1)\lambda+9\lambda$, where $C$ is the number of long strips in $\Upsilon_1$.  Recall that $\Lambda_3$ is constructed from three copies of $\Lambda_1$, and that for each long strip, one short strip goes into the remainder, and $(N-1)/2$ go into the reservoir.  Therefore, we have that
\[
\frac{m(\Omega_3)}{3m(\Upsilon_1)-9\lambda}=\frac{N-1}{2N+2}
\]
and 
\[
\frac{m(\Delta_3)-9\lambda}{3m(\Upsilon_1)-9\lambda}=\frac{1}{N+1}
\]
Since $m(\Upsilon_1)\geq 3m(\Lambda_0)-O(N)$, the estimates (\ref{eqn:O3}) and (\ref{eqn:D3}) follow.

Finally, we need to check that the re-gluing only has a small effect on the distribution of bubble types.  Recall that the unglued long strips in $\Upsilon_2$ are precisely the images of the unglued long strips in $\Upsilon_1\smallsetminus\Upsilon'_1$, which is of mass at most $O(c^N)\epsilon m(\Upsilon_1)$.

Taking three copies of each of these and three copies of a super-compatible triple, re-bunching produces four new bunched triples.  In particular, for each three unbunched long strips, we destroy $(N-1)/2$ bubbles and replace them with $2(N-1)$ new bubbles of different types.  The proportion of unglued long strips was at most $O(c^N)\epsilon$.  It follows that the proportional distribution of each bubble type was altered by at most $O(c^N)\epsilon$, so the final estimate follows.
\end{proof}

\subsection{Inner and outer reservoirs and slack}

As their name indicates, the bubbles in the reservoir will be held in reserve until a later
stage of the construction to glue up the remainder. Some intermediate operations on the
remainder have ``boundary effects'', which might disturb the neighboring bubbles in the reservoir
in a predictable way. So it is important to insulate the remainder with a collar of bubbles
which we do not disturb accidentally in subsequent operations.

Fix a constant $0<\theta_n < 1$ and divide each long strip in $\Lambda_n$ into two parts:
and {\em inner reservoir}, consisting of an innermost sequence of consecutive bubbles of length
$1-\theta_n$ times the length of the long strip, 
and an {\em outer reservoir}, consisting of two outer sequences of consecutive
bubbles of length $\theta_n/2$. The number $\theta_n$ is called the {\em slack}.
Boundary effects associated to each step that we perform will use up bubbles from 
the outer reservoir and at most halve the slack. Since the number of steps we perform
is uniformly bounded, it follows that --- provided $N$ is sufficiently large --- 
even at the end of the construction we will still have a significant outer reservoir 
with which to work.

\subsection{Adjusting the distribution}\label{subsection:distribution}

After collecting long strips into compatible triples, the collection of football bubbles in the
reservoir is ``almost equidistributed'', in the sense that the mass of any two different types of
football is almost equal (up to an additive error of order $\epsilon$). However, it is useful
to be able to adjust the pattern of gluing in order to make the distribution of football bubbles
conform to some other specified distribution (again, up to an additive error of order $\epsilon$).

This operation has an unpredictable effect on the remainder, transforming it into some new 3-valent
graph (of some possibly very different combinatorial type); however, it preserves the essential
features of the remainder that are known to hold at this stage of the construction: every edge
of the remainder (after the operation) has length exactly $3\lambda$; and the total mass of the
remainder before and after the operation is unchanged (so that it is still very small compared to
the mass of any given football type).

Let $\mu$ be a probability measure on the set of all football types, with full support  --- i.e.\/ 
so that $\mu$ is strictly positive on every football type.  (In the sequel, $\mu$ will be the \emph{cube distribution} described below, but that is not important at this stage.) Suppose we have three long strips of the
form 
$$a_0x_1a_2x_3\cdots a_{N+1}, \quad b_0x_1b_2x_3\cdots b_{N+1}, \quad c_0x_1c_2x_3\cdots c_{N+1}$$
so that the result of the gluing produces $(N+1)/2$ bunches each with the label $x_i$ (for $i$ odd), and
$(N-1)/2$ football bubbles each with the label $(a_i,b_i,c_i)$ (for $i$ even). We think of the $(a_i,b_i,c_i)$ as {\em unordered} triples --- i.e.\/ we only think of the underlying football as an abstract graph with edge labels up to isomorphism. A given sequence of labels $x_i$ and (unordered!) football types $(a_i,b_i,c_i)$ might arise from three long strips $s,t,u$ in $6^{(N+3)/2}$ ways, since there are 6 ways to order each triple $a_i,b_i,c_i$. 

Let $\rho$ denote the uniform probability measure on football types, and let $\mu'$ be chosen to be a multiple of $\mu$ such that $\rho>\mu'$ for all types. Fix $\theta_4>0$ such that $\min \mu'/\rho>\theta_4$.  As the notation hints, $\theta_4$ will turn out to be the slack in the partially glued graph $\Lambda_4$, and we accordingly partition each long strip of $\Lambda_3$ into an inner reservoir, of proportional length $(1-\theta_4)$, and an outer reservoir consisting of two strips of proportional length $\theta_4/2$.

We put all possible triples of long strips labeled as above into a bucket.   Next, color each even index $i$ in $[N\theta_4/2,N(1-\theta_4/2)]$ (i.e.\ those contained in the inner reservoir) black with probability
\[
\frac{(\rho-\mu'(i))}{(1-\theta_4)\rho}
\]
and color all the remaining indices white, where $\mu'(i)$ is short for the $\mu'$ measure of the football type $(a_i,b_i,c_i)$ 
(note that our choice of $\theta_4$ guarantees that the assigned probabilities are never 
greater than 1).

Now pull apart all the triples of long strips, and match them into new triples $s,t,u$ according to the 
following rule: if a given index $i$ is white, the corresponding labels $s_i,t_i,u_i$ should all
be {\em different}, and equal to $a_i,b_i,c_i$ (in some order); if a given index $i$ is black, the
corresponding labels $s_i,t_i,u_i$ should all be the {\em same}, and equal to exactly
one of $a_i,b_i,c_i$. Then we can glue up $s,t,u$ to produce footballs $(a_i,b_i,c_i)$ 
exactly for the white labels, and treat the black labels as part of the neighboring sticky segments,
so that they are entirely glued up.

We do this operation for each bucket (i.e.\ for each collection of triples with a given sequence
of sticky types $x_i$ and football types $(a_i,b_i,c_i)$). The net effect is to eliminate  a fraction of approximately $(\rho-\mu'(i))/\rho$ of the footballs with label $i$; thus, at the end of this operation, the distribution of footballs is proportional to $\mu$, with error of order $\epsilon$.

Although this adjustment operation can achieve any desired distribution $\mu$,
in practice we will set $\mu$ equal to the \emph{cube distribution}, to be described in the sequel.
In any case, for a fixed choice of distribution $\mu$, the slack $\theta_4$ only depends on $k$ and $\lambda$, and therefore can be treated as a constant.

We summarize this in the following lemma.

\begin{lemma}\label{lem: Adjusting the distribution}
Let $\mu$ be a probability distribution on the set of types of football bubbles and let $N$ be sufficiently large.  The adjustment described above transforms $\Lambda_3$ into a new partially glued graph $\Lambda_4$ with the following properties.
\begin{enumerate}
\item The mass of the reservoir $\Omega_4$ is bounded below by a constant depending only on $k,\lambda$ and $\mu$.
\[
\frac{m(\Omega_4)}{m(\Lambda_0)}\geq O(1)
\]
\item The mass of the remainder $\Delta_4$ is bounded above by
\[
\frac{m(\Delta_4)}{9m(\Lambda_0)}\leq 1/N
\]
\item The distribution of football types in the reservoir is proportional to $\mu$, with error $O(c^N)\epsilon$.
\item The slack $\theta_4$ is a constant.
\end{enumerate}
\end{lemma}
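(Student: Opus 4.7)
The plan is to verify each of the four claims by tracking the effect of the coloring-and-rematching operation on $\Lambda_3$, carried out bucket by bucket, where a bucket is a maximal family of triples of long strips sharing a common sequence of sticky labels $x_i$ (for $i$ odd) and unordered football labels $(a_i,b_i,c_i)$ (for $i$ even). The first step is to confirm that the coloring probabilities $(\rho-\mu'(i))/((1-\theta_4)\rho)$ actually lie in $[0,1]$: this is exactly guaranteed by the choice of $\mu'$ as a positive multiple of $\mu$ with $\rho > \mu'$ everywhere, and of $\theta_4$ with $\min(\mu'/\rho) > \theta_4$. Note also that $\theta_4$ depends only on $k$, $\lambda$ and $\mu$, giving claim (4) immediately.

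Next I would establish the mass claims. For (2), observe that the rematching does not create or destroy short strips in the remainder (it only reshuffles which white index labels in the inner reservoir produce footballs and which black indices become part of the glued subgraph); hence $\Delta_4 = \Delta_3$ as a set of edges, and the bound $m(\Delta_4)/m(L) \leq 1/N$ follows from the bound on $m(\Delta_3)$ in the previous lemma by taking $N$ large. For (1), the operation only absorbs bubbles at black indices, all of which lie in the inner portion of proportional length $1-\theta_4$. In each bucket the expected proportion of bubbles of type $i$ in the inner reservoir absorbed is $(\rho(i)-\mu'(i))/\rho(i) \leq 1-\theta_4$, so the surviving reservoir has mass at least $\theta_4 \cdot m(\Omega_3)$ minus lower-order pseudorandom fluctuations. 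Since $m(\Omega_3)/m(L) \geq O(1)$, this gives (1).

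For the distribution claim (3), I would argue bucket by bucket. Inside a bucket, the number of white labels at each index $i$ is a sum of independent Bernoullis with mean $\mu'(i)/\rho(i) \cdot (1-\theta_4)$ per label in the inner reservoir, together with an uncolored (white) contribution from the outer reservoir. A direct computation shows that the resulting expected number of surviving footballs of type $(a_i,b_i,c_i)$ is proportional to $\mu(i)$, with constant of proportionality independent of the bucket. Chernoff gives exponential concentration around these means inside each bucket. To aggregate across buckets, note that the number of bucket types is $O(D^N)$ and the pseudorandom deviation of bucket sizes from their expected values is $O(c^N)\epsilon$; combining these with the within-bucket concentration (which is better than $\epsilon$ since bucket sizes grow with $m(L)$) gives a total deviation from the $\mu$-proportional distribution of at most $O(c^N)\epsilon$.

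The main obstacle will be the aggregation across buckets for claim (3): one must check that when $\mu$ ranges over the fixed target distribution (which will later be the cube distribution), the constant hidden in $O(c^N)\epsilon$ does not blow up with $N$ at a rate incompatible with the hypothesis $\epsilon \ll 1/T$ of the Thin Spine Theorem. This is handled by noting that, for a distribution $\mu$ fixed before $N$ and $\epsilon$ are chosen, both $\min(\mu'/\rho)$ and the combinatorial factor controlling how many buckets must be summed are functions only of $k$, $\lambda$, and $\mu$, and then choosing $N$ (hence $T$) large enough that the error term is absorbed into the subsequent steps of the construction.
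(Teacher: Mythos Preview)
Your proposal is correct and follows essentially the same approach as the paper's proof. Both arguments verify the four claims by tracking how the random black/white coloring and rematching affects mass and type counts; the key computation---that keeping a $\mu'(i)/\rho$ fraction of each type yields a distribution within $O(c^N)\epsilon$ of $\mu'$---is the same in both. Two minor remarks: first, your statement that $\Delta_4=\Delta_3$ ``as a set of edges'' is correct for the mass claim, though the paper is careful to note that the \emph{combinatorial type} of the remainder may change completely under rematching (even while the underlying short-strip segments, hence the mass, are preserved). Second, you are more explicit than the paper about invoking Chernoff for within-bucket concentration and then aggregating over buckets; the paper simply works with expectations and absorbs the concentration step into the word ``approximately'', which is legitimate since bucket sizes grow with $m(L)$.
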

\begin{proof}
As noted above, this operation may completely change the combinatorial type of the remainder, but leaves invariant its total mass, and the fact that it is a 3-valent graph with edges of mass $3\lambda$.  In particular,
\[
\frac{m(\Delta_4)}{9m(\Lambda_0)}\approx \frac{1}{N+1}
\]
and so $m(\Delta_4)/9m(\Lambda_0)\leq 1/N$ for sufficiently large $N$.  This proves item 2.

We lose a fraction of the reservoir---those indices colored black.  An index $i$ between $N\theta_4/2$ and $N(1-\theta_4/2)$ is colored black with probability $(\rho-\mu'(i))/\rho$.  Therefore, the proportion of bubbles colored white is bounded below by $\min_{i}\mu'(i)/\rho$, and so the mass of the reservoir is bounded below by
\[
\frac{m(\Omega_4)}{3m(\Upsilon_1)-9\lambda}\geq \left(\min_{i}\mu'(i)/\rho\right)\frac{N-1}{2N+2}
\]
Since $\mu'$ is a distribution on types of bubbles, depending only on $\lambda$ and $k$, the first item holds as long as $N$ is sufficiently large.

We next estimate the distributions of the types of bubbles.  Before adjustment, the proportion of each type of bubble in the reservoir $\Omega_3$ was within $O(c^N)\epsilon$ of the uniform distribution.  These can be taken to be uniformly distributed between the inner and outer reservoirs.  Therefore, after adjustment, the new distribution $\nu$ on bubble types satisfies
\[
|\nu-\mu'|\leq (\mu'/\rho)O(c^N)\epsilon
\]
and so, as before, since $\mu'(i)$ is bounded above in terms of $\lambda$ and $k$, the third assertion follows.

The final assertion about the slack is immediate from the construction.
\end{proof}

\subsection{Tearing up the remainder}\label{subsection:tear}

At this stage the remainder consists of a 3-valent graph in which every edge has length exactly $3\lambda$. The total mass of the remainder is very small compared to the mass of the reservoir, but it is large compared to the size of a single long strip. Furthermore, there is no {\it a priori} bound on the combinatorial complexity of a component of the remainder. 

We explain how to modify the gluing by a certain local move called a {\em tear}\footnote{``tear'' in the sense of: ``There were tears in her big brown overcoat''}, which (inductively) reduces the combinatorial complexity of the remainder (which {\it a priori} is arbitrarily complicated) until it consists of a disjoint collection of simple pieces. These pieces come in three kinds:
\begin{enumerate}
\item{football bubbles;}
\item{bizenes: these are graphs with 6 edges and 4 vertices, obtained from a square
by doubling two (non-adjacent) edges; and}
\item{bicrowns: these are complete bipartite graphs $K_{3,3}$.}
\end{enumerate}
The bubbles, bizenes and bicrowns all have edges of length exactly $3\lambda$. They are depicted in
Figure~\ref{bbb}. Note that bizenes doubly cover footballs and bicrowns triply cover footballs.  If the labels on a bizene or bicrown happen to be pulled back from the labels on a football bubble via the covering map, then we say that the bizene or bicrown is \emph{of covering type}.

\begin{figure}[htpb]
\labellist
\small\hair 2pt
\endlabellist
\centering
\includegraphics[scale=0.5]{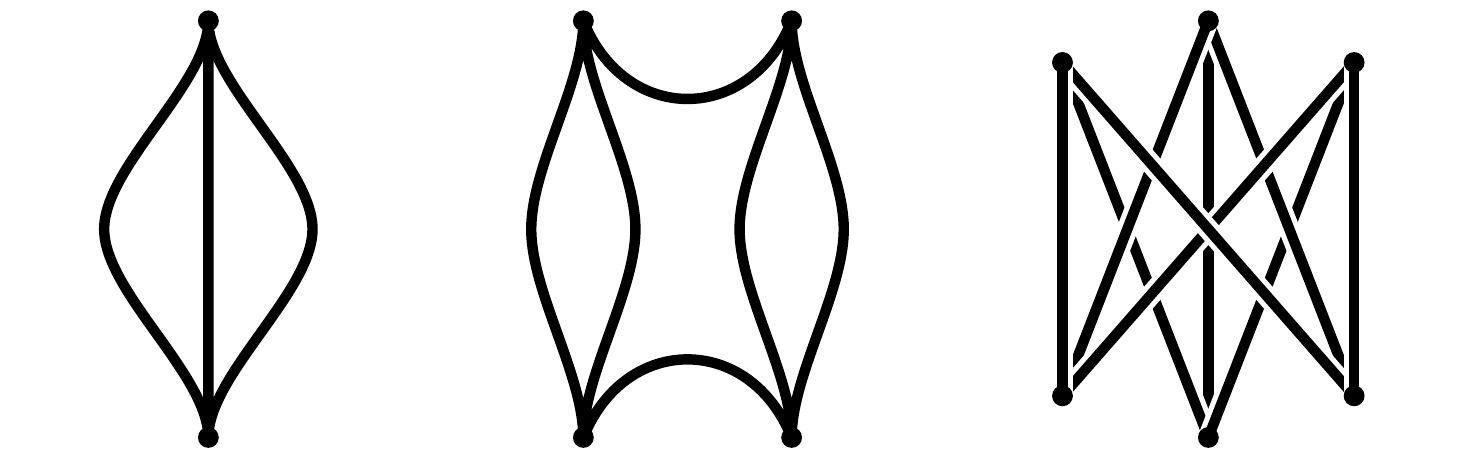}
\caption{Football bubbles, bizenes, and bicrowns}\label{bbb}
\end{figure}

\begin{figure}[htpb]
\labellist
\small\hair 2pt
\endlabellist
\centering
\includegraphics[scale=0.5]{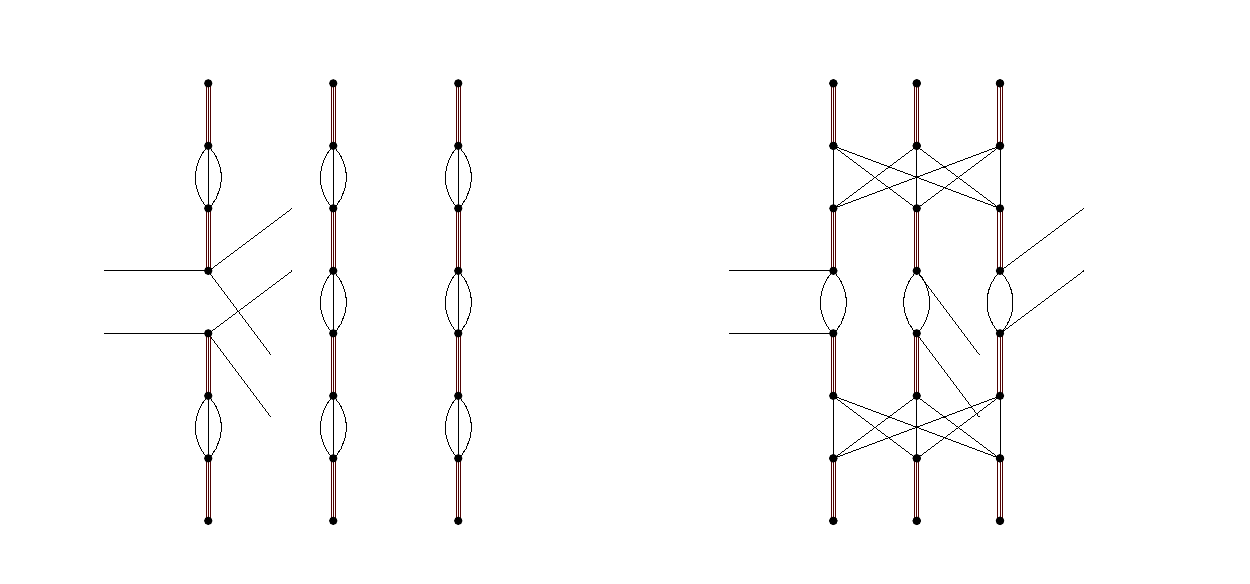}
\caption{Tearing at $p$ and $p'$. The ``before'' picture is on the left, and the ``after'' picture on the
right. The graph $\Lambda_4\cup\Lambda'_4$ is transformed by being disconnected
at $p$ and $p'$ and having three new bigons inserted which connect the new six 1-valent ends 
of $\Lambda_4 \cup\Lambda'_4 - p - p'$ in pairs.  The move also destroys 8 footballs, and creates two bicrowns.}\label{tearing}
\end{figure}

We now describe the operation of {\em tearing}.  Take two copies of $\Lambda_4$, denoted by $\Lambda_4$ and $\Lambda'_4$ (with vertices, edges and subgraphs of $\Lambda'_4$ denoted with primes in the obvious manner) and let $p$ and $p'$ be branch vertices of $\Delta_4$ and $\Delta'_4$ respectively.  These vertices are the ends of disjoint sequences of alternating bunched triples and football bubbles.
The tearing operation uses up two (appropriately labeled) sequences of alternating bunched
triples and football bubbles, each of length 3. The precise definition of this operation is best
given by example, and is illustrated in Figure~\ref{tearing}.

On the left of the figure we have the vertices $p$ and $p'$ of $\Lambda_4$ and $\Lambda'_4$, 
together with two strings of $3$ bubbles. These strings are pulled apart and reglued according to the combinatorics indicated in the figure. Thus, the labels on the bunched triples at each horizontal level should all agree, and the labels on the footballs should
be such that the result of the gluing is still folded. The existence of strings of football
bubbles with these properties is guaranteed by pseudorandomness and the definition of the long
strips. 

Thus the operation of tearing uses up 8 footballs (as in the figure), and it has several effects on the remainder. 
First, $\Lambda_4\cup\Lambda'_4$ is pulled apart at $p$ and $p'$, producing six new vertices $p_1,p_2,p_3$ 
and $p_1',p_2',p_3'$, and adding two new edges $x_i$ and $x_i'$ (from the footballs 
separating the adjacent bunched triples) joining $p_i$ to $p_i'$. Second, two new bicrowns are
created, assembled from the pieces of three identically labeled footballs. 
Third, the slack at $p$ and $p'$ is reduced to approximately half of its previous value. If the strings of three football segments are taken from the inner half of the outer reservoir, it will reduce the slack at the vertices of $\Lambda_4\cup\Lambda'_4$ at the end of these strips; but the size of the slack at these vertices will stay large.

\begin{lemma}\label{lem: Tearing}
Suppose that $N$ is sufficiently large, $\epsilon$ is sufficiently small and the partially glued graph $\Lambda_4$ is as above.  By applying tearing operations to 2 copies of $\Lambda_4$ we may build a partially glued graph $\Lambda_5$ with the following properties.
\begin{enumerate}
\item The mass of the reservoir $\Omega_5$ is bounded below by a constant depending only on $k,\lambda$ and $\mu$.
\[
\frac{m(\Omega_5)}{m(\Lambda_0)}\geq O(1)
\]
\item The remainder $\Delta_5$ is a disjoint union of bizenes, and bicrowns of covering type, with mass bounded above by
\[
\frac{m(\Delta_5)}{m(\Lambda_0)}\leq O(1/N)
\]
\item The distribution of football bubbles of each type is within $O(1/N)$ of the distribution $\mu'$ (proportional to $\mu$).
\item The slack satisfies $\theta_5\geq \theta_4/3$.
\end{enumerate}
\end{lemma}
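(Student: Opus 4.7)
The overall plan is to apply the tearing operation inductively to pairs of vertices of the remainder, until every connected component of what remains is either a bizene or a bicrown of covering type. Starting from two copies of $\Lambda_4$, the remainder $\Delta_4 \sqcup \Delta_4$ is a $3$-valent graph with every edge of length exactly $\lambda$ and total mass $O(m(L)/N)$. The doubling is needed because the pieces we ultimately want (bizenes with $4$ vertices, bicrowns with $6$ vertices) each have an even number of vertices, and more importantly because the tear move is parametrized by an ordered pair $(p,p')$ and pairs up vertices of $\Delta$; doubling ensures there is no parity obstruction in matching every vertex into such a pair. First I would fix a choice of pairing of the vertices of the doubled remainder into pairs $(p,p')$ such that each pair can be consistently torn.

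Next I would show that the footballs needed to execute each tear are available in the reservoir $\Omega_4$. Each tear at $(p,p')$ consumes $8$ footballs whose labels are constrained by the local edge labels at $p,p'$ (so that the re-gluing produces an immersion, creating in the process two bicrowns of covering type). By Lemma \ref{lem: Adjusting the distribution}, the distribution of football types in $\Omega_4$ is proportional to $\mu$ up to additive error $O(c^N)\epsilon$, so for sufficiently small $\epsilon$ every needed label class is represented with a positive density of footballs. Since $m(\Delta_4)/m(L) = O(1/N)$ and each tear consumes $O(1)$ footballs, the total number of tears is $O(m(L)/(N\lambda))$, and the total mass of consumed footballs is $O(m(L)/N) \ll m(\Omega_4)$. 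Thus we can perform all tears while preserving $m(\Omega_5)/m(L) \geq O(1)$ and perturbing the distribution of football types by only $O(1/N)$ (in addition to the preexisting $O(c^N)\epsilon$ error).

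The combinatorial heart of the argument, and what I expect to be the main obstacle, is to show that tearing can be organized so that the final remainder is a disjoint union of bizenes and bicrowns of covering type. I would proceed by a complexity reduction: to any $3$-valent graph $\Gamma$ assign the complexity $(|V(\Gamma)|, \beta_1(\Gamma))$ ordered lexicographically, and show that any $\Gamma$ not already a disjoint union of the allowed simple pieces admits a pair $(p,p')$ such that tearing at $(p,p')$ strictly decreases the complexity and produces only pieces of the allowed kind locally. The delicate point is that a tear at $(p,p')$ disconnects $\Gamma$ at those two vertices but reconnects the six loose ends into three bigons, so one must verify (by case analysis on the local structure) that repeated tearing of any $3$-valent graph terminates in the promised list of building blocks; the covering-type condition on the bicrowns created during tearing follows from the fact that the three footballs consumed in a bicrown-producing tear have identical label types, which can be arranged using the abundance of each football type in $\Omega_4$.

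Finally, to verify the four conclusions of the lemma I would assemble these observations: (1) follows from the uniform bound on footballs consumed per tear together with the density lower bound from Lemma \ref{lem: Adjusting the distribution}; (2) follows from the complexity reduction together with the observation that bizenes and bicrowns both have all edges of length $\lambda$, and the mass estimate is preserved up to $O(1/N)$ because tearing redistributes edges without significantly changing the total; (3) follows because only an $O(1/N)$ fraction of footballs are consumed or converted into bicrowns, so the bubble distribution in $\Omega_5$ differs from $\mu'$ by $O(1/N)$; (4) follows because each tear uses footballs only from the inner half of the outer reservoir and halves the slack at the touched vertices, but since each vertex of $\Delta_4$ is touched in at most one tear (by the choice of pairing) and the slack consumption is at most a factor of $2$ per application, we retain $\theta_5 \geq \theta_4/3$ comfortably.
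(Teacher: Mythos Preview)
Your proposal has a genuine gap: you have missed the central idea behind taking two copies of $\Lambda_4$, and as a result your ``complexity reduction'' strategy is both unnecessary and unverified.

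The point of doubling is not a parity consideration. In the paper's argument, one pairs each vertex $p$ of $\Delta_4$ with its \emph{identical copy} $p'$ in the second copy of $\Lambda_4$. Because $p$ and $p'$ have exactly the same local labels (the three unglued edges are labelled $a,b,c$ and the glued edge is labelled $d$ at both), a single tear at $(p,p')$ can be set up so that each edge $e_x$ incident at $p$ is joined by a bigon to its twin $e'_x$ incident at $p'$. Doing this once at every vertex of $\Delta_4$ immediately converts the remainder into a disjoint union of bizenes: each original edge $e$ of $\Delta_4$, together with its twin $e'$ and the two bigons inserted at its endpoints, is a bizene with four vertices and six edges. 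No iteration is needed, and no case analysis on the combinatorial type of the trivalent graph is required.

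Your proposal, by contrast, fixes an unspecified pairing and then proposes an inductive complexity reduction whose termination in bizenes and bicrowns you do not actually establish (``one must verify by case analysis\ldots''). This is the hard step you have deferred, and in fact for an arbitrary pairing of vertices there is no reason the process terminates in the desired pieces: the three bigons inserted by a tear connect the six loose ends according to the label structure at $p$ and $p'$, and if these do not match, the resulting graph need not simplify. Moreover, the tear move itself requires the glued-edge label at $p$ and at $p'$ to agree (so that compatible bubbles $B_1,B_2$ exist), which is automatic only when $p'$ is the copy of $p$. Finally, your slack accounting assumes each vertex is touched by at most one tear, which is inconsistent with an iterative scheme. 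The remaining estimates (items (1)--(4)) are essentially right once the correct pairing is used, since there is exactly one tear per vertex of $\Delta_4$ and the number of such vertices is $O(m(L)/N)$.
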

\begin{proof}
As described above, we construct $\Lambda_5$ from two copies of $\Lambda_4$: let us denote them by $\Lambda_4$ and $\Lambda'_4$, and likewise denote subgraphs, vertices and edges with primes as appropriate.  At each branch vertex $p$ of $\Delta_4$ we have three unglued (elementary) edges with labels $a,b,c$ (pointing away from $p$, say), and one glued edge with label $d$ (also pointing away from $p$).  Denote by $e_x$ the short strip in $\Delta_4$ incident at $p$ with outgoing label $x$.  Let $p'$ be the corresponding vertex of $\Delta'_4$, which is of course locally isomorphic to $p$.

For each such pair of vertices $p$ of $\Delta_4$ and $p'$ of $\Delta'_4$, we choose a pair of bubbles so that we may perform a tear move at $p$ and $p'$.  In order to do this, we must choose a pair of bubbles $B_1$, $B_2$, with certain constraints on the labelings at their branch vertices.  We next describe one feasible set of constraints that enables the tearing operation to be performed (there will typically be other possible configurations).

Necessarily, at each branch vertex of $B_1$ and $B_2$, we need the incident glued (elementary) edge to have (outgoing) label $d$.  We will also require that each bubble $B_i$ is a union of three short strips $s^i_a,s^i_b,s^i_c$, with the property that at each branch vertex of $B_i$ the outgoing label on the short strip $s^i_x$ is equal to $x$.  Since there are only a finite number of possible local labelings at the branch vertices, and since each type of bubble occurs with roughly equal distribution, there are many bubbles satisfying this condition.

Later in the argument, it will also prove necessary that the strips $s^i_\bullet$ satisfy certain other constraints (see Lemma \ref{lem: Gluing remainder bizenes} below).  For the moment, it suffices that these constraints are mild enough to guarantee the existence of the bubbles $B_i$.

Given bubbles $B_1$, $B_2$ for a vertex $p$ of $\Delta_4$, we can perform the tearing operation, in such a way that after tearing, $e_a$ and $e'_a$ adjoin $s^1_b$ and $s^2_c$, $e_b$ and $e'_b$ adjoin $s^1_c$ and $s^2_a$, and $e_c$ and $e'_c$ adjoin $s^1_a$ and $s^2_b$.   Note that the resulting graph remains folded, and that the remainder $\Delta_4$ has been replaced by a union of bizenes.

Therefore, in order to perform the tearing operation, we need to find $V$ pairs of bubbles $B_1,B_2$ as above, where $V$ is the number of vertices of $\Delta_4$.   To do this, we divide all football bubbles in the undisturbed segments of the long strips into consecutive runs of seven.   For each vertex of the remainder $\Delta_4$, we need to choose two such runs of a specified type.   We furthermore insist on choosing these runs of seven from within the `innermost' part of the `outer' reservoir.

The number $V$ is equal to $2m(\Delta_4)/9\lambda\leq m(L)O(1/N)$. By pseudorandomness, the number of runs of seven bubbles of a fixed type is bounded below by
\[
\theta_4O(m(\Omega_4))\geq \theta_4O(m(L))
\]
(using that $\epsilon$ is sufficiently small).   Half of these runs of seven come from within the innermost part of the outer reservoir.  Therefore, as long as $N$ is large enough, we can always choose two suitable runs of seven bubbles for each vertex of $\Delta_4$, as required.

Since $V/m(L)$ is bounded above by a constant (depending on $k,\lambda$ and $\mu$) divided by $N$, the distribution of each bubble type has only been changed by $O(1/N)$.

After performing tears in this way, we obtain a new partially glued graph $\Lambda_5$, with the additional property that the remainder $\Delta_5$ is a disjoint union of bizenes and bicrowns (the latter of covering type). The mass of the remainder is still bounded above by
\[
m(\Delta_5)/m(L)\leq 12/N
\]
since three half-edges of $\Delta_4$ are replaced by 36 half-edges of $\Delta_5$, as shown in Figure \ref{tearing}.

Since, by construction, we only used bubbles in the tearing operation which came from the innermost half of the outer reservoir together with one bubble from the outermost part of the outer reservoir, the slack $\theta_5$ is no smaller than
\[
\theta_4(1/2-O(1/N))\geq \theta_4/3
\]
as claimed.
\end{proof}

\subsection{Adjusting football inventory with trades}\label{subsection:trades}

It will be necessary at a later stage of the argument to adjust the numbers of football pieces
of each kind, so that the reservoir itself can be entirely glued up. At this stage and subsequent
stages we must be careful to consider not just the combinatorial graph of our pieces, but also 
their {\em type} --- i.e.\/ their edge labels.

We now describe a move called a {\em trade} which has the following twofold effect:
\begin{enumerate}
\item{it reduces the number of footballs of a specified type by 3; and}
\item{it transforms four sets of 3 footballs, each of a specified type, into four bicrowns
each with the associated covering type.}
\end{enumerate}

Moreover, {\em unlike} the operation described in \S~\ref{subsection:distribution}, the trade
operation has {\em no effect on the remainder}. Thus, the trade moves can be performed {\em after}
the tear moves, to correct small errors in the distribution of football types, adjusting this
distribution to be {\em exactly} as desired.

The trade move is illustrated in Figure~\ref{trade}. We start with three strings of five
footballs, each string consisting of the same sequence of five football types in the same
orders. We also assume the labels on the three sets of four intermediate sticky segments agree.
We pull apart the sticky segments and reglue them in the pattern indicated in the figure, in
such a way that four sets of three footballs are replaced with bicrowns. If the three middle
footballs are of type $(a,b,c)$ then after regluing we can assume that the $a$ edges are all
together, and similarly for the $b$ and $c$ edges; thus these triples of edges may by glued
up, eliminating the three footballs.

\begin{figure}[htpb]
\labellist
\small\hair 2pt
\endlabellist
\centering
\includegraphics[scale=0.4]{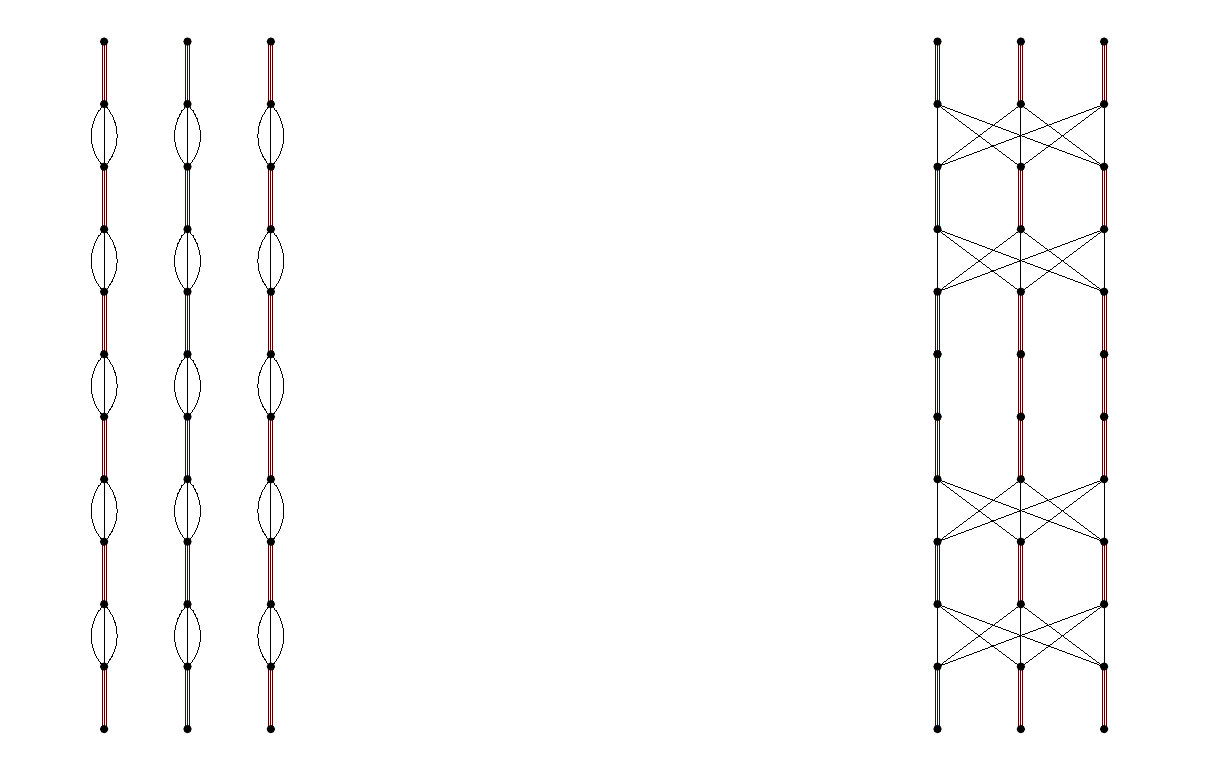}
\caption{The trade move; the ``before'' picture is on the left, and the ``after'' picture is on the
right. This move eliminates 3 footballs of a given type, and transforms four sets of
3 footballs with a given type into four bicrowns of the given covering type.}\label{trade}
\end{figure}

To summarize this succinctly, we introduce some notation.  For $B$ a type of football bubble, we denote by $\widetilde{B}$ the corresponding covering type of bicrown.  For $\mu$ a distribution on football bubbles and bicrowns of covering type, we denote by $\tilde{\mu}$ the following distribution on football bubbles.
\[
\tilde{\mu}(B)=\mu(B)+3\mu(\widetilde{B})
\] 

\begin{lemma}\label{lemma:trades}
Let the partially glued graph $\Lambda_5$ be as above and suppose that $\epsilon$ is sufficiently small.  There is a constant $C_5\in(0,1)$ with the following property.  Let $\mu_5$ be the distribution of bubble types in $\Omega_5$ and let $\nu$ be any integral distribution on bubble types such that, for each type $B$,
\[
\mu_5(B)(1-C_5)<\nu(B)\leq\mu_5(B)
\]
Then we may apply trades as above to 3 copies of the partially glued graph $\Lambda_5$ to produce a new partially glued graph $\Lambda_6$ such that:
\begin{enumerate}
\item the induced distribution $\mu_6$ on bubbles and bicrowns satisfies $\tilde{\mu}_6=3\nu$;
\item the mass of the remainder $\Delta_6$ is bounded above by
\[
\frac{m(\Delta_6)}{m(\Lambda_0)}\leq O(1/N);
\]
\item the slack satisfies $\theta_6\geq\theta_5/2$.
\end{enumerate}
\end{lemma}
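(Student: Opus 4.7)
The plan is to take three copies of $\Lambda_5$, which gives an initial reservoir count distribution of $3\mu_5$, a remainder consisting of three disjoint copies of $\Delta_5$, and slack still equal to $\theta_5$. I then perform a carefully chosen collection of trade moves, each using the three copies at one common site, so as to drive the combined count from $3\mu_5$ to exactly $3\nu$. Throughout, I only consume runs of five consecutive bubbles drawn from the innermost half of the outer reservoir.

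The first step is the bookkeeping for a single trade. A trade whose middle column has football type $B$ and whose four outer columns have types $T_1, T_2, T_3, T_4$ deletes three footballs of type $B$ outright and replaces the $3\cdot 4 = 12$ outer footballs by four bicrowns of covering types $\widetilde{T_1},\ldots,\widetilde{T_4}$. In the combined count $\widetilde{\mu}(C) := \mu(C) + 3\mu(\widetilde{C})$, such a move reduces $\widetilde{\mu}(B)$ by exactly $3$ and leaves $\widetilde{\mu}(T_i)$ unchanged for each $i$. Thus, if I perform exactly $n_B := \mu_5(B) - \nu(B)$ trades with middle column of type $B$, for every type $B$, the final combined count equals $3\mu_5 - 3(\mu_5 - \nu) = 3\nu$. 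The hypotheses on $\nu$ force each $n_B$ to be a non-negative integer with $n_B < C_5 \mu_5(B)$.

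The next step is to verify that enough disjoint trade sites exist in the inner half of the outer reservoir. Since we have three copies of $\Lambda_5$, the three strings required for a trade can be taken at the \emph{same} location in each of the three copies, in which case their labels automatically agree; the only substantive requirement is that the common five-bubble window in $\Lambda_5$ realizes a prescribed sequence of bubble types, together with local sticky labels forcing the resulting bicrowns to be of covering type (a finite condition that occurs with positive density). By Lemma~\ref{lem: Tearing}(3), each bubble type in $\Omega_5$ appears with frequency within $O(1/N)$ of the strictly positive $\mu'(B)$; combined with the pseudorandomness of $r$, this implies that the number of five-bubble windows in the inner half of the outer reservoir realizing any given feasible specification is bounded below by a positive constant (depending only on $k,\lambda,\mu$) times $m(L)$. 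On the other hand, $\sum_B n_B \leq C_5 \sum_B \mu_5(B)$ is a constant multiple of $m(L)/\lambda$, so for $C_5$ sufficiently small a straightforward greedy assignment allocates disjoint sites for all $\sum_B n_B$ trades at once.

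The conclusions then follow. Item (1) is exactly the bookkeeping above. For item (2), trades are supported entirely inside the reservoir and never touch the remainder, so $\Delta_6$ is three disjoint copies of $\Delta_5$ and $m(\Delta_6)/m(L) = m(\Delta_5)/m(L) \leq O(1/N)$. For item (3), since consumed bubbles come only from the inner half of the outer reservoir, at worst half of the outer reservoir is used up, yielding $\theta_6 \geq \theta_5/2$. The principal obstacle is the supply-counting argument in the third paragraph: calibrating $C_5$ against the constant density of feasible five-bubble windows guaranteed by pseudorandomness and Lemma~\ref{lem: Tearing}(3), in direct analogy to the bubble-accounting step in the proof of Lemma~\ref{lem: Tearing}.
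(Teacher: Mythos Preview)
Your argument is essentially the paper's: take three copies of $\Lambda_5$, perform each trade at the \emph{same} site in all three copies so the three five-bubble strings agree automatically, draw all sites from the inner half of the outer reservoir, and choose the number of trades with central type $B$ so that the combined count lands on $3\nu(B)$. The paper phrases site selection as a random black/white coloring of central bubbles rather than a greedy assignment, but that is cosmetic.

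Two small corrections. First, your count $n_B=\mu_5(B)-\nu(B)$ forgets the bicrowns already sitting in $\Delta_5$ from the tearing step; since $\mu_6$ is a distribution on bubbles \emph{and} bicrowns, those contribute to $\tilde{\mu}_6$, and the right number of trades is $\tilde{\mu}_5(B)-\nu(B)$ where $\tilde{\mu}_5$ also counts the $\Delta_5$-bicrowns (the paper's proof does exactly this, silently overloading $\mu_5$). The discrepancy is $O(1/N)$ and the fix is immediate. Second, at each site you need only prescribe the \emph{central} bubble type $B$, not a full five-bubble pattern, and the four side bicrowns are automatically of covering type; the relevant supply estimate is that the outer reservoir remains within $O(c^N)\epsilon$ of uniform (it was untouched by the adjustment in \S\ref{subsection:distribution}), not Lemma~\ref{lem: Tearing}(3), which concerns the overall distribution near $\mu'$.
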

\begin{proof}
Let $\mu_5$ be the distribution on football bubbles and bicrowns derived from $\Lambda_5$.  From the upper bound on the total mass of the remainder, it follows that $1\leq\tilde{\mu}_5/\mu_5\leq 1+O(1/N)$. 

We divide the inner half of the outer reservoir $\Omega^o_5$ into strips of five contiguous football bubbles, and call a football bubble \emph{fifth} if it lies in the center of such a quintuple.  Let $\bar{\mu}_5$ denote the distribution of fifth football bubbles in the inner half of the outer reservoir $\Omega^o_5$.
 
Recall that, in the outer reservoir, the bubbles are distributed within $O(c^N)\epsilon$ of the uniform distribution.  Therefore, for any football bubble type $B$, $|\bar{\mu}_5(B)-\rho|<O(c^N)\epsilon$ (where $\rho$ is the uniform distribution, scaled appropriately).  In particular, taking $N$ sufficiently large and $\epsilon$ sufficiently small, we have
\[
\bar{\mu}_5(B)> C_5\mu_5(B)
\]
for some constant $C_5$.  Hence the hypothesis of the lemma implies that $\bar{\mu}_5(B)> \mu_5(B)-\nu(B)$ for every type $B$ of football bubble. If $N$ is sufficiently large then it follows further that $\bar{\mu}_5(B)\geq \tilde{\mu}_5(B)-\nu(B)$ for every type $B$.

Consider each bubble in the center of a quintuple in the outer reservoir of type $B$.  We color the bubble black with probability $(\tilde{\mu}_5(B)-\nu(B))/\bar{\mu}_5(B)$ and white otherwise. 

We now construct $\Lambda_6$ from three copies of $\Lambda_5$, by performing a trade at each bubble colored black.  Taking three copies of $\Lambda_5$ triples the number of each bubble type. The lemma is phrased so that replacing three bubbles of a given type by a bicrown of corresponding covering type is neutral.  The only remaining effect of a trade is then to remove exactly three bubbles of the central type.  This proves the lemma.
\end{proof}

In the sequel, we will apply this lemma with a particular distribution $\nu$, described in Lemma \ref{lem: Cubical distribution} below.

\subsection{Cube and prism moves}\label{subsection:cube_move}

We have two more gluing steps: a small mass of bicrowns and bizenes must be glued up with footballs
(drawn from an almost equidistributed collection of much larger mass), then the distribution of
the footballs can be corrected by trades so that they are perfectly evenly distributed, and finally
an evenly distributed collection of footballs (i.e.\/ a collection with {\em exactly} the same number
of footballs with each possible label) must be entirely glued up.  We next describe three moves which will enable us to glue up bubbles, bizenes and bicrowns. 

\subsubsection{The cube move}

The idea is very simple: four footballs with appropriate edge labels can be draped over
the 1-skeleton of a cube in a manner invariant under the action of the Klein 4-group, and then
glued up according to how they match along the edges of the cube. This is indicated in
Figure~\ref{footballs_on_cube}.

\begin{figure}[htpb]
\labellist
\small\hair 2pt
\endlabellist
\centering
\includegraphics[scale=0.7]{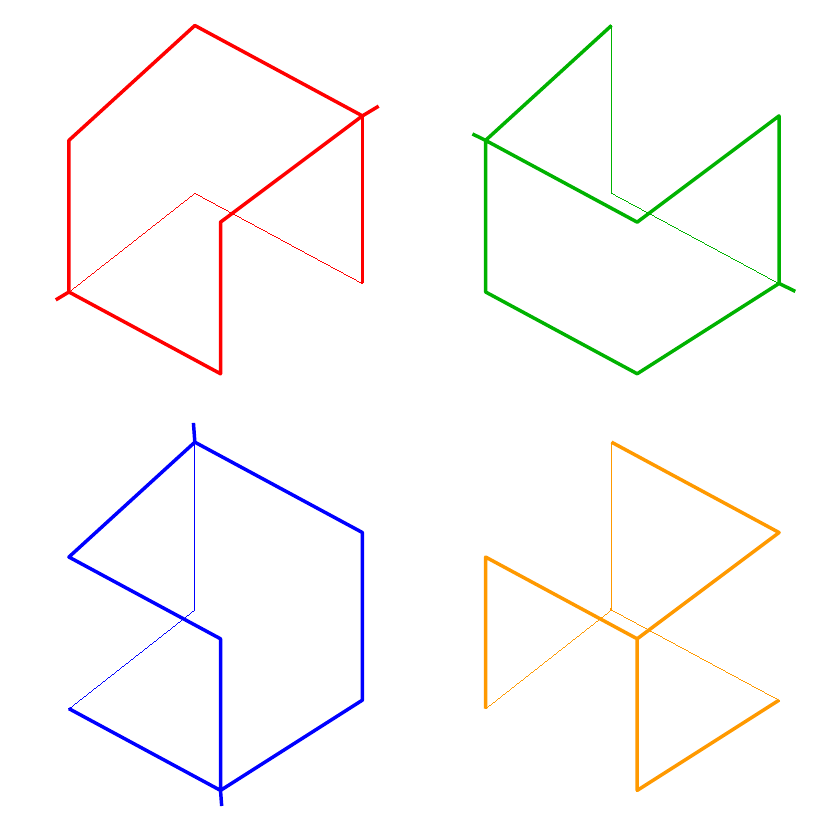}
\caption{Four footballs draped over the 1-skeleton of a cube}\label{footballs_on_cube}
\end{figure}

We next give an algebraic description of the cube move in terms of covering spaces of graphs, which enables us to give a precise description of the various coverings of the cube moves that we will also need.

Consider two theta-graphs, $\Theta$ and $H$.  The three edges of $\Theta$ are denoted by $\alpha,\beta,\gamma$ (oriented so they all pointing in the same direction), and likewise the three edges of $H$ are denoted by $a,b,c$. We consider the immersion $\Theta\to H$ which maps the edges of $\Theta$ to concatenations of edges of $H$ as follows:
\[
\alpha\mapsto a\bar{b}c~, ~\beta\mapsto b\bar{c}a~,~\gamma\mapsto c\bar{a}b
\]
(as usual, $\bar{a}$ denotes $a$ with the opposite orientation etc).  To enable us to reason group-theoretically, we set $\delta=\alpha\bar{\gamma}$ and $\epsilon=\beta\bar{\gamma}$, and similarly $d=a\bar{c}$ and $e=b\bar{c}$. Fixing base points at the initial vertices of all the edges, $\pi_1\Theta$ is the free group on $\delta,\epsilon$ and $\pi_1H$ is the free group on $d,e$.  We immediately see that the immersion $\Theta\to H$ induces the identifications
\[
\delta=\alpha\bar{\gamma}=a\bar{b}c\bar{b}a\bar{c}=(a\bar{c})(\overline{b\bar{c}})^2(a\bar{c})=de^2d
\]
and
\[
\epsilon=\beta\bar{\gamma}=b\bar{c}a\bar{b}a\bar{c}=(b\bar{c})(a\bar{c})\overline{(b\bar{c})}(a\bar{c})=ede^{-1}d
\]

The graph $\Theta$ should be thought of as a bubble, and the graph $H$ as a pattern for gluing it up.  In what follows, we will describe various covering spaces $H_\bullet\to H$. The fibre product $\Theta_\bullet$ of the maps $H_\bullet\to H$ and $\Theta\to H$, together with the induced map $\Theta_\bullet\to H_\bullet$, will then describe various gluing patterns for (covering spaces of) unions of bubbles.

We first start with the cube move itself.  Consider the natural quotient map $q_4:\pi_1H\to H_1(H;\Z/2\Z)\cong V$, where $V$ is the Klein 4-group.  The corresponding covering space $H_4\to H$ is a cube with quotient graph $H$. Note that the deck group $V$ acts freely on the cube $H_4$, freely permuting the diagonals.  Since $q_4(\pi_1\Theta)=1$, the fibre product $\Theta_4$ is a disjoint union of four copies of $\Theta$, each spanning a diagonal in the cube $H_4$ and freely permuted by $V$.  In particular, the map $\Theta_4\to H_4$ precisely defines the cube move.

\subsubsection{Gluing bizenes}

We next describe a gluing move for bizenes.  Consider the quotient map $q_8$ from $\pi_1H$ to the dihedral group $D_8=\langle\sigma,\tau\mid \sigma^4=\tau^2=\tau\sigma\tau\sigma=1\rangle$ defined by $d\mapsto\sigma$ and $e\mapsto\tau$, and let $H_8$ be the covering space of $H$ corresponding to the kernel of $q_8$.  Since $q_8$ factors through $q_4$, $H_8$ is a degree-two covering space of the cube $H_4$ 
(the graph $H_8$ is in fact the 1-skeleton of an octagonal prism).  We next calculate the restriction of $q_8$ to $\pi_1\Theta$:
\[
q_8(\delta)=q_8(de^2d)=\sigma\tau^2\sigma=\sigma^2
\]
while
\[
q_8(\epsilon)=q_8(ede^{-1}d)=\tau\sigma\tau\sigma=1
\]
The covering space of $\Theta$ corresponding to the restriction of $q_8$ is therefore a bizene.  It follows that the fibre product map
\[
\Theta_8\to H_8
\]
describes a double cover of the cube move, which glues four bizenes along an octagonal prism.  We will call this an \emph{8-prism move}.

\subsubsection{Gluing bicrowns}

Finally, we describe a gluing move for bicrowns.  Consider the quotient map $q_{12}:\pi_1H\to D_{12}=\langle\sigma,\tau\mid \sigma^6=\tau^2=\tau\sigma\tau\sigma=1\rangle$ defined by $d\mapsto\tau$ and $e\mapsto\sigma$.  Then, as before, since $q_{12}$ factors through $q_4$, the kernel of $q_{12}$ corresponds to a regular covering space $H_{12}\to H_4$ of degree three 
(in fact, $H_{12}$ is isomorphic to the 1-skeleton of a dodecagonal prism).  Again, we calculate the restriction of $q_{12}$ to $\pi_1\Theta$, and find that
\[
q_{12}(\delta)^{-1}=q_{12}(\epsilon)=\sigma^2
\]
(an element of order 3).  The covering space of $\Theta$ corresponding to the restriction of $q_{12}$ is therefore a bicrown.  In particular, the fibre product map
\[
\Theta_{12}\to H_{12}
\]
describes a triple cover of the cube move, which glues four bicrowns along a dodecagonal prism.  We will call this a \emph{12-prism move}.

\subsection{Creating bizenes and bicrowns}

To complete the proof of the Thin Spine Theorem, we need to glue up the remaining small mass (of order $1/N$) of bizenes and bicrowns using prism moves, before gluing up the remaining football bubbles using cube moves.  We shall see that trades provide us with enough flexibility to do this, as long as $N$ is large enough.  However, since prism moves require that bizenes are glued up with bizenes and our bicrowns are glued up with bicrowns, and yet the reservoir consists only of football bubbles, we will need a move that turns football bubbles into bizenes and bicrowns of given types.

\subsubsection{Bicrown assembly}

Since the bicrowns that we need to assemble are all of covering type, it is straightforward to
construct them from football bubbles. Given a bicrown of covering type $\widetilde{B}$, covering a football bubble of type $B$, there is a move which takes as input three identical triples of bubbles each of type $B$, and transforms them into three bicrowns of type $\widetilde{B}$, without any other changes to the unglued subgraph.  

The following lemma is an immediate consequence of the fact that a 12-prism triply covers a cube.

\begin{lemma}\label{lem: Gluing bicrowns}
For every bicrown of covering type there exist three more bicrowns of covering type such that the four bicrowns together can be glued with a 12-prism move.
\end{lemma}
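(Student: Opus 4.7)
The plan is to exploit directly the fact, established in the previous subsubsection, that the 12-prism move is by construction the triple cover of a cube move along the covering $H_{12}\to H_4$. In particular, any cube move involving four footballs lifts, via this covering, to a 12-prism move involving four bicrowns of covering type. So the lemma will reduce to the existence of a cube move containing the underlying football $B$ that the given bicrown $\widetilde{B}$ covers.

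First I would identify the football bubble $B$ whose labels, pulled back along the triple cover $\Theta_{12}\to\Theta$, produce the labels on $\widetilde{B}$; this is well-defined by the meaning of covering type (a bicrown of covering type is determined up to isomorphism by its underlying football type). Since the football bubbles in the reservoir have been arranged to have distribution proportional to the cube distribution $\mu$ by the adjustment step of \S\ref{subsection:distribution}, the type $B$ lies in the support of $\mu$. Consequently $B$ participates in some cube move: there exist three football bubbles $B_1,B_2,B_3$ such that the 4-tuple $\{B,B_1,B_2,B_3\}$ can be assembled on the 1-skeleton of the cube via the covering $H_4\to H$ described in the previous subsubsection.

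Next I would lift this cube move along the degree-three covering $H_{12}\to H_4$. By the fibre-product description of $\Theta_{12}\to H_{12}$, each football $B_i$ pulls back to a unique bicrown $\widetilde{B}_i$ of covering type (with labels pulled back from those on $B_i$), and $\widetilde{B}$ is one of the four lifts. By the very definition of the 12-prism move, the four bicrowns $\widetilde{B},\widetilde{B}_1,\widetilde{B}_2,\widetilde{B}_3$ are glued along $H_{12}$.

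The main (and essentially only) point requiring justification is the existence of a cube extension $\{B,B_1,B_2,B_3\}$ of $B$; this is precisely what the support of the cube distribution $\mu$ ensures, and beyond this everything is formal from the definition of the 12-prism move as the triple cover of the cube move. I do not anticipate any serious obstacle here, since the whole point of the adjustment step was to align the reservoir's distribution with $\mu$ so that such lifts are always available.
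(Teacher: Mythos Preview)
Your approach is exactly the paper's: the lemma is stated as ``an immediate consequence of the fact that a 12-prism triply covers a cube'', and your lifting argument via the fibre product $\Theta_{12}\to H_{12}$ over $H_{12}\to H_4$ spells this out correctly.

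One small point: your justification that the underlying football type $B$ lies in the support of the cube distribution $\mu$ is phrased backwards. You argue that since the reservoir has been adjusted to have distribution proportional to $\mu$, the type $B$ must lie in $\operatorname{supp}(\mu)$; but the lemma concerns an \emph{arbitrary} bicrown of covering type, not one known to come from the reservoir, so this reasoning is circular. The fact you actually need is that $\mu$ has full support --- i.e.\ every football type arises as one of the four footballs of some labelled cube --- and this is asserted directly in \S\ref{subsection:distribution} (where $\mu$ is required to be strictly positive on every football type, and is then taken to be the cube distribution). Once you invoke that, the rest of your argument is complete and matches the paper.
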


\subsubsection{Bizene assembly}

Bizene assembly is more subtle, because the bizenes that we need are more general than simply of covering type.  We assemble bizenes using the following move.

Consider an adjacent pair of bubbles of type $(a,b,c)$ and $(a',b',c')$, separated by a sticky strip of type $x$.  Suppose also that the second bubble (of type $(a',b',c')$) is followed by a further sticky strip also of type $x$.  Consider also a second pair of bubbles, of the same type but with the two bubbles swapped. From these two pairs we may construct two bizenes of the same type. The pairs of edges with the same start and end points are labeled $(a,b)$ and $(a',b')$, while the edges joining one pair to the other are labeled $c$ and $c'$.

The bizenes that we may assemble in this way satisfy some constraints, arising from the fact that both the ends of the following triples must all be compatible with the start of $x$: $(a,b,c)$, $(a',b',c')$, $(a,b,c')$, $(a',b',c)$.  In the rank-two case, this creates especially strong constraints, which (up to relabeling) can be simply stated as requiring that the ends of $a$, $b$ and $c$ should be equal to the ends of $a'$, $b'$ and $c'$ respectively.  We shall call such a bizene \emph{constructible}.

Just as the bizenes that we can construct are constrained, so the bizenes that we need to glue up from the remainder are also of a special form.  Indeed, in the proof of Lemma \ref{lem: Tearing} we were free to choose the interiors of the bubbles $B_1$ and $B_2$ in any way.

\begin{figure}[ht]
\begin{center}
 \centering \def\svgwidth{250pt}
 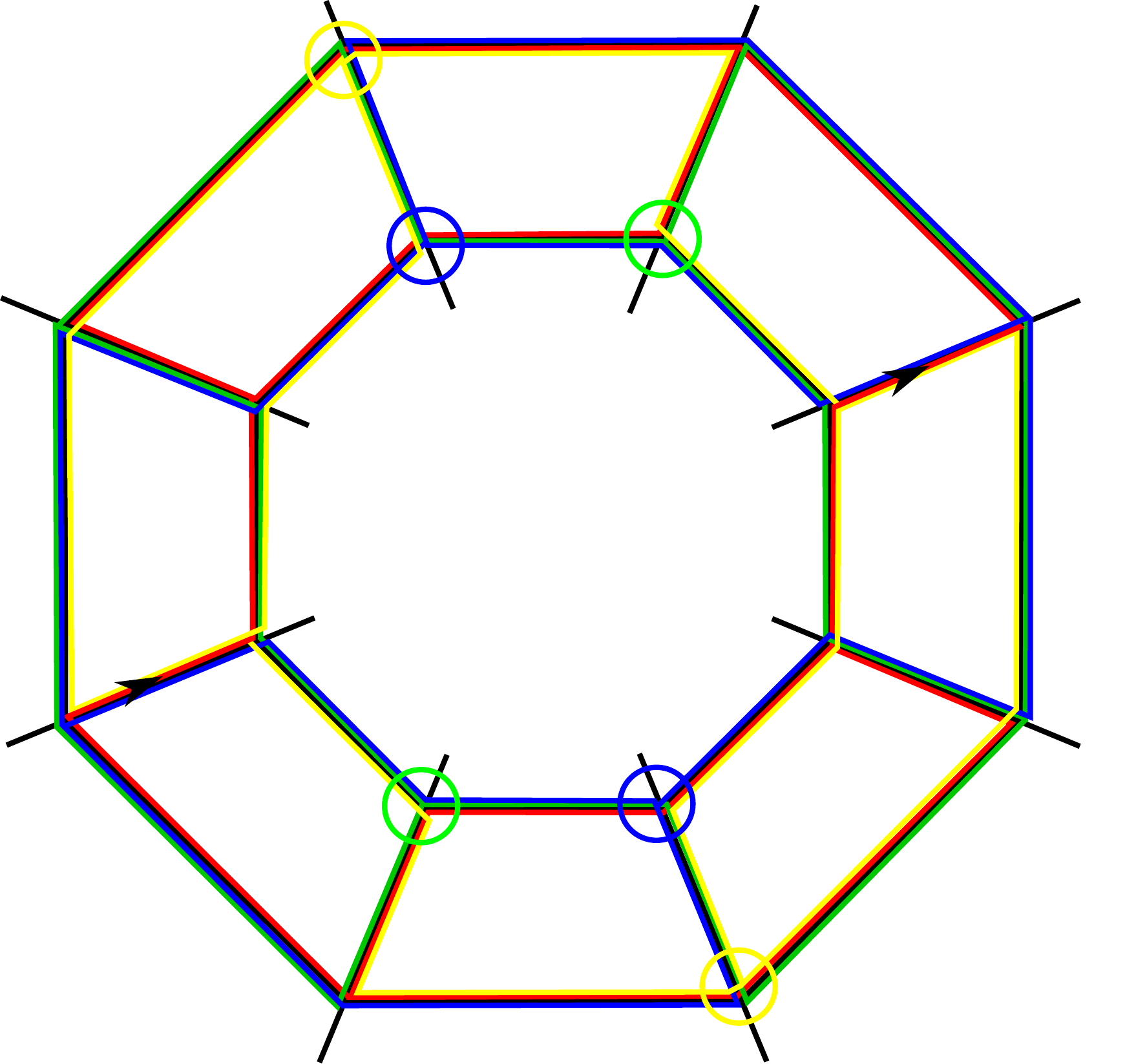
 \caption{Four bizenes draped over an octagonal prism.  A remainder bizene is colored red; the only constraint in its construction arises from the two identical arcs labeled $\alpha$.  The yellow, green and blue bizenes are constructible.  They are constrained only by the requirements that the circled vertices should be identically labeled.}\label{fig: Gluing bizenes}
\end{center}
\end{figure}

\begin{lemma}\label{lem: Gluing remainder bizenes}
There exist choices of the bubble types $B_1$ and $B_2$ in the proof of Lemma \ref{lem: Tearing} such that the resulting remainder bizenes can be glued with a prism move to three constructible bizenes.
\end{lemma}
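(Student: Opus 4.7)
The plan is to show that the freedom left in choosing $B_1, B_2$ in the tear construction, together with pseudorandomness of the reservoir, is more than enough to match each remainder bizene to three constructible bizenes via an $8$-prism move.

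First I would make explicit the label data of a remainder bizene. After the tear at $p, p'$, the four branch vertices of the remainder bizene inherit their local labels from the letters $a,b,c,d$ at $p$ (and $p'$), so the endpoints of its four edges are completely determined. The edges themselves are short words cut out of the strips $s^i_a, s^i_b, s^i_c$ of $B_1, B_2$, and by construction the only constraint between them is the arc-identification visible in Figure~\ref{fig: Gluing bizenes} as the two red arcs marked $\alpha$. Thus the remainder bizene is rigid at its four vertices but essentially free in the interiors of its four edges, subject only to this single arc equality.

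Next I would translate the $8$-prism move into concrete labeling constraints. Using the algebraic description from \S\ref{subsection:cube_move}, the four bizenes of an $8$-prism must jointly project to a single theta graph $H$ under the covering $H_8 \to H$; this reduces to agreement of the four bizenes at the eight circled vertices of the prism, together with the single arc-identification inside the remainder bizene. Once the remainder bizene is fixed, these conditions prescribe the vertex labels of the three partner bizenes exactly, and the prescribed labels are precisely of the form for which constructibility from a pair of adjacent bubbles sandwiched between sticky strips of a common type $x$ is possible (as in the definition in \S\ref{subsection:tear}).

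Finally, pseudorandomness supplies both the partners and admissible choices of $B_1, B_2$. Inside the inner part of the outer reservoir, the number of pairs of consecutive bubbles realizing any prescribed constructible type is $\Omega(m(L))$ up to the additive $O(c^N)\epsilon$ error, whereas the total number of tears is $O(m(L)/N)$, so with $N$ large and $\epsilon$ small there is ample supply of constructible partners to assign to each remainder bizene. Symmetrically, the extra arc-identification constraint on $B_1, B_2$ cuts the count of admissible pairs only by a bounded factor, so $B_1, B_2$ meeting both the constraints of Lemma~\ref{lem: Tearing} and the new arc-identification remain abundant. The main obstacle is combinatorial bookkeeping: the bubbles and constructible partners needed across all simultaneous tears must be drawn disjointly from the outer reservoir. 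This is handled by a standard Hall-type matching, and succeeds because the mass to be glued is only $O(m(L)/N)$ while the mass of each required type in the reservoir is $\Omega(m(L))$.
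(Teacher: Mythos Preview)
Your proposal has a genuine gap at the crucial step. The entire content of the lemma is a combinatorial compatibility check on the $8$-prism: the remainder bizene carries two arcs $e_\bullet$ that are \emph{predetermined} by the tear location (they are the short strips of $\Delta_4$ and cannot be altered), while each of the three partner bizenes must satisfy the constructibility constraint, which pins down certain pairs of vertex labels. One must verify that these two families of constraints can be satisfied simultaneously. The paper's proof does this by observing (via the figure) that the constrained vertices of the three constructible bizenes are disjoint from one another and from the determined arcs $e_\bullet$; hence one labels $e_\bullet$ and the circled vertices independently, fills in the rest of the $8$-prism arbitrarily, and reads off $B_1,B_2$ from the free edges of the red bizene. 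Your paragraph~3 simply asserts this compatibility (``the prescribed labels are precisely of the form for which constructibility \ldots\ is possible'') without giving any reason, and your paragraph~2 seems to treat the remainder bizene as fixed before the disjointness check is made, which inverts the logic.

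Your final paragraph on pseudorandomness and Hall-type matching does not belong here. The lemma is purely about the existence of suitable \emph{types} $B_1,B_2$; the supply question (whether enough bubbles of those types actually sit in the reservoir) is handled in Lemma~\ref{lem: Tearing} and in the later assembly steps. Folding the supply argument into this lemma obscures what is being proved. There are also minor slips: a bizene has six edges and four vertices, not four edges; and the determined arcs are labeled $e_\bullet$ in the figure (the $\alpha$ in the caption appears to be a separate label embedded in the image or an inconsistency).
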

\begin{proof}
Such a choice is illustrated in Figure \ref{fig: Gluing bizenes}.  Note that the constrained vertices of the yellow, green and blue constructible bizenes are disjoint from each other and from the determined arcs $e_\bullet$ of the red remainder bizene.  Therefore, we can start by labeling the two arcs $e_\bullet$ and the constrained vertices, and then label the rest of the 8-prism in any way we want. Doing this for each $e_\bullet$ determines the bubble types $B_1$ and $B_2$.
\end{proof}

\subsection{Gluing up the remainder}

In this section we use the moves described above to completely glue up the remainder and the reservoir.  But first we will address two important details which we have hitherto left undefined: the distributions $\mu$ (of Lemma \ref{lem: Adjusting the distribution}) and $\nu$ (of Lemma \ref{lemma:trades}).

We first describe `cubical distributions'.  Consider any distribution $\kappa$ on the set of types of cubes with side length $\lambda$.  The cube move associates to each type of cube a collection of four types of football bubbles.  The push forward of any distribution $\kappa$ to the set of types of football bubbles is called a \emph{cubical distribution}.  In particular, if $\kappa$ is the uniform distribution on the set of types of cubes then we call the push forward the \emph{uniform cubical distribution}, or just the \emph{cube distribution} for brevity.

In Subsection \ref{subsection:distribution} above we may take $\mu$ to be the cube distribution, so that the set of football bubbles in the reservoir $\Omega_5$ is within $O(1/N)$ of $\mu'$, a distribution proportional to the cube distribution.

We next address the distribution $\nu$ from Lemma \ref{lemma:trades}.  It consists of two parts: any cubical distribution $\kappa$, and a \emph{bizene correction distribution} $\beta$.  That is, $\nu=\kappa+\beta$.  So we need to describe the bizene correction distribution.

The remainder $\Delta_5$ consists of (remainder) bizenes and bicrowns.  By Lemma \ref{lem: Gluing remainder bizenes}, to each remainder bizene we associate (some choice of) three constructible bizenes.  Each constructible bizene can in turn be constructed from a pair of types of football bubble. Thus, to each remainder bizene we associate six football bubbles.  Summing over all bizenes in the remainder $\Delta_5$ defines the distribution $\beta$. 

In order to apply Lemma \ref{lemma:trades}, we need to check that there is a cubical distribution $\kappa$ such that $\nu=\kappa+\beta$ satisfies the hypotheses of the lemma.

\begin{lemma}\label{lem: Cubical distribution}
If $N$ is sufficiently large then there exists a cubical distribution $\kappa$ such that the integral distribution $\nu=\kappa+\beta$ satisfies
\[
1-C_5\leq\nu/\mu_5\leq 1
\]
(where $C_5$ is the constant from Lemma \ref{lemma:trades}).  Furthermore, as long as $m(L)$ is sufficiently large, we may take $\kappa$ to be integral.
\end{lemma}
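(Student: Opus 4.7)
My plan is to construct $\kappa$ as a scalar multiple of the cube distribution itself, i.e.\ $\kappa = t\mu$ for a positive scalar (and eventually integer) $t$. Because $\mu$ is the push-forward of the uniform unit distribution on the finite set of cube types, it is already integer-valued and $t\mu$ is the push-forward of the integer distribution which assigns the value $t$ to every cube type; this simultaneously guarantees cubicality and (for integer $t$) integrality of $\kappa$ in the strongest sense. Since $\beta$ is integer-valued (it counts specific footballs assembled from remainder bizenes), $\nu = t\mu + \beta$ is integer-valued automatically.

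First I would make the size comparisons explicit. Write $\mu' = \alpha\mu$ for the distribution proportional to the cube distribution produced by Lemma \ref{lem: Adjusting the distribution}. The total mass of $\mu'$ is $\Theta(m(L))$ while $\mu$ has total mass a constant depending only on $k$ and $\lambda$, so $\alpha = \Theta(m(L))$. The tearing lemma gives $|\mu_5(B) - \mu'(B)| = O(\mu'(B)/N)$ uniformly in $B \in \mathrm{supp}(\mu)$. For $\beta$, the construction of bizenes from the remainder produces at most six footballs per remainder bizene, and the number of remainder bizenes is $O(m(\Delta_5)/\lambda) = O(m(L)/N)$; using that $\mu(B)$ is bounded below by a positive constant depending only on $k,\lambda$ on its (finite) support, this gives $\beta(B) = O(\mu'(B)/N)$ uniformly.

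Next, both required inequalities $(1-C_5)\mu_5(B) < \nu(B) \leq \mu_5(B)$ translate, after dividing through by $\mu(B)$, into two-sided bounds on $t$ of the form
\[
(1-C_5)\alpha\bigl(1 - O(1/N)\bigr) \;\leq\; t \;\leq\; \alpha\bigl(1 - O(1/N)\bigr),
\]
where the $O(1/N)$ terms absorb both the deviation of $\mu_5$ from $\mu'$ and the contribution of $\beta$. For $N$ sufficiently large the interval is nonempty and has length $\Theta(C_5 \alpha) = \Theta(m(L))$. This already proves the first assertion (with real $t$). For the second assertion, the length of the admissible interval tends to $+\infty$ with $m(L)$, so once $m(L)$ is large enough it contains a positive integer $t$; taking $\kappa = t\mu$ then finishes the proof.

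The only real delicacy is ensuring the $O$-estimates on $\mu_5 - \mu'$ and $\beta$ are genuinely \emph{uniform} in $B$, so that the interval for $t$ can be read off in closed form. This is guaranteed by the finiteness of the support of $\mu$ and the lower bound on $\mu(B)$ there, both of which depend only on $k$ and $\lambda$; there are no per-type pathologies to rule out.
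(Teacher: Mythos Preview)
Your argument is correct and follows essentially the same route as the paper. Both proofs take $\kappa$ to be a scalar multiple of the cube distribution (the paper writes $\kappa=\eta\mu'$ with $\mu'=\alpha\mu$, which is your $\kappa=t\mu$ with $t=\eta\alpha$), reduce the two-sided inequality to an interval condition on the scalar using the uniform $O(1/N)$ bounds on $\mu_5-\mu'$ and on $\beta$, and then observe that for large $N$ the interval is nonempty and for large $m(L)$ it is long enough to contain an integral point. Your treatment of integrality---noting that $t\mu$ is the push-forward of the constant-$t$ distribution on cube types, so integrality holds simultaneously at the cube and football level once $t\in\Z$---is slightly more explicit than the paper's, but the substance is identical.
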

\begin{proof}
Since the mass of the remainder is $O(1/N)m(L)$ and $\mu_5$ is bounded below, it follows that $\beta(B)\leq O(1/N)\mu_5(B)$ for each type $B$ of football bubble, so it suffices to show that there is an integral cubical distribution $\kappa$ satisfying
\[
1-C_5 \leq \kappa/\mu_5\leq 1-O(1/N)
\]
By the construction of $\Lambda_5$, there is a cubical distribution $\mu'$ such that $|1-\mu'/\mu_5|<O(1/N)$. Choose a rational $\eta\in(1-C_5,1)$.  As long as $N$ is sufficiently large we will also have that $1-C_5+O(1/N)<\eta<1-O(1/N)$, and it follows that $\kappa=\eta\mu'$ satisfies the required condition.  Furthermore, if $m(L)$ is sufficiently large then $\eta$ can be chosen so that $\kappa$ is integral.
\end{proof}

We can now glue up all the bizenes, using bizene assembly and the 8-prism move.

\begin{lemma}
Let $\Lambda_6$ be as in Lemma \ref{lemma:trades}, using the distribution $\nu$ from Lemma \ref{lem: Cubical distribution}.  Then we may apply 8-prism moves to 2 copies of $\Lambda_6$ to produce a partially glued graph $\Lambda_7$ such that:
\begin{enumerate}
\item every component of the remainder $\Delta_7$ is a bicrown of covering type;
\item the total mass of the remainder $\Delta_7$ satisfies $m(\Delta_7)/m(\Lambda_7)\leq O(1/N)$;
\item if $\mu_7$ is the distribution of bubbles and bicrowns in $\Lambda_7$ then $\tilde{\mu}_7$ is cubical;
\item the slack $\theta_7$ satisfies $\theta_7\geq\theta_6/2$.
\end{enumerate}
\end{lemma}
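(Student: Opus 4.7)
The plan is to eliminate every bizene from the remainder by constructing ``constructible bizene'' partners out of reservoir footballs via bizene assembly, then gluing them in quadruples with 8-prism moves. Two copies of $\Lambda_6$ are taken for parity: bizene assembly produces bizenes of a single type in pairs, so doubling the remainder arranges that the three constructible bizene types prescribed for each remainder bizene are demanded by twinned pairs of bizenes and can therefore be supplied by three bizene-assembly operations (each giving two bizenes of a fixed type).

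Concretely, I would first fix a pairing of each remainder bizene in the doubled graph with its twin across the two copies of $\Lambda_6$. For each twin pair, Lemma~\ref{lem: Gluing remainder bizenes} specifies three constructible bizene types such that each remainder bizene, together with those three, is glued up by an 8-prism move. Supplying both twins thus requires six constructible bizenes, arranged as three pairs of identical types, which is exactly the output of three bizene-assembly moves consuming twelve football bubbles of the types prescribed by $\beta$. Since $\nu=\kappa+\beta$ was designed in Lemma~\ref{lem: Cubical distribution} so that $\Lambda_6$ contains the $\beta$-bubbles in the requisite numbers and types (after trades), two copies contain $2\beta$'s worth of such bubbles, which matches the doubled demand exactly. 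I draw all bubbles from the innermost half of the outer reservoir, which leaves the outermost half untouched and hence gives $\theta_7\geq\theta_6/2$, proving claim (4).

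After applying the 8-prism moves, every bizene has been glued into $\Gamma_7$, so $\Delta_7$ consists precisely of the (doubled) bicrowns already present in $\Delta_6$; since these were of covering type in $\Delta_5$ and trades preserve this property, claim (1) follows. For claim (2), no new remainder components are produced, so $m(\Delta_7)\leq 2\,m(\Delta_6)=O(1/N)\,m(L)$, while $m(\Lambda_7)\geq m(L)$. For claim (3), the 8-prism moves leave bicrown counts untouched (only doubling them via the two copies) and remove exactly $6\beta(B)$ football bubbles of each type $B$, so a direct computation using $\tilde{\mu}_6=3\nu$ from Lemma~\ref{lemma:trades} gives
\[
\tilde{\mu}_7(B)=\mu_7(B)+3\mu_7(\widetilde B)=2\mu_6(B)-6\beta(B)+6\mu_6(\widetilde B)=2\tilde{\mu}_6(B)-6\beta(B)=6\nu(B)-6\beta(B)=6\kappa(B),
\]
which is cubical since $\kappa$ is.

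The only nontrivial point is arranging that bizene assembly can manufacture bizenes of exactly the types demanded by the 8-prism gluings attached to each remainder bizene. This is precisely the content of Lemma~\ref{lem: Gluing remainder bizenes}, whose flexibility in choosing the interior bubble types $B_1,B_2$ at the tearing step was set up so that the three constructible bizenes exist and their component bubble types feed into the definition of $\beta$. Granted this compatibility, the rest of the argument is bookkeeping.
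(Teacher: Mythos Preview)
Your proof is correct and follows exactly the paper's approach: double $\Lambda_6$ for the parity reason you identify (bizene assembly produces constructible bizenes in pairs), manufacture the needed constructible bizenes from bubbles in the inner half of the outer reservoir, glue all bizenes away with 8-prism moves, and check that the surviving bubble--bicrown distribution is cubical. One minor bookkeeping slip: where you write ``two copies contain $2\beta$'s worth of such bubbles,'' the correct count is $6\beta$, since $\Lambda_6$ is already built from three copies of $\Lambda_5$ and carries $\tilde{\mu}_6=3\nu=3\kappa+3\beta$; your later displayed computation correctly uses $6\beta(B)$ and arrives at $\tilde\mu_7=6\kappa$ (the paper's own line $\tilde\mu_7=2\tilde\mu_6-2\beta=2\kappa$ contains the analogous arithmetic lapse, but in either case the distribution is a scalar multiple of $\kappa$ and hence cubical).
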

\begin{proof}
Let $\tilde{\beta}$ be the distribution of constructible bizenes required to glue up the remainder bizenes in $\Lambda_6$.  Take two copies of $\Lambda_6$.  Using the bizene assembly move, we construct exactly $2\tilde{\beta}(B)$ new bizenes of each type $B$ from the inner half of the outer reservoir.  From the definition of $\tilde{B}$ we may now glue up all the bizenes using the 8-prism move.  By the construction of $\Lambda_6$, it follows that $\tilde{\mu}_7=2\tilde{\mu}_6-2\beta=2\kappa$ and so is cubical.

Since the total mass of bicrowns was $O(1/N)$ in $\Lambda_6$, the same is true in $\Lambda_7$.
\end{proof}

The next lemma completes the proof of the Thin Spine Theorem, except for a small adjustment needed to correct co-orientation, in the case when $k>2$.

\begin{lemma}
From 3 copies of $\Lambda_7$ as above, we can construct a graph $\Lambda_8$ in which the unglued subgraph $\Upsilon_8$ is empty.
\end{lemma}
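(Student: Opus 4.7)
The plan is to glue up the unglued subgraph of $3\Lambda_7$ in two stages: first, consume all bicrowns of the remainder via 12-prism moves, manufacturing any missing bicrown types on demand by bicrown assembly from the football reservoir; second, consume all remaining football bubbles via cube moves. The essential reason this works is the cubicality of $\tilde\mu_7$: viewing each bicrown of covering type $\widetilde B$ as ``three bubbles of type $B$'', the total bubble-equivalent distribution is an integer combination of cube push-forwards. Passing to $3\Lambda_7$ triples every count, so the bicrown supply is divisible by $3$, matching both the batch size of bicrown assembly (nine bubbles in, three bicrowns out) and the $3{:}1$ covering of a 12-prism over a cube.

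In detail, I will fix an integral cubical decomposition $\tilde\mu_7 = \sum_C n_C\,\iota_*(C)$ over cube types $C$, with $\iota_*(C)$ the four-tuple of football types that $C$ glues, and refine it to an assignment of each original bicrown to a specific cube $C$; write $\mu_{7,C}(\widetilde B)$ for the number of bicrowns of type $\widetilde B$ so assigned. Because a bicrown covers three football slots, $3\mu_{7,C}(\widetilde B) \le n_C$ for each $B$ in $C$. I will set $p_C := 3\max_{B \in C}\mu_{7,C}(\widetilde B)$, which is an integer multiple of $3$ satisfying $p_C \le n_C$, and then in $3\Lambda_7$ perform $p_C$ 12-prism moves over each cube $C$, each move consuming one bicrown of each of $C$'s four covering types. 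The deficit $p_C - 3\mu_{7,C}(\widetilde B)$ for each $B \in C$ (a nonnegative multiple of $3$) is made up by bicrown assembly from bubbles of type $B$ drawn from the inner half of the outer reservoir.

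A short accounting using $\tilde\mu_7(B) = \mu_7(B) + 3\mu_7(\widetilde B)$ confirms that the bubbles consumed by assembly and by the subsequent cube moves together exactly exhaust the tripled bubble inventory of each football type; the bubbles remaining after the 12-prism stage form the integer cubical distribution $\sum_C 3(n_C - p_C)\,\iota_*(C)$, which can be glued up by performing one cube move per remaining cube instance. The upshot is $\Lambda_8$ with $\Upsilon_8 = \varnothing$.

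The main obstacle is the combinatorial bookkeeping: a 12-prism move demands four bicrowns whose covering types match the four diagonals of a common cube, whereas bicrown assembly only produces three identical bicrowns at a time. Tripling in $3\Lambda_7$ is precisely what reconciles these divisibility constraints. The lower bound $m(\Omega_7)/m(L) \ge O(1)$ together with pseudorandomness ensures that enough bubbles of each required type are available in the inner half of the outer reservoir, and since this is a single round of operations the slack satisfies $\theta_8 \ge \theta_7/2$ and is not exhausted.
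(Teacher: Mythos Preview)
Your plan is essentially the same strategy as the paper's: take three copies, dispose of all bicrowns via bicrown assembly plus 12-prism moves, observe that the football bubbles left over still form a cubical distribution, and finish with cube moves. The accounting you give---that the remaining bubbles are $\sum_C 3(n_C-p_C)\,\iota_*(C)$---is correct once your assignment exists.

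The paper's argument is organized a bit more simply, and the comparison is worth noting. Rather than fixing an integral decomposition $\tilde\mu_7=\sum_C n_C\,\iota_*(C)$ in advance and then assigning each existing bicrown to a cube $C$ subject to the capacity constraint $3\mu_{7,C}(\widetilde B)\le n_C$, the paper just takes each existing bicrown $\widetilde{B}_0$, invokes Lemma~\ref{lem: Gluing bicrowns} to produce three companion covering types $\widetilde{B}_1,\widetilde{B}_2,\widetilde{B}_3$ completing a 12-prism, assembles those companions from the outer reservoir, and glues. The key observation is then that the total effect on $\tilde\mu$ of each such 12-prism is to subtract $3\iota_*(C)$ for the underlying cube $C$, so the residual bubble distribution is $3\tilde\mu_7$ minus a cubical quantity and hence still cubical. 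This sidesteps your assignment step entirely.

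Your version is not wrong, but the sentence ``because a bicrown covers three football slots, $3\mu_{7,C}(\widetilde B)\le n_C$'' is not a proof that such an assignment exists: it is the \emph{constraint} you want the assignment to satisfy, and its feasibility is a small transportation problem that you have not solved. It does hold---because the total bicrown mass is $O(1/N)$ while each $n_C$ is of order $m(L)$, there is ample slack---but you should say so explicitly. The paper's route avoids having to make this argument at all.
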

\begin{proof}
For each bicrown $\widetilde{B}_0$ (of covering type), there exist bicrowns of covering type $\widetilde{B}_i$ (where $i=1,2,3$) such that the $\widetilde{B}_i$ for $i=0,1,2,3$ can be glued up using a 12-prism move. Three copies of each of these $\widetilde{B}_i$ can in turn be constructed from three consecutive copies of bubbles $B_i$, using the bicrown assembly move from Lemma \ref{lem: Gluing bicrowns}.   Let $\alpha$ be the distribution on bubble types that, for each bicrown of type $\widetilde{B}_0$ in the remainder, counts three bubbles of each type $B_i$.  Note that, because all the bicrowns are of covering type and the 12-prism move covers the cube move, the distribution $\alpha$ is cubical.

The partially glued graph $\Lambda_8$ is constructed from three copies of $\Lambda_7$.  Since the mass of the remainder $\Delta_7$ is bounded above by
\[
\frac{m(\Delta_7)}{m(\Lambda_7)}\leq O(1/N)
\]
whereas the mass of the outer reservoir $\Omega^o_7$ is bounded below by
\[
\frac{m(\Omega^o_7)}{m(\Omega_7)}\geq \theta_7\geq O(1)~,
\]
for $N$ sufficiently large we may use bicrown assembly to construct three times the number of bicrowns needed to glue up the remainder, using football bubbles from the outer reservoir.  We can then use 12-prism moves to glue up all the bicrowns.

The distribution of the remaining football bubbles  is still cubical, and so they can also be glued up with cube moves.
\end{proof}

\subsection{The co-orientation condition}\label{subsection:coorientation}

The result of all this gluing is to produce $f:L \to Z$ which is degree 3, and whose local model
at every vertex of $Z$ is good. What remains is to check that the construction can be done while
satisfying the co-orientation condition. The obstruction to this condition can be thought of as
an element of $H^1(L;\Z/2\Z)$. Since $L$ has a bounded number of components,
it should not be surprising that we can adjust the gluing by local moves to ensure the 
vanishing of the co-orientation obstruction. In fact, it is easier to arrange this after
taking 2 disjoint copies of $L$, and possibly performing a finite number of moves, which we
now describe.

After the first gluing step, we trivialize the $I$-bundles (in an arbitrary way) 
along the preimage of each of the short segments. This trivialization determines a 
{\em relative} co-orientation cocycle on each football bubble or component of the remainder;
we refer to this relative cocycle as a {\em framing}. The set of framings of each
component $\gamma$ is a torsor for $H^1(\gamma;\Z/2\Z)$; the four possible framings of a 
football bubble are depicted in Figure~\ref{framed_bubbles}.

\begin{figure}[htpb]
\labellist
\small\hair 2pt
\endlabellist
\centering
\includegraphics[scale=1]{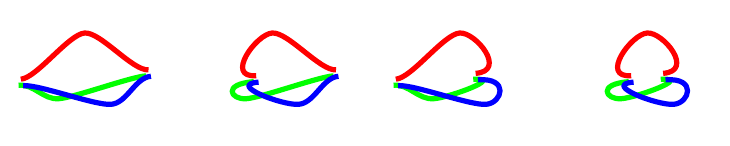}
\caption{Four framings on a football bubble}\label{framed_bubbles}
\end{figure}

Subsequent moves all make sense for framed bubbles, bizenes, bicrowns and so on. Each gluing
move can be done locally in a way which embeds in $\R^3$ (embeddings are illustrated in the figures
throughout the last few sections); such an embedding determines a
{\em move framing} on each of the pieces. The difference between a given framing and the
move framing determines a class in $H^1(\gamma;\Z/2\Z)$ for each piece, and the sum over
all such pieces is the (global) co-orientation cocycle in $H^1(L;\Z/2\Z)$.

There is a very simple procedure to adjust this global co-orientation cocycle, which we
now describe. Suppose we have a pair of footballs $\gamma$ and $\gamma'$ with the same
3 labels, but with framings which differ by a single reflection at a vertex (i.e.\/ they
are of the first two types depicted in Figure~\ref{framed_bubbles}). Swapping
$\gamma$ and $\gamma'$ in two cube moves that they participate in adjusts the cocycle six
times, once for each of the six edges in the two bubbles; we call this a {\em swap} move. 
If three of these edges are in
a component $L_i$, and three in $L_j$, then the global change to the cocycle is to add
a fundamental class of $H^1(L_i \cup L_j;\Z/2\Z)$. If we performed our original gluing
randomly, every component $L_i$ should contain many pairs of footballs with framings
which differ in this way. So if we take two disjoint copies of $L \to Z$, we can trivialize
the co-orientation cocycle by finitely many such swaps. 
This duplication multiplies the total number of components of $L$ by a factor of 2.

This completes the proof of the Thin Spine Theorem~\ref{theorem:thin_spine_theorem}, at least
when $k\ge 3$.

\subsection{Rank 2}\label{rank_2_subsection}

The move described in \S~\ref{subsection:supercompatible} to deal with an excess of
$O(\epsilon)$ long strips requires rank $k\ge 3$, so that long
strips can be grouped into super-compatible 4-tuples if necessary. In this section we briefly
explain how to finesse this point in the case $k=2$.

Fix some constant $C$ with $1\ll C \ll N$; $C$ will need to satisfy some divisibility
properties in what follows, but we leave this implicit. Define a
{\em pocket} to be three equal segments of length $3(C+2)\lambda$ which can be glued compatibly.
Recall at the very first step of our construction that we glued compatible long strips in 
triples by bunching sticky segments to form bubbles. We modify this construction slightly
by also allowing ourselves to create some small mass of bunched pockets. That is, we bunch triples
of long strips of the form
$$a_0 x_1 a_2 x_3\cdots a_{N'}, \quad b_0 x_1 b_2 x_3 \cdots b_{N'}, \quad c_0 x_1 c_2 x_3 \cdots c_{N'}$$
if for even $i$ the letters adjacent to each $x_{i+1}$ or $x_{i-1}$ disagree, where each
$a_i$, $b_i$ or $c_i$ has length $3\lambda$, and where each $x_j$ {\em either} has length 
$3\lambda$, or has length $3(C+2)\lambda$. We insist that the proportion of $x_j$ of length
$3(C+2)\lambda$ is very small, so that most of the bunched triples are of length $3\lambda$,
but that some small mass of bunched pockets has also been created. Note that $N'$ will depend
on the number of ``long'' $x_j$, but in any case $N'$ will be quite close to $N$.

We arrange by pseudorandomness
that the mass of bunched pockets of every possible type is $O(\epsilon)$, but with a constant
such that this mass is definitely larger than the number of long strips that will remain
unbunched after the first step.

Now consider a long strip $\sigma$ left unbunched after the first step. 
We partition this strip in a different way as
$$\sigma:=e_0 z_1 e_2 z_3 \cdots e_M$$
where each $e_j$ has length $3\lambda$, and each $z_j$ has length $C\lambda$.

We take 9 copies of $\sigma$. Fix an index $j$. 
Suppose we have two sets of three bunched triples of pockets (hence 18 pockets in
all) of the form
$$a_i\alpha_i x_i \beta y_i \gamma_i r_i, \; b_i\alpha_i x_i \beta y_i \gamma_i s_i, \;
c_i\alpha_i x_i \beta y_i \gamma_i t_i \text{ bunched along the pocket } \alpha_ix_i\beta y_i\gamma_i$$
$$a_i'\alpha_i' x_i' \beta' y_i' \gamma_i' r_i', \; b_i'\alpha_i' x_i' \beta' y_i' \gamma_i' s_i', \;
c_i'\alpha_i' x_i' \beta' y_i' \gamma_i' t_i'\text{ bunched along the pocket } \alpha_i'x_i'\beta' y_i'\gamma_i'$$
for each of $i=1,2,3$, and satisfying
\begin{enumerate}
\item{each of $a_i,b_i,c_i,r_i,s_i,t_i,x_i,y_i$ and their primed versions have length $3\lambda$;}
\item{each of $\alpha_i,\beta,\gamma_i$ and their primed versions have length $C\lambda$;}
\item{$a_1,a_2,a_3$ all end with the same letter and $r_1,r_2,r_3$ all start with the
same letter, and similarly for $b_i,c_i,s_i,t_i$ and the primed versions;}
\item{$a_i,b_i,c_i$ end with different letters and $r_i,s_i,t_i$ start with different letters for
each fixed $i$, and similarly for the primed versions;}
\item{$x_1,x_2,x_3$ start with different letters and end with the same letter and similarly
for the primed versions;}
\item{$y_1,y_2,y_3$ end with different letters and start with the same letter and similarly for
the primed versions;}
\item{$\alpha_1,\alpha_2,\alpha_3$ can be partitioned into an odd number of segments 
of length $3\lambda$ which can be compatibly bunched creating a strip of alternate short segments 
and bubbles, and similarly for the $\gamma_i$ and the primed versions;}
\item{the common last letter of the $x_i$ is different from the common last letter of the $x_i'$
and from the last letter of $e_j$, and
the common first letter of the $y_i$ is different from the common first letter of the $y_i'$ and
from the first letter of $e_{j+2}$; and}
\item{$\beta$, $\beta'$ and $z_j$ can be partitioned into an odd number of segments of length 
$3\lambda$ which can be compatibly bunched creating a strip of alternate short segments
and bubbles.}
\end{enumerate}

Under these hypotheses, we can pull apart the six bunched pockets, bunch the $\alpha_i$ 
in short strips (and similarly bunch the $\alpha_i'$), bunch the $\gamma_i$ in short strips
(and similarly bunch the $\gamma_i'$), and finally bunch the three sets of
$\beta$, $\beta'$ and $z_j$ in short strips.
Explicitly, we are creating bunched segments of length $C\lambda$ of the following kinds:
$$a_1\alpha_1 x_1,\; b_2 \alpha_2 x_2, \; c_3 \alpha_3 x_3;\quad b_1\alpha_1 x_1,\; c_2 \alpha_2 x_2, \; a_3 \alpha_3 x_3; \quad c_1\alpha_1 x_1,\; a_2 \alpha_2 x_2, \; b_3 \alpha_3 x_3;$$
$$a_1'\alpha_1' x_1',\; b_2' \alpha_2' x_2', \; c_3' \alpha_3' x_3';\quad b_1'\alpha_1' x_1',\; c_2' \alpha_2' x_2', \; a_3' \alpha_3' x_3'; \quad c_1'\alpha_1' x_1',\; a_2' \alpha_2' x_2', \; b_3' \alpha_3' x_3';$$
$$y_1\gamma_1 r_1,\; y_2\gamma_2 s_2, \; y_3\gamma_3 t_3; \quad y_1\gamma_1 s_1,\; y_2\gamma_2 t_2, \; y_3\gamma_3 r_3; \quad y_1\gamma_1 t_1,\; y_2\gamma_2 r_2, \; y_3\gamma_3 s_3;$$
$$y_1'\gamma_1' r_1',\; y_2'\gamma_2' s_2', \; y_3'\gamma_3' t_3'; \quad y_1'\gamma_1' s_1',\; y_2'\gamma_2' t_2', \; y_3'\gamma_3' r_3'; \quad y_1'\gamma_1' t_1',\; y_2'\gamma_2' r_2', \; y_3'\gamma_3' s_3';$$
and finally three copies of
$$x_i\beta y_i, \; x_i'\beta'y_i', \; e_j z_{j+1} e_{j+2}$$
for each of $i=1,2,3$.

If we do this for each $z_j$ in turn, then the
net effect is to pair up all the extra long strips, at the cost of creating a new
remainder of mass $O(\epsilon)$, and using up mass $O(\epsilon)$ of the bunched pockets.

At the end of this step every vertex of the new remainder created is adjacent to
a strip of $C/3$ consecutive short bubbles; because of this, there is ample slack to apply
tear moves to the new remainder as in \S~\ref{subsection:tear}. Note that this move requires
us to take nine copies of each excess long strip; thus we might have to take a total of
5,832 copies of $L$ instead of 648 for $k\ge 3$. The rest of the argument goes through as above.
This completes the proof in the case $k=2$ and thus in general.

\section{Bead decomposition}\label{bead_decomposition_section}

The next step of the argument is modeled very closely on \S~5 from \cite{Calegari_Walker}. For the
sake of completeness we explain the argument in detail. Throughout this section we fix a free group
$F_k$ with $k\ge2$ generators and we let $r$ be a random cyclically reduced word of length $n$, and
consider the one-relator group $G:=\langle F\; | \; r \rangle$ with presentation complex $K$. 
Using the Thin Spine Theorem, we will construct (with overwhelming probability)
a spine $f:L \to Z$ over $K$ for which every edge of $Z$ has length at least $\lambda$, 
for some big $\lambda$. The main result of this section is that if this construction is done
carefully, the immersion $\overline{M}(Z) \to K$ will be $\pi_1$-injective, again with
overwhelming probability.

\subsection{Construction of the beaded spine}

A random 1-relator group satisfies the small cancellation property $C'(\mu)$ for every
positive $\mu$, with overwhelming probability. So to show that $\overline{M}(Z) \to K$ is
$\pi_1$-injective, it suffices to show that for any sufficiently long immersed segment 
$\gamma \to Z$ whose image in $X$ under $g:Z \to X$ lifts to $r$, it already lifts to $L$.
Informally, the only long immersed segments in $Z$ which are ``pieces'' of $r$ are those
that are in the image of segments of $L$ under $f:L \to Z$.

Let $Z$ be a 4-valent graph with total edge length $|Z|=O(n)$, in which every edge has length 
$\ge \lambda$. For any $\ell$, there are at most $|Z|\cdot 3^{\ell/\lambda}$ immersed paths in $Z$ of 
length $\ell$. Thus if $\ell$ is of order $n^\alpha$, and $\lambda \gg 1$, we would not expect to find any
paths of length $\ell$ in common with an {\em independent} random relator $r$ of length $n$,
for any fixed $\alpha > 0$, with probability $1-O(e^{-n^c})$ for some $c$ depending on $\alpha$.

There is a nice way to express this in terms of {\em density}; or {\em degrees of freedom},
which is summarized in the following {\em intersection formula} of Gromov; see
\cite{Gromov}, \S~9.A for details:

\begin{proposition}[Gromov's intersection formula]\label{proposition:intersection}
Let $C$ be a finite set. For a subset $A$ of $C$ define the (multiplicative) 
{\em density} of $A$, denoted $\density(A)$, to be
$\density(A):=\log{|A|}/\log{|C|}$. 
If $A_1$ is any subset of $C$, and $A_2$ is a random subset of $C$
of fixed cardinality, chosen independently of $A_1$, then with probability
$1-O(|C|^{-c})$ for some $c>0$, there is an equality
$$\density(A_1 \cap A_2) = \density(A_1) + \density(A_2) - 1$$
with the convention that $\density < 0$ means a set is empty.
\end{proposition}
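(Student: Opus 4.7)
The plan is to treat $X := |A_1 \cap A_2|$ as a sum of indicators and extract both the expected scale and its concentration by standard tools. Write $N = |C|$, $N_i = |A_i|$, and $d_i = \density(A_i) = \log N_i / \log N$. Expanding $X = \sum_{c \in A_1} \mathbf{1}[c \in A_2]$, linearity of expectation immediately gives
$$E[X] \;=\; N_1 \cdot \frac{N_2}{N} \;=\; N^{\,d_1 + d_2 - 1},$$
so the density of the mean is precisely the quantity $d_1 + d_2 - 1$ named in the statement. The proof then splits into two regimes according to the sign of $d_1 + d_2 - 1$.

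In the regime $d_1 + d_2 - 1 < 0$, the mean $E[X] = N^{d_1+d_2-1}$ tends to zero polynomially. Markov's inequality gives $P(X \ge 1) \le E[X] = N^{d_1+d_2-1}$, which is the claimed polynomially small failure probability; on the complementary event $A_1 \cap A_2 = \varnothing$, which matches the stated convention.

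In the regime $d_1 + d_2 - 1 > 0$, $E[X]$ is a positive power of $N$, and I need concentration. The indicators $\mathbf{1}[c \in A_2]$ are not independent because $A_2$ has fixed cardinality, but $X$ is hypergeometrically distributed and Hoeffding's reduction shows that sums of hypergeometric indicators obey the same Chernoff-type bound as sums of independent Bernoullis. Hence for any small constant $\delta > 0$,
$$P\bigl(|X - E[X]| > \delta E[X]\bigr) \;\le\; 2\exp\bigl(-c\,\delta^2 E[X]\bigr)$$
for some absolute $c>0$, and because $E[X]$ is a positive power of $N$ this is superpolynomially small in $N$. On the complementary event $X = N^{d_1+d_2-1}(1 + O(\delta))$, which is exactly the claimed equality of densities up to a negligible additive error in $\density$.

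The main obstacle is the non-independence of the indicators in the second regime; I expect to handle it by invoking Hoeffding's hypergeometric Chernoff inequality (rather than trying to write out a second-moment or martingale argument from scratch). A minor technical point is the near-boundary case $d_1 + d_2 - 1$ very close to zero, where the mean is only mildly growing or decaying, but in this range either Markov (for the vanishing side) or the Chernoff bound applied with a slightly larger $\delta$ still gives the desired conclusion with polynomial failure probability, so no new ideas are needed.
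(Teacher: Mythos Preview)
Your proposal is correct and is precisely the elementary Chernoff-type argument the paper alludes to. The paper itself does not supply a proof of this proposition: it merely remarks that the probability estimate is an elementary consequence of Chernoff's inequality and refers the reader to Gromov and to \S~2.4 of the Calegari--Walker random rigidity paper for the details, so your write-up in fact fills in exactly what the paper leaves implicit.
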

Note that Gromov does not actually estimate the probability that his formula holds, but this
is an elementary consequence of Chernoff's inequality. For a proof of an analogous estimate,
which explicitly covers the cases of interest that we need, 
see \cite{Calegari_Walker_RR}, \S~2.4.

In our situation, taking $\ell = n^\alpha$, we can take $C$ to be the set of all reduced
words in $F_k$ of length $\ell$, which has cardinality approximately $(2k-1)^{n^\alpha}$.
If $\lambda \gg 1$, then the set of immersed paths in $Z$ of length $\ell$ has density as
close to $0$ as desired; similarly, the set of subwords of a random word $r$ of length $n$
has density as close to $0$ as desired. Thus {\em if these subwords were independent},
Gromov's formula would show that they were disjoint, with probability $1-O(e^{-n^c})$.

Of course, the thin spines $Z$ guaranteed by the Thin Spine Theorem are hardly independent of $r$.
Indeed, {\em every} subpath of $r$ appears in $L$, and therefore in $Z$! Thus, we must work
harder to show that these subpaths (those that are already in $L$) amount to all the intersection.
The idea of the bead decomposition is to subdivide 
$r$ into many subsets $b_i$ of length $n^{1-\delta}$ (for some fixed $\delta$), 
to build a thin spine $Z_i$ ``bounding'' the subset $b_i$, and then to argue that 
no immersed path in $Z_i$ of length $n^\alpha$ can be a piece in any $b_j$ with $i \ne j$.

Fix some small positive constant $\delta$, and write $r$ as a product
$$r = r_1 s_1 r_2 s_2 \cdots r_m s_m$$
where each $r_i$ has length $n^{1-\delta}$ and each $s_i$ has length approximately
$n^\delta$ (the exact values are not important, just the order of magnitude). Thus $m$ is
approximately equal to $n^{\delta}$; we further adjust the lengths of the $r_i$ and $s_i$
slightly so that $m$ is divisible by $3$. 

We say a reduced word $x$ has {\em small self-overlaps} if the length of the biggest proper
prefix of $x$ equal to a proper suffix is at most $|x|/3$. Almost every reduced word
of fixed big length has small overlaps.
Fix some positive constant $C < \delta/\log(2k-1)$, and for each index $i$ mod $m/3$ we look
for the first triple of subwords of the form
$a_1xa_2,b_1xb_2,c_1xc_2$ in $s_i,s_{i+m/3},s_{i+2m/3}$ such that
\begin{enumerate}
\item{the $a_i,b_i,c_i$ are single edges;}
\item{$a_1,b_1,c_1$ are distinct and $a_2,b_2,c_2$ are distinct;}
\item{$x$ has length $C\log{n}$ with $C$ as above; and}
\item{$x$ has small self-overlaps.}
\end{enumerate}
Actually, it is not important that $x$ has length exactly $C\log{n}$; it would be fine for
it to have length in the interval $[C\log{n}/2,C\log{n}]$, for example.
Any reduced word of length $C\log{n}$ with $C<\delta/\log(2k-1)$ will appear many times
in any random reduced word of length $n^\delta$, with probability $1-O(e^{-n^c})$ for some
$c$ depending on $\delta$. See e.g.\/ \cite{Calegari_Walker_RR}, \S.~2.3.
Then for each index $i$ mod $m/3$, the three copies of $x$ can be glued to produce
unusually long bunched triples $l_i$ that we call {\em lips}. The lips partition the remainder
of $r$ into subsets which we denote $b_i$, where the index $i$ is taken mod $m/3$, so that
each $b_i$ is the union of three segments of length approximately equal to $n^{1-\delta}$
consisting of $r_i,r_{i+m/3},r_{i+2m/3}$ together with the part of the adjacent $s_j$
outside the lips. We call the $b_i$ {\em beads}, and we call the partition of $r$ minus 
the lips into beads the {\em bead decomposition}.

Now we apply the Thin Spine Theorem to build a thin spine $f:L \to Z$ such that
\begin{enumerate}
\item{$L$ consists of 648 copies of $r$ (or 5,832 copies if $k=2$);}
\item{$Z$ is cyclically subdivided by the lips $l_i$ into connected subgraphs $Z_i$;}
\item{the 648 copies of $b_i$ in $L$ are precisely the part of $r$ mapping to the $Z_i$,
and the remainder of $L$ consists of segments mapping to the lips as above.}
\end{enumerate}
We call the result a {\em beaded spine}.

\begin{lemma}[No common path]\label{lemma:no_common_path_ij}
For any positive $\alpha$, we can construct a beaded spine with the property that 
no immersed path in $Z_i$ of length $n^\alpha$ can be a piece in any $b_j$ with $i\ne j$
mod $m/3$, with probability $1-O(e^{-n^c})$, where $c$ depends on $\alpha$.
\end{lemma}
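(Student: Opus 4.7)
The plan is to run a Gromov-style intersection argument, exploiting the essential independence between different beads in the decomposition of $r$. The first observation is that the thin spine construction restricted to $Z_i$ is a local function of the letters of $b_i$ together with a uniformly bounded amount of boundary data from the two adjoining lips (of length $O(\log n)$). Thus $Z_i$ is measurable with respect to the $\sigma$-algebra generated by $b_i$ and this short lip data. On the other hand, for $i \ne j \pmod{m/3}$, the beads $b_i$ and $b_j$ come from disjoint stretches of the random reduced word $r$, so up to conditioning on a bounded number of boundary letters (needed to enforce the reduced-word constraint) they are independent. In particular, conditional on $Z_i$, the word $b_j$ remains uniformly distributed on reduced words of length $\sim n^{1-\delta}$ with prescribed boundary letters.

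The second step is to count both sides. Set $\ell := n^\alpha$, and let $C$ denote the set of reduced words of length $\ell$, so $|C| \sim (2k-1)^\ell$. Because $Z_i$ is $4$-valent with every edge of length at least $\lambda$, the number of immersed paths of length $\ell$ in $Z_i$ is at most $|Z_i| \cdot 3^{\ell/\lambda} \le O(n^{1-\delta}) \cdot 3^{n^\alpha/\lambda}$, which has density in $C$ bounded by $\log 3 / (\lambda \log(2k-1))$ and is therefore as close to $0$ as we like (by taking $\lambda$ large). Similarly, the set of length-$\ell$ subwords of $b_j$ has cardinality at most $|b_j| = O(n^{1-\delta})$, and so has density tending to $0$ in $C$. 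Since the sum of these densities is strictly less than $1$, Gromov's intersection formula (Proposition~\ref{proposition:intersection}) applied to this pair of subsets of $C$ gives that, conditional on $Z_i$, the two subsets are disjoint with probability $1 - O(|C|^{-c'}) = 1 - O(e^{-c'' n^\alpha})$ for some $c'' > 0$.

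Finally, take a union bound over the $O((m/3)^2) = O(n^{2\delta})$ pairs $(i,j)$ with $i \ne j$. Because the per-pair failure probability is exponentially small in $n^\alpha$ while the number of pairs is only polynomial in $n$, the total failure probability is still $O(e^{-n^c})$ for some $c > 0$ depending on $\alpha$. The main obstacle is the independence step: since $r$ is a reduced word, not an i.i.d.\ string, disjoint subwords are not literally independent, but the standard Markov description of a uniformly random reduced word in $F_k$ makes the conditional distribution on $b_j$ (given $b_i$ and boundary letters) a reduced-word distribution to which Gromov's formula applies, essentially as in Proposition~\ref{proposition:intersection} and \cite{Calegari_Walker_RR}. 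A secondary but purely bookkeeping point is to confirm from the construction in Section~\ref{section:thin_spine_section} that $Z_i$ really is determined by the letters of $b_i$ and the lip data, which is immediate because the lips were fixed before the spine was built and the subsequent gluing moves are local.
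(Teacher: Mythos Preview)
Your argument is correct and follows essentially the same approach as the paper: both use the thinness of $Z_i$ to bound the density of immersed paths, the independence of $b_j$ from $b_i$ (for $i\ne j$), and Gromov's intersection formula to conclude. The paper is slightly more explicit that the beaded spine is \emph{constructed} by applying the Thin Spine Theorem separately to each pseudorandom $b_i$ (rather than arguing after the fact that a global construction restricts locally), which is what makes the independence claim immediate; your final paragraph gestures at this but the paper's formulation is the cleaner way to justify it.
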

\begin{proof}
The construction of a beaded spine is easy: with high probability, the labels on each $b_i$
are $(T,\epsilon)$-pseudorandom for any fixed $(T,\epsilon)$, and we can simply apply 
the construction in the Thin Spine Theorem to each $b_i$ individually to build $Z_i$, 
and correct the co-orientation once at the end by a local modification in $Z_1$ (say). 

By the nature of the bead decomposition, the $b_i$ are independent of each other. By
thinness, there are $O(n\cdot 3^{\ell/\lambda})$ immersed paths in $Z_i$ of length 
$\ell$. For any fixed positive $\alpha$, if we set
$\ell = n^\alpha$, and choose $\lambda$ big enough, then the density of this set of
paths (in the set of all reduced words of length $\ell$) is as close to $0$ as we like.
Similarly, the set of subwords in $b_j$ of length $\ell$ has density as close to $0$
as we like for big $n$. But now these subwords are {\em independent} of the immersed
paths in $Z_i$, so by the intersection formula (Proposition~\ref{proposition:intersection}), 
there are no such words in common, with probability $1-O(e^{-n^c})$.
\end{proof}

\subsection{Injectivity}

We now show why a beaded spine gives rise to a $\pi_1$-injective map of a 
3-manifold $\overline{M}(Z) \to K$. First we prove another lemma, which is really the
key geometric point, and will be used again in \S~\ref{endgame_section}:

\begin{lemma}[Common path lifts]\label{lemma:common_path_lifts}
For any positive $\beta$, with probability $1-O(e^{-n^c})$
we can construct a beaded spine $L \to Z \to X$
with the property that any immersed segment $\gamma \to Z$ with $|\gamma|=\beta n$
whose image in $X$ under $Z \to X$ lifts to $r$ or $r^{-1}$, already lifts to $L$.
\end{lemma}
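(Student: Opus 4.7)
The plan is to show that $v_0:=\gamma(0)$ admits a preimage in $L$ at position $p$ in one of the $216$ copies of $r$, from which the equality $\gamma=f(L_{c}|_{[p,p+\beta n]})$ will follow by the uniqueness of immersed paths in $Z$ with prescribed labels. To set this up I will first invoke pseudorandomness of $r$ in two forms, both proved via Proposition~\ref{proposition:intersection} in exactly the manner of Lemma~\ref{lemma:no_common_path_ij}: with probability $1-O(e^{-n^{c}})$, (a) no length-$n^\alpha$ subword of $r$ appears at two distinct positions, and (b) no length-$n^\alpha$ forward subword of $r$ equals a forward subword of $r^{-1}$, for some small fixed $\alpha>0$. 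Both hold a fortiori at scale $\beta n\gg n^\alpha$. I then reduce to the case in which $\gamma$'s image lifts to $r$ (the $r^{-1}$ case is symmetric) and, after a short basepoint shift, to the case in which $v_0$ lies interior to an edge $e_0$ of $Z$, with exactly three preimages in $L$.

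The heart of the argument is to show that these three preimages, written $(c_i,q_i)\in L_{c_i}$ for $i=1,2,3$, all lie at position $p$ with forward $L$-direction mapping to $\gamma$'s direction at $v_0$. Because the thin spine construction identifies arcs of $L$ only via matching oriented labels, the three preimages of $e_0$, and hence of $v_0$, all map forward in $L$ to a common direction $\delta$ along $e_0$. Consequently the forward $L$-label sequences $r[q_i,q_i+\beta n]$ all equal the $Z$-labels in direction $\delta$, so (a) forces $q_1=q_2=q_3$; call this common value $q_0$. If $\delta$ were opposite to $\gamma$'s direction, the backward $Z$-labels from $v_0$ would have to equal $\gamma$'s forward labels $r[p,p+\beta n]$; but the backward labels are determined by the construction of $Z$ from the copies of $r$ in $L$ and form the sequence $r[q_0-1-t]^{-1}$, so this would give a long common subword between $r$ and $r^{-1}$, contradicting (b). Hence $\delta$ is $\gamma$'s direction, and matching labels $r[q_0,q_0+\beta n]=r[p,p+\beta n]$ forces $q_0=p$ by (a). Setting $\eta:=L_{c_1}|_{[p,p+\beta n]}$, both $f\circ\eta$ and $\gamma$ are immersed paths in $Z$ sharing labels and starting point $v_0=f(c_1,p)$, so $f\circ\eta=\gamma$ by the immersion property of $Z\to X$; thus $\eta$ is the desired lift.

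The main obstacle is step two, in which pseudorandomness must be deployed simultaneously to rule out positional repetitions in $r$ and long common subwords between $r$ and $r^{-1}$. Both are standard applications of the intersection formula, but $\alpha$ must be chosen to balance the scale $\beta n$ against the engineered lip repetitions of length $\approx C\log n$; since $\beta n\gg\log n$ this is not a genuine difficulty, and once the preimage at position $p$ is located the lift is forced by the rigidity of the immersion $Z\to X$.
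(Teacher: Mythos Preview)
Your argument has a genuine gap at the step where you conclude $q_1=q_2=q_3$. You write that ``the forward $L$-label sequences $r[q_i,q_i+\beta n]$ all equal the $Z$-labels in direction $\delta$,'' but there is no single well-defined $Z$-label of length $\beta n$ in direction $\delta$: the graph $Z$ branches at every vertex, and the three paths $f(L_{c_i}|_{[q_i,\cdot]})$ diverge at the first vertex beyond $e_0$ (this is exactly what the good local model enforces). The only agreement you actually have is along the single edge $e_0$, whose length is at most a fixed constant of order $\lambda$, far below the scale $n^\alpha$ at which your property (a) applies. Worse, the conclusion $q_1=q_2=q_3$ is simply false: the entire thin spine construction works by gluing together segments of $r$ at \emph{different} cyclic positions that happen to carry the same short label, so the three preimages of $v_0$ lie at three distinct positions in $r$ by design. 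Even the weaker assertion that \emph{some} $q_i$ equals $p$ is the heart of the lemma and cannot be extracted from local information at $v_0$.

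The paper's proof is structurally different and makes essential use of the bead decomposition, which your argument ignores. One uses the map $f_L:\gamma\to L$ (coming from the hypothesis that the label of $\gamma$ is a subword of $r$) to cut $\gamma$ into pieces $\gamma_j$ landing in the beads $b_j$. Lemma~\ref{lemma:no_common_path_ij}, which depends on the \emph{independence} of distinct beads, forces $f_Z(\gamma_j)$ to land in $Z_j$ up to error $n^\alpha$; hence near each bead boundary $f_Z$ must cross the corresponding lip, and the lip word $x_j$ appears in a short window $\sigma$ under both $f_Z$ and $f_L$. Since $x_j$ has small self-overlaps and $\sigma$ is short, with high probability these two occurrences coincide, pinning $f_Z$ to the composition $L\to Z$ on $\sigma$ and hence (by the immersion property) on all of $\gamma$. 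The beads and lips are precisely the mechanism that propagates local agreement to global agreement; without them there is no way to bridge the gap between the constant-length edge $e_0$ and the scale $n^\alpha$.
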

In words, this lemma says that any path in $r$ of length $\beta n$ which immerses in $Z$
lifts to $L$, and therefore appears in the boundary of a disk of $\overline{M}(Z)$.
\begin{proof}
The proof follows very closely the proof of Lemma~5.2.3 from \cite{Calegari_Walker}.

First, fix some very small $\alpha$ with $\alpha/\log{2k-1} \ll C'$
where $C'\log{n}$ is the length of the lips in the beaded spine. This ensures that
a random word of length $n^\alpha$ is very unlikely to contain two copies of any
word of length $C'\log{n}$ with small self-overlaps; see e.g.\/ \cite{Calegari_Walker_RR}
Prop.~2.6 and Prop.~2.11 which says that the likelihood of this occurrence is 
$O(n^{-C})$ for some $C$.

Now, let $f_Z:\gamma \to Z$ be an immersed path of length $\beta n$ whose label is a subpath
of $r$ or $r^{-1}$; this means that there is another immersion 
$f_L:\gamma \to L$ such that the compositions $\gamma \to Z \to X$ and
$\gamma \to L \to X$ agree.

Using $f_L$, we decompose $\gamma$ into subpaths $\gamma_j$ which are the preimages of
the segments of $b_j$ under $f_L$. Apart from boundary terms, each of these $\gamma_j$ has length 
approximately $n^{1-\delta}$. By Lemma~\ref{lemma:no_common_path_ij}, no $\gamma_j$ 
can immerse in $Z_i$ with $i \ne j$ mod $m/3$ unless $|\gamma_j| < n^\alpha$, 
and in fact $f_Z$ must therefore take all of $\gamma_j$
into $Z_j$ except possibly for some peripheral subwords of length
at most $n^\alpha$. 

But this means that for all $j$, there is a subpath $\sigma \subset \gamma$ centered at the
common endpoint of $\gamma_j$ and $\gamma_{j+1}$, with $|\sigma| \le n^\alpha$, for which
one endpoint maps under $f_Z$ into $Z_j$ and the other into $Z_{j+1}$. 
This means that $f_Z$ must map $\sigma$ over the lip of $Z$ 
separating $Z_j$ from $Z_{j+1}$, and must contain a copy of the
word $x_j$ on that lip. But under the map $f_L$, the word $\sigma$ contains another
copy of $x_j$. By our hypothesis on $\alpha$, the probability that $\sigma$
contains {\em two} copies of $x_j$ is $O(n^{-C})$. If these copies are the same, then
the composition $f_L:\gamma \to L \to Z$ and $f_Z:\gamma \to Z$ agree on $\sigma$. But
since $Z \to X$ is an immersion, it follows that $f_L:\gamma \to L \to Z$ and $f_Z:\gamma \to Z$
must agree on {\em all} of $\gamma$; i.e.\/ that $f_Z$ lifts to $L$, as claimed. 

So the lemma is proved unless there are two distinct
copies of $x_j$ within distance $n^\alpha$ of each lip in the image of $\gamma$.
Since $\gamma$ has length $\beta n$, there are $n^\delta$ such lips; the probability of two
distinct copies for each lip is $O(n^{-C})$, and the probabilities for distinct lips are
independent, so the total probability is $O(e^{-n^c})$ and we are done.
\end{proof}

An immediate corollary is the existence of 3-manifold subgroups in random
1-relator groups:

\begin{proposition}\label{proposition:1_relator_injective}
Let $G=\langle F_k\;|\; r\rangle$ be a random 1-relator group where $|r|=n$. Then with
probability $1-O(e^{-n^c})$ we can produce a beaded spine $L \to Z \to X$ for which
the associated map $\overline{M}(Z) \to K$ is $\pi_1$-injective.
\end{proposition}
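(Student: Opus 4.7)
The plan is to combine the small cancellation property of random 1-relator groups with Lemma~\ref{lemma:common_path_lifts} via a van Kampen diagram reduction. With overwhelming probability $r$ satisfies $C'(\mu)$ for any prescribed $\mu > 0$ and is pseudorandom enough for the Thin Spine Theorem (applied bead-by-bead) and Lemma~\ref{lemma:common_path_lifts}; fix a resulting beaded spine $L \to Z \to X$, with edges of length $\geq \lambda$ for $\lambda$ large. Since every loop in $\overline{M}(Z)$ is freely homotopic in $\overline{M}(Z)$ to a non-backtracking loop in $Z$, it suffices to prove by induction on minimal van Kampen area that any non-backtracking loop $\gamma$ in $Z$ which is null-homotopic in $K$ is already null-homotopic in $\overline{M}(Z)$.

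Let $D \to K$ be a minimal area van Kampen diagram with boundary $\gamma$. If $D$ has no 2-cells then $\gamma$ is trivial in $F_k$; since $Z \to X$ is a graph immersion, $\pi_1(Z) \hookrightarrow \pi_1(X)$ is injective, and $\gamma$ is already null-homotopic in $Z$. Otherwise, Greendlinger's Lemma (valid since $\mu$ is arbitrarily small) produces a 2-cell $R$ of $D$ whose boundary meets $\partial D$ in an arc $\alpha$ of length greater than $(1 - 3\mu)n$ (in the case $D = R$, simply take $\alpha = \partial R$). The corresponding subpath $\sigma \subset \gamma$ is immersed in $Z$, since $\gamma$ is non-backtracking and $Z \to X$ is an immersion, and its label in $X$ is a subword of $r^{\pm 1}$ of length $\geq (1-3\mu)n$. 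Applying Lemma~\ref{lemma:common_path_lifts} with $\beta = 1 - 3\mu$, $\sigma$ lifts to a subarc $\tilde\sigma$ of some component $L_j \subset L$ and therefore lies on the boundary of the 2-cell $D_j \subset \overline{M}(Z)$ capping $L_j$.

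We then homotope $\gamma$ across $D_j$ inside $\overline{M}(Z)$, replacing $\sigma$ by the complementary arc $L_j \smallsetminus \tilde\sigma$ to produce a loop $\gamma'$ freely homotopic to $\gamma$ in $\overline{M}(Z)$. The labels of $\partial R \smallsetminus \alpha$ and $L_j \smallsetminus \tilde\sigma$ agree (both are the complementary subword of $r^{\pm 1}$), so the diagram $D$ with $R$ deleted is a valid van Kampen diagram of strictly smaller area with boundary $\gamma'$. By induction $\gamma'$, hence $\gamma$, is null-homotopic in $\overline{M}(Z)$. Union-bounding the failure events for the Thin Spine Theorem, the bead construction, Lemma~\ref{lemma:common_path_lifts}, and the $C'(\mu)$ condition keeps the total failure probability at $O(e^{-n^c})$.

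The main obstacle I expect is verifying that the Greendlinger subpath $\sigma$ genuinely immerses in $Z$ (so that Lemma~\ref{lemma:common_path_lifts} applies) and that the push across $D_j$ really occurs in $\overline{M}(Z)$ rather than only in $K$. Both are consequences of the fact that $\overline{M}(Z) \to K$ is a combinatorial immersion of 2-complexes, together with the non-backtracking assumption on $\gamma$; given this, the argument is a standard small-cancellation disk-reduction closely paralleling the surface group argument in \cite{Calegari_Walker}.
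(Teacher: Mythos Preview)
Your argument is correct and essentially identical to the paper's: both apply Greendlinger's Lemma to find a long boundary arc labeled by a piece of $r^{\pm 1}$, invoke Lemma~\ref{lemma:common_path_lifts} to lift it to $L$, and push across the corresponding $2$-cell of $\overline{M}(Z)$ to reduce the diagram. The paper packages the reduction as passing to an \emph{efficient} diagram and deriving a contradiction, while you phrase it as induction on area; the only cosmetic point you leave implicit (as does the paper) is that after the push the new loop $\gamma'$ may backtrack in $Z$ and must be tightened before reapplying the hypothesis.
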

\begin{proof}
Suppose not, so that there is an immersed loop $\gamma:S^1 \to Z$ 
which is nontrivial in $\pi_1(\overline{M}(Z))$, but trivial in $K$.
There is a van Kampen diagram $\D$ with boundary $\gamma$.
If $D$ is a disk in this diagram with some segment in common with $\gamma$, and if
$\partial D \to Z$ lifts to $L$, then $\partial D$ bounds a disk in $\overline{M}(Z)$, and
we can push $\D$ across $D$ by a homotopy, producing a diagram with fewer disks.
A diagram which does not admit such a simplification is said to be {\em efficient}; 
without loss of generality therefore
we obtain an efficient diagram whose boundary is an immersed loop $\gamma:S^1\to Z$.

The group $G$ satisfies the small cancellation property $C'(\mu)$ for any positive $\mu$.
Thus by Greedlinger's Lemma, if we take $\mu$ small enough, some disk $D$ in the diagram
has a segment of its boundary of length at least $n/2$ in common with $\gamma$. Note that $\partial D$
is labeled $r$ or $r^{-1}$, and has length $n$. Since $1/2 > \beta$ as
in Lemma~\ref{lemma:common_path_lifts}, the boundary of this path actually lifts to $L$,
whence the diagram is not efficient after all. This contradiction proves the theorem.
\end{proof}

Note that each boundary component of $\overline{M}(Z)$ is of the form $\overline{S}(Y)$ 
for some 3-valent fatgraph $Y$ immersed in $Z$; thus the same argument implies 
that every component of $\partial \overline{M}(Z)$ is $\pi_1$-injective, 
and therefore $\overline{M}(Z)$ has incompressible boundary. Furthermore, since $K$ is
aspherical, so is $\overline{M}(Z)$, and therefore $\overline{M}(Z)$ is irreducible, and
$\pi_1(\overline{M}(Z))$ does not split as a free product.

To show that $\overline{M}(Z)$ is homotopic to a hyperbolic 3-manifold with totally geodesic
boundary, it suffices to show that it is acylindrical, by Thurston's hyperbolization theorem---see, for instance, \cite[Theorem 4.3]{Bonahon_Structures}.
We show this in \S~\ref{acylindricity_section}.

\section{Acylindricity}\label{acylindricity_section}

In this section we explain why the 3-manifolds we have produced in random 1-relator groups are
acylindrical.

\begin{theorem}[1-Relator Acylindrical Subgroup Theorem]\label{theorem:1_relator_acylindrical}
Let $G=\langle F_k \; | \; r\rangle$ be a random 1-relator group where $|r|=n$. Then with
probability $1-O(e^{-n^c})$ for the beaded spine $L \to Z \to X$ guaranteed 
by Proposition~\ref{proposition:1_relator_injective} 
the 3-manifold $\overline{M}(Z)$ is acylindrical. Thus, with overwhelming probability,
random 1-relator groups contain subgroups isomorphic to the fundamental group of a hyperbolic
3-manifold with totally geodesic boundary.
\end{theorem}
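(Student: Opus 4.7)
The proof follows the strategy of Proposition~\ref{proposition:1_relator_injective}. Suppose for contradiction there is an essential cylinder $A \to \overline{M}(Z)$. Pushing forward under the tautological immersion to $K$, the free homotopy of the two boundary loops is realized by an annular van Kampen diagram $\mathcal{A}$ over $G$, which we take to be \emph{efficient}: no $2$-cell $D$ of $\mathcal{A}$ has $\partial D$ lifting to $L$ (for such a $D$ would bound a disk in $\overline{M}(Z)$ across which we could push $\mathcal{A}$, reducing its $2$-cell count). We split into two cases depending on whether $\mathcal{A}$ uses any of the capping $2$-cells of $\overline{M}(Z)$.

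Suppose first that $\mathcal{A}$ uses no capping $2$-cells. Then the cylinder lies in the thickened mapping cone of $f:L\to Z$, prior to attaching $2$-handles along the components of $L$. This thickening has a rigid combinatorial structure prescribed by the spine: near each vertex of $Z$ it is a tetrahedral neighborhood, and near each edge a triangular prism, assembled via the good local models of Definition~\ref{definition:spine}. The goodness condition forces the six local arcs of $L$ at each vertex to pair the four incident edges of $Z$ in all six possible ways, and a direct combinatorial analysis --- tracing the cylinder through this cell structure and using that every edge of $Z$ has length at least $\lambda$ --- forces any annulus between two loops on the outer boundary surface to be boundary-parallel.

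Suppose instead that $\mathcal{A}$ uses at least one capping $2$-cell. With probability $1-O(e^{-n^c})$ the $1$-relator group $G$ satisfies the small cancellation condition $C'(\mu)$ for every $\mu>0$. Applying the annular form of Greendlinger's Lemma yields a $2$-cell $D$ of $\mathcal{A}$ whose boundary (of length $n$) shares a subarc of length at least $(1/2-\mu)n$ with one of the two annular boundary components of $\mathcal{A}$; in the special case that $\mathcal{A}$ is a single ring of $2$-cells, each $2$-cell automatically has such an arc, since the two arcs it shares with its neighbors have total length less than $2\mu n$. This subarc is an immersed path in $Z$ whose label is a subword of $r^{\pm 1}$, so Lemma~\ref{lemma:common_path_lifts}, applied with any $\beta<1/2$ and $\mu$ sufficiently small, lifts it to $L$. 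As in the proof of Proposition~\ref{proposition:1_relator_injective}, the whole boundary $\partial D$ then lifts to $L$, contradicting efficiency.

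The main obstacle I expect is the first case: the overall strategy --- reducing acylindricity to a finite combinatorial check on the local structure of the spine --- is clear, but one must carry out the classification of immersed cylinders in the thickened mapping cone and verify directly that each is boundary-parallel, making essential use of the goodness of the local models together with the length lower bound $\lambda$ on the edges of $Z$. Once acylindricity is established, hyperbolicity with totally geodesic boundary follows from Thurston's geometrization theorem for Haken manifolds applied to the irreducible, atoroidal, acylindrical $3$-manifold $\overline{M}(Z)$ with incompressible boundary.
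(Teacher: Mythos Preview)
Your overall structure matches the paper's: use annular small cancellation together with Lemma~\ref{lemma:common_path_lifts} to rule out any diagram containing relator disks, then handle the diskless case combinatorially via goodness of the spine. Your Case~2 is essentially the paper's argument; the paper invokes the annular Greendlinger lemma (Lyndon--Schupp, Ch.~V, Thm.~5.4) to obtain a disk with a boundary arc of length at least $n/3$ on one of the $\gamma_i$, rather than your $(1/2-\mu)n$, but any fixed positive fraction of $n$ suffices once $\beta$ is chosen smaller.

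The gap is exactly where you locate it, in Case~1, and the paper's argument there is both sharper and simpler than the analysis you sketch --- in particular it \emph{does not use the edge-length bound $\lambda$ at all}. Once the annular diagram has no disks, the two boundary loops $\gamma_1,\gamma_2$ (each immersed in its boundary fatgraph $Y_i\to Z$) have the \emph{same} immersed image $\gamma$ in $Z$. Now exploit goodness directly. Along each edge $e$ of $Z$ the thickening is a triangular prism with three rectangular boundary faces; the two faces carrying $\gamma_1$ and $\gamma_2$ meet along a unique one of the three sheets of $L$ over $e$, and that sheet is the lift of $\gamma$ there. At each vertex $v$ the thickening has tetrahedral combinatorics with four local boundary faces; the two faces carrying $\gamma_1$ and $\gamma_2$ meet along a unique one of the six local arcs of $L$ at $v$ (this is precisely the content of the good local model --- each pair of incident edges is joined by exactly one arc of $L$), and that arc selects both which pair of incident edges $\gamma$ traverses and its lift to $L$. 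These local lifts are compatible along $\gamma$ and assemble into a \emph{global} lift $\gamma\to L$. Since each component of $L$ bounds a capping disk in $\overline{M}(Z)$, the loop $\gamma$ is null-homotopic in $\overline{M}(Z)$ --- contradicting essentiality of the $\gamma_i$ in the incompressible boundary.

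Two remarks. First, the conclusion is stronger than ``boundary-parallel'': the core of the putative annulus is actually inessential in $\overline{M}(Z)$. Second, your invocation of $\lambda$ in Case~1 is a red herring; thinness of the spine is used only upstream, to feed Lemma~\ref{lemma:common_path_lifts}, while the diskless case is a purely local statement about good degree-$3$ immersions.
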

\begin{proof}
Each boundary component $\partial_i \subset \partial \overline{M}(Z)$ 
is of the form $\overline{S}(Y_i)$ for some trivalent fatgraph $Y_i \to Z$ immersed in $Z$.
Suppose $\overline{M}(Z)$ admits an essential annulus. Then there is a van Kampen diagram
on an annulus $\A$ with boundary $\gamma_1,\gamma_2$ where $\gamma_i$ immerses in $Y_i$
for fatgraphs $Y_i$ associated to boundary components $\partial_i$ as above, and
each $\gamma_i$ is essential in $\overline{S}(Y_i)$ (which in turn is essential in
$\overline{M}(Z)$).

Assume that $\A$ is efficient. Then  by Lemma~\ref{lemma:common_path_lifts},
for any positive $\beta$ we can insist that no segment of $\partial D \cap \gamma_i$ 
has length more than $\beta n$. For big $n$, with probability $1-O(e^{-n^c})$ we
know that $K$ is $C'(\mu)$ for any positive $\mu$; when $\mu$ is small, the annular version
of Greedlinger's Lemma (see \cite{Lyndon_Schupp} Ch.~V Thm.~5.4 and its proof) 
implies that if $\A$ contains a disk at all, then
some disk $D$ in the diagram has a segment on its boundary of length at least $n/3$ in
common with $\gamma_1$ or $\gamma_2$. Taking $\beta < 1/3$ we see that $\A$ can contain
no disks at all; i.e.\/ $\gamma_1$ and $\gamma_2$ have the same image $\gamma$ in $Z$.

Now we use the fact that $L \to Z$ is a good spine. At each vertex $v$ of $Z$, four local boundary
components of $\overline{M}(Z)$ meet; the fact that $\gamma$ lifts to paths $\gamma_1$
and $\gamma_2$ in two of these component forces $\gamma$ to run between two specific edges
incident to $v$, and this determines a {\em unique} lift of $\gamma$ to $L$ near $v$
compatible with the existence of the $\gamma_i$. Similarly, along each edge
$e$ of $Z$, three local boundary components of $\overline{M}(Z)$ meet; the components
containing $\gamma_1$ and $\gamma_2$ thus again determine a unique lift of $\gamma$ to $L$
along $e$. These local lifts at vertices and along edges are compatible, and determine a
{\em global} lift of $\gamma$ to $L$. But this means $\gamma$ is inessential in 
$\overline{M}(Z)$, contrary to hypothesis. Thus $\overline{M}(Z)$ is acylindrical after all.
\end{proof}

\section{3-Manifolds Everywhere}\label{endgame_section}

We now show that the acylindrical 3-manifold subgroups that we have constructed in
random 1-relator groups stay essential as we add $(2k-1)^{Dn}$ independent random relations
of length $n$, for any $D<1/2$. Our argument follows the proof of
Thm.~6.4.1 \cite{Calegari_Walker} exactly, and depends only on the following two facts:
\begin{enumerate}
\item{the beaded spine $Z$ has total length $O(n)$, has valence 4, and every segment
has length at least $\lambda$, where we may choose $\lambda$ as big as we like (depending on $D$); and}
\item{any immersed segment $\gamma \to Z$ of length $\beta n$ whose label is a subword
of $r$ or $r^{-1}$ lifts to $L$, where we may choose $\beta$ as small as we like (depending on $D$).}
\end{enumerate}

Beyond these facts, we use two theorems of \cite{Ollivier}, which give explicit
estimates for the linear constant in the isoperimetric function and 
for the constant of hyperbolicity for a random group at density $D<1/2$.

\subsection{Ollivier's estimates}

We use the following theorems of \cite{Ollivier}:

\begin{theorem}[\cite{Ollivier}, Thm.~2]\label{theorem:Ollivier_1}
Let $G$ be a random group at density $D$. Then for any positive $\epsilon$, and any
efficient van Kampen diagram $\D$ containing $m$ disks, we have
$$|\partial\D| \ge (1-2D-\epsilon)\cdot nm$$
with probability $1-O(e^{-n^c})$.
\end{theorem}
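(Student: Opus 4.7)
\emph{Plan.} The strategy is to rewrite the target inequality as a bound on internal cancellation and then run a first-moment (expected-count) argument over the random choice of relators. If $\D$ has $m$ two-cells and total internal identification length $\sigma$ (each interior edge counted once), then $|\partial\D| = nm - 2\sigma$, so the claim becomes the assertion that $\sigma \le (D+\epsilon/2)\,nm$ holds for every efficient van Kampen diagram, with overwhelming probability.

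\emph{Counting combinatorial types.} First I would fix a combinatorial type $\tau$ of an efficient planar $2$-complex with $m$ disks of perimeter $n$ and total interior length $\sigma$: this records the cell structure, the cyclic ordering of disks around each interior edge, and the way each disk's boundary is subdivided into maximal arcs it shares with neighbouring disks. A planarity/Euler characteristic count bounds the number of such types (for fixed $m$ and $\sigma$) by $C(\epsilon)^m$, with sub-exponential-in-$n$ factors absorbed into $C(\epsilon)$; the efficiency hypothesis (no pair of disks shares an arc of length $\ge n/2$, no cancellable pairs) is what keeps this bound single-exponential in $m$ rather than blowing up with $n$.

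\emph{First-moment estimate.} Next I would estimate the expected number of realisations of a fixed type $\tau$. There are $\ell = \lfloor (2k-1)^{Dn}\rfloor$ relators, so at most $\ell^m$ ways to label the disks. For each internal identification, two prescribed subwords of the chosen relators must agree letter-by-letter; since the relators are uniform random reduced cyclic words of length $n$, a prescribed match of length $\ell_e$ along an interior edge $e$ occurs with probability $O((2k-1)^{-\ell_e})$, and near-independence across edges yields an overall probability $\le (2k-1)^{-\sigma(1-o(1))}$. Combining, the expected number of realisations of $\tau$ is at most
\[
\ell^m \cdot (2k-1)^{-\sigma(1-o(1))} = (2k-1)^{Dnm - \sigma + o(nm)}.
\]
Summing over types and over $\sigma$ in the bad range $\sigma \ge (D+\epsilon/2)nm$ bounds the expected number of counterexamples by
\[
\sum_m C(\epsilon)^m\, (2k-1)^{-(\epsilon/2)nm + o(nm)},
\]
which is (stretched-)exponentially small for $n$ large. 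Markov's inequality then delivers the stated $1-O(e^{-n^c})$ probability bound.

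\emph{Main obstacle.} The heart of the argument is the type count together with the independence of the matching constraints. Without efficiency one can cheaply insert cancelling pairs of disks, inflating $m$ while contributing nothing essential, and this would destroy any linear isoperimetric inequality. One therefore needs a combinatorial lemma, genuinely using efficiency, showing both that the number of admissible planar diagram types with prescribed $m$ and $\sigma$ is at most $C(\epsilon)^m$ uniformly in $n$, and that internal edges can be grouped so that the letter-match constraints at distinct groups are probabilistically (nearly) independent, so the per-edge probabilities actually multiply. Making these two statements precise, handling boundary effects at long shared arcs, and absorbing the lower-order terms in a uniform $o(nm)$ is where the bulk of Ollivier's work lies.
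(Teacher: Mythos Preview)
The paper does not prove this statement at all: it is quoted verbatim as a black box from Ollivier's paper \cite{Ollivier}, with no argument given here. So there is nothing in the present paper to compare your proposal against.

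That said, your sketch is a reasonable outline of how Ollivier's argument actually runs: reduce to bounding the total interior-edge length $\sigma$, enumerate abstract diagram types with $m$ faces, and run a first-moment count using the $\ell^m$ label choices against the $(2k-1)^{-\sigma}$ matching constraint. One correction: the ``efficiency'' (Ollivier says ``reduced'') hypothesis is not about forbidding long shared arcs; it simply rules out adjacent cancelling pairs of faces bearing the same relator with opposite orientations. The point of this hypothesis in the proof is subtler than you indicate: it is what guarantees that when two faces share an edge, the two relator-subwords being matched are genuinely independent random variables (or, if the two faces carry the same relator, that the match is between distinct positions), so that the per-edge probabilities really do multiply. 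The type count itself is controlled for all planar diagrams with $m$ faces of perimeter $n$, not just efficient ones, and is polynomial in $n$ for fixed $m$ rather than $C(\epsilon)^m$ uniformly; Ollivier handles the sum over $m$ by noting that the exponent $-(\epsilon/2)nm$ dominates any polynomial-in-$n$ prefactor once $n$ is large.
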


\begin{theorem}[\cite{Ollivier}, Cor.~3]\label{theorem:Ollivier_2}
Let $G$ be a random group at density $D$. Then the hyperbolicity constant $\delta$ of the
presentation satisfies
$$\delta \le 4n/(1-2D)$$
with probability $1-O(e^{-n^c})$.
\end{theorem}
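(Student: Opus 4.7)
The plan is to deduce this explicit bound on the hyperbolicity constant from the linear isoperimetric inequality of Theorem~\ref{theorem:Ollivier_1} by applying the standard quantitative form of Gromov's equivalence between linear Dehn functions and hyperbolicity. Since the event on which Theorem~\ref{theorem:Ollivier_1} holds already has probability $1-O(e^{-n^c})$, this immediately supplies the probabilistic half of the statement, and the remaining work is purely deterministic: given the isoperimetric bound, prove the $\delta$-bound.

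First, I would reinterpret Theorem~\ref{theorem:Ollivier_1} as the assertion that the Dehn function $A$ of the presentation satisfies $A(\ell) \le \ell/((1-2D-\epsilon)n)$; that is, every null-homotopic word of length $\ell$ bounds an efficient van Kampen diagram with at most this many 2-cells. Write $K = 1/((1-2D-\epsilon)n)$ for the linear isoperimetric constant. Note that each relator has length exactly $n$, so every 2-cell of any van Kampen diagram has boundary of length $n$ and hence diameter at most $n/2$.

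Second, I would apply the standard triangle-filling argument. Let $T = [x,y]\cup[y,z]\cup[z,x]$ be a geodesic triangle and let $p \in [x,y]$. Assume for contradiction that $d(p,[y,z]\cup[z,x]) > 4n/(1-2D-\epsilon)$. By truncating the triangle near $p$ we may assume the perimeter of $T$ is comparable to $d:=d(p,[y,z]\cup[z,x])$, then fill $T$ by an efficient van Kampen diagram $\D$. The isoperimetric inequality gives $\#\{\text{2-cells}\} \le K \cdot |\partial T|$, which is linear in $d$. Now take a dual chain of 2-cells in $\D$ connecting the vertex $p$ to the union of the other two sides; each step of the chain moves a distance at most $n/2$, so the number of steps is at least $2d/n$. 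Comparing with the upper bound $K|\partial T|$ from isoperimetry and rearranging forces $d \le cKn \cdot |\partial T|/d$ for a universal $c$, and tracking constants carefully yields $d \le 4n/(1-2D-\epsilon)$, contradicting the assumption. Letting $\epsilon \to 0$ gives $\delta \le 4n/(1-2D)$.

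The main obstacle is pinning down the constant $4$ rather than some larger universal constant. The equivalence ``linear isoperimetric $\Rightarrow$ hyperbolic'' is extremely robust, but the numerical factor depends on: (a) exactly how the dual chain of 2-cells is constructed (shortest dual path versus a greedy corridor), (b) how one bounds the perimeter of a sub-triangle of $T$ cut out near a minimal-distance realizer, and (c) how the constant $n/2$ for the diameter of a 2-cell interacts with the isoperimetric constant $K$. I would expect Ollivier's argument in \cite{Ollivier} to exploit the fact that, in the random setting, relators are essentially incompressible, which tightens the diameter bound and lets the constant $4$ emerge cleanly; in my own write-up I would follow his choice of dual corridor rather than reinvent it, and absorb the residual slack into the $\epsilon\to 0$ limit, which is harmless since the bound only needs to hold with probability $1-O(e^{-n^c})$ rather than almost surely.
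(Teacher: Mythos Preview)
The paper does not prove this theorem at all: it is quoted verbatim as Corollary~3 of Ollivier~\cite{Ollivier}, alongside Theorem~\ref{theorem:Ollivier_1}, under the heading ``We use the following theorems of Ollivier.'' There is nothing in the paper to compare your proposal against.

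That said, your outline is the right idea and is essentially how Ollivier himself proceeds: deduce hyperbolicity quantitatively from the linear isoperimetric inequality of Theorem~\ref{theorem:Ollivier_1}. Your sketch has a genuine soft spot, though. The ``dual chain of 2-cells'' argument as you describe it does not by itself yield a sharp constant; the inequality you wrote, $d \le cKn|\partial T|/d$, would give $d = O(\sqrt{Kn|\partial T|})$ rather than a linear bound, and you have not actually bounded $|\partial T|$ in terms of $d$ in a way that closes the loop. The standard route that produces an explicit linear constant goes instead through Papasoglu's thin-bigons criterion or through Short's subquadratic-implies-linear machinery, and Ollivier's constant $4/(1-2D)$ comes from a careful version of that analysis specific to presentations where all relators have the same length $n$. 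If you want to reconstruct the argument rather than cite it, you should follow that route; the corridor heuristic you wrote down will not pin down the constant without substantially more work.
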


From this, we will deduce the following lemma, which is the exact analog of
\cite{Calegari_Walker}  Lem.~6.3.2, and is deduced in exactly the same way from Ollivier's
theorems:

\begin{lemma}\label{lemma:bounded_faces}
Let $\overline{M}(Z)$ be a 3-manifold obtained from a beaded spine, and suppose it is not
$\pi_1$-injective in $G$, a random group at density $D$. Then there are constant $C$ and $C'$
depending only on $D<1/2$, a geodesic path $\gamma$ in $Z$ of length at most $Cn$, and
a van Kampen diagram $\D$ containing at most $C'$ faces so that $\gamma \subset \partial \D$
and $|\gamma|>|\partial \D|/2$.
\end{lemma}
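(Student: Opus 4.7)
The plan is to assume that $\overline{M}(Z) \to K$ is not $\pi_1$-injective and extract the required diagram from a minimal counterexample by combining Ollivier's linear isoperimetric inequality with a Greedlinger-style argument. Suppose, for contradiction, there is an essential loop in $\overline{M}(Z)$ whose image in $K$ is null-homotopic. Since $\pi_1(\overline{M}(Z))$ is generated by loops in $Z$ (via van Kampen applied to the cell structure of $Z$ and the attached disks), we may realise such a loop as an immersed loop $\alpha$ in $Z$, chosen of minimum length. Let $\D_0$ be an efficient reduced van Kampen diagram over $G$ with $\partial \D_0 = \alpha$ of minimum area $m_0$.

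Ollivier's inequality (Theorem~\ref{theorem:Ollivier_1}) gives $|\alpha| \geq (1-2D-\epsilon)n m_0$. Since each edge of $\alpha$ lies on the boundary of a unique face of $\D_0$, we have $\sum_F |\partial F \cap \alpha| = |\alpha|$. Combining this with the $C'(\mu)$ small-cancellation condition (which holds for random groups at any density $<1/2$, with $\mu$ arbitrarily small, so that interior arcs between adjacent faces are short), a Greedlinger-type argument produces a face $F$ of $\D_0$ whose boundary contains a connected sub-arc $\gamma \subseteq \alpha$ of length at least $(1-2D-\epsilon)n$, which exceeds $n/2$ when $D < 1/2$ and $\epsilon$ is small. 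Setting $\D := \{F\}$ and taking $\gamma$ as above, $|\partial \D| = n$, $\gamma \subset \partial \D$ with $|\gamma| > |\partial \D|/2$, $|\gamma| \leq n \leq Cn$, and $\D$ contains exactly one face, satisfying all size requirements with $C = C' = 1$.

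The remaining task --- and the principal technical difficulty --- is to ensure that $\gamma$ is a geodesic in the graph $Z$. Minimality of $\alpha$ handles most cases. If $\gamma$ is not geodesic, let $\gamma^{*}$ be the $Z$-geodesic between its endpoints and set $\tau := \gamma \cdot (\gamma^{*})^{-1}$, a loop in $Z$ of length at most $2n$. When $\tau$ is null in $\overline{M}(Z)$ (and hence in $G$), replacing $\gamma$ by $\gamma^{*}$ in $\alpha$ yields a strictly shorter essential loop null in $G$, contradicting minimality; when $\tau$ is essential in $\overline{M}(Z)$ but null in $G$, then $\tau$ itself is a shorter counterexample. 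The residual case is that $\tau$ is both essential in $\overline{M}(Z)$ and nontrivial in $G$; here the naive replacement is illegal, so instead we invoke Ollivier's hyperbolicity estimate (Theorem~\ref{theorem:Ollivier_2}), $\delta \leq 4n/(1-2D)$, to find a short geodesic $\eta$ in the Cayley graph of $G$ closing $\gamma^{*}$ and $\partial F \smallsetminus \gamma$ into a loop of length $O(n)$ that is null in $G$. This loop bounds a sub-diagram of bounded area (by the isoperimetric inequality applied to it), which we graft onto $F$ to produce the desired diagram $\D$: it has at most $C' = C'(D)$ faces, its boundary contains the geodesic arc $\gamma^{*}$ of length $\leq Cn$, and $|\gamma^{*}| > |\partial \D|/2$.

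The main obstacle I expect is this last sub-case, where minimality alone does not suffice; exactly as in the analogous Lemma 6.3.2 of \cite{Calegari_Walker}, one must use the linear isoperimetric and hyperbolicity estimates of Ollivier in tandem in order to simultaneously control the area of the auxiliary diagram used to geodesify $\gamma$ and the length of the boundary segment $\partial \D \smallsetminus \gamma^{*}$, so as to preserve $|\gamma^{*}| > |\partial \D|/2$ after the grafting.
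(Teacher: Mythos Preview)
Your argument has a genuine gap at the very first step: you assert that a random group at density $D<1/2$ satisfies $C'(\mu)$ for arbitrarily small $\mu$, and use this to run a Greedlinger argument extracting a single face with more than half its boundary on $\alpha$. This is false. At density $D$, two independent random relators typically share a common piece of length approximately $2Dn$, so the presentation is only $C'(2D+\epsilon)$; for $D$ close to $1/2$ this gives essentially no cancellation control, and no Greedlinger-type shell face need exist. Ollivier's isoperimetric inequality bounds the \emph{total} boundary length from below, but it does not promise any single face contributing a connected arc of length $>n/2$. Your subsequent case analysis to ``geodesify'' $\gamma$ in $Z$ is therefore built on a step that does not go through, and the residual grafting case you flag as the main obstacle is in fact not the real difficulty.

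The paper's argument is quite different and avoids small cancellation entirely. One starts with a $Z$-geodesic loop in the kernel and uses Ollivier's hyperbolicity bound $\delta\le 4n/(1-2D)$ together with the standard local-to-global fact (Bridson--Haefliger III.H.1.13) that $k$-local geodesics are global quasigeodesics once $k>8\delta$. A null-homotopic loop cannot be a global quasigeodesic, so it must contain a subsegment $\gamma$ of length at most $9\delta=O(n)$ which is not geodesic in the Cayley graph of $G$; this $\gamma$ is automatically a $Z$-geodesic (it is a subarc of one), and cobounds a diagram $\D$ with a strictly shorter $G$-geodesic, giving $|\gamma|>|\partial\D|/2$ for free. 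Ollivier's isoperimetric inequality then bounds the number of faces by $|\partial\D|/((1-2D-\epsilon)n)=O(1)$. No minimality, no Greedlinger, no grafting.
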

\begin{proof}
Theorem~\ref{theorem:Ollivier_2} says that $\delta \le 4n/(1-2D)$, and in any
$\delta$-hyperbolic geodesic metric space, a $k$-local geodesic is a (global) 
$(\frac {k+4\delta}{k-4\delta},2\delta)$-quasigeodesic for any $k>8\delta$ (see
\cite{Bridson_Haefliger}, Ch.~III.~H, 1.13 p.~405). A local geodesic in the 1-skeleton of $\overline{M}(Z)$
corresponding to an element of the kernel must contain a subsegment of length at most $9\delta$ which is not a local geodesic
in $K$.  The lift of this subsegment to the universal cover $\widetilde{K}$ (i.e. the Cayley complex of $G$) cobounds a van Kampen diagram $\D$ with an honest geodesic segment in $G$. But
by Theorem~\ref{theorem:Ollivier_1}, the diagram $\D$ must satisfy
$$72n/(1-2D)\ge |\partial D|\ge (1-2D-\epsilon)\cdot nC'$$
where $C'$ is the number of faces; thus $C'$ is bounded in terms of $D$, and independent of
$n$.
\end{proof}

\subsection{Proof of the main theorem}

\begin{theorem}[3-Manifolds Everywhere]\label{theorem:random_acylindrical_subgroup}
Fix $k\ge 2$. A random $k$-generator group --- either in the few relators model
with $\ell \ge 1$ relators, or the density model with density $0<D<1/2$ --- with relators
of length $n$ contains many quasi-isometrically embedded subgroups isomorphic to the fundamental group 
of a hyperbolic 3-manifold with totally geodesic boundary, with probability $1-O(e^{-n^C})$ for some $C>0$.
\end{theorem}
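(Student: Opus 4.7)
The plan is to promote the $\pi_1$-injectivity and acylindricity already established in Theorem~\ref{theorem:1_relator_acylindrical} for the 1-relator group $G_1 = \langle F_k\mid r_1\rangle$ to the full random group $G$, using Ollivier's estimates as packaged in Lemma~\ref{lemma:bounded_faces}. First, select one relator $r_1$; with probability $1-O(e^{-n^c})$ it is $(T,\epsilon)$-pseudorandom for any fixed $(T,\epsilon)$, so Theorem~\ref{theorem:1_relator_acylindrical} produces a beaded spine $f\colon L\to Z\to X$ such that $\overline{M}(Z)$ is homotopy equivalent to a compact acylindrical hyperbolic 3-manifold with totally geodesic boundary, and injects into $G_1$. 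Two features of the construction will be used throughout: (i) $Z$ is 4-valent of total length $O(n)$ with every edge of length at least $\lambda$, where $\lambda$ may be chosen arbitrarily large depending on $D$; and (ii) by Lemma~\ref{lemma:common_path_lifts}, any immersed subpath of $Z$ of length $\beta n$ whose label is a subword of $r_1^{\pm 1}$ already lifts to $L$, for $\beta$ as small as we like.

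For injectivity of $\pi_1(\overline{M}(Z))\to G$, suppose it fails. Lemma~\ref{lemma:bounded_faces} supplies an efficient van Kampen diagram $\mathcal{D}$ over $K$ with at most $C'$ faces and a geodesic $\gamma\subset Z$ of length at most $Cn$ with $\gamma\subset\partial\mathcal{D}$ and $|\gamma|>|\partial\mathcal{D}|/2$, where $C,C'$ depend only on $D$. Since $\mathcal{D}$ has at most $C'$ faces, some face $D\subset\mathcal{D}$ shares with $\gamma$ an arc $\sigma$ of length a definite fraction of $|\gamma|$; inspecting the proof of Lemma~\ref{lemma:bounded_faces} one sees $|\gamma|\geq c_0 n$ for some $c_0=c_0(D)>0$, so $|\sigma|\geq c_0 n/C' > \beta n$ for any preselected $\beta\leq c_0/C'$. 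Now split on the relator $r_j$ labeling $\partial D$. If $j=1$: Lemma~\ref{lemma:common_path_lifts} lifts $\sigma$ to $L$, and because $Z\to X$ is an immersion and $\partial D$ is labeled by the reduced cyclic word $r_1^{\pm 1}$, this lift extends uniquely to all of $\partial D$; hence $\partial D$ bounds a 2-cell of $\overline{M}(Z)$, $\mathcal{D}$ can be simplified across $D$, contradicting efficiency. If $j\geq 2$: $r_j$ is independent of $Z$ (which only used $r_1$), and the set of labels of length-$\beta n$ immersed paths in $Z$ has density at most $(\log 3)/(\lambda\log(2k-1))+o(1)$ among reduced words of length $\beta n$, arbitrarily small for $\lambda$ large; the density of length-$\beta n$ subwords of a single random $r_j$ is $O((\log n)/n)$. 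Gromov's intersection formula (Proposition~\ref{proposition:intersection}) then makes these sets disjoint with probability $1-O(e^{-c_\lambda n})$, and a union bound over the at most $(2k-1)^{Dn}$ relators succeeds once $c_\lambda>D\log(2k-1)$, arranged by taking $\lambda$ large enough. In the few-relators model the union bound is trivial.

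The quasi-isometric embedding follows by essentially the same case analysis: $\pi_1(\overline{M}(Z))$ is word-hyperbolic as the fundamental group of a compact hyperbolic manifold with totally geodesic boundary, and $G$ is word-hyperbolic with overwhelming probability by Theorem~\ref{theorem:Ollivier_2}, so any failure of quasi-isometric embedding yields a sequence of geodesics in $\overline{M}(Z)$ that become arbitrarily distorted in $G$, hence bounded-complexity van Kampen diagrams whose boundary contains a long geodesic arc in $Z$; the argument above rules these out exactly as it rules out kernel elements. The main obstacle I anticipate is the parameter bookkeeping: one must fix $D$ first, then extract $C,C',c_0$ from Lemma~\ref{lemma:bounded_faces}, then select $\beta\leq c_0/C'$, and finally take $\lambda$ large enough both to drive the $Z$-path density well below $1$ and to force the Chernoff exponent $c_\lambda$ to beat $D\log(2k-1)$ in the union bound. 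None of the remaining probabilistic estimates are delicate once this ordering is respected, and each is of the form $1-O(e^{-n^C})$, giving the claimed bound.
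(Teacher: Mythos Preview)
Your $j=1$ case is fine and matches what the paper does: efficiency plus Lemma~\ref{lemma:common_path_lifts} rules out any $r_1$--disk sharing a segment of length $\ge\beta n$ with $\gamma$. The gap is in the $j\ge 2$ case. Your claimed exponent $c_\lambda$ cannot be pushed above $D\log(2k-1)$ by taking $\lambda$ large. Writing $|\sigma|=\beta n$, Gromov's intersection formula gives a failure probability of order $(2k-1)^{|\sigma|(d_1+d_2-1)}$, where $d_1\approx(\log 3)/(\lambda\log(2k-1))$ is the density of $Z$--paths and $d_2=o(1)$ is the density of subwords of a single $r_j$. Letting $\lambda\to\infty$ drives $d_1\to 0$, but the exponent in $n$ tends only to $\beta\log(2k-1)$, and $\beta$ is pinned down by $\beta\le c_0/C'$, a constant depending on $D$ alone. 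A short computation from Theorems~\ref{theorem:Ollivier_1} and~\ref{theorem:Ollivier_2} gives $c_0\approx(1-2D)/2$ and $C'\approx 72/(1-2D)^2$, so $c_0/C'$ is of order $(1-2D)^3$; the inequality $c_0/C'>D$ you need for the union bound over $(2k-1)^{Dn}$ relators therefore fails for all but extremely small $D$, and certainly as $D\to 1/2$.

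The paper sidesteps this by \emph{not} isolating a single face and union--bounding over relators. Instead it excises all $r_1$--disks (using Lemma~\ref{lemma:common_path_lifts} only to bound how much boundary is lost), obtains a reduced diagram $\mathcal D'$ with $m'\le C'$ faces and $|\gamma'|\ge|\partial\mathcal D'|/2$, and then runs a single first--moment count: the $m'$ random relators contribute $nm'D$ degrees of freedom, the path $\gamma'$ contributes $n\beta'$ (arbitrarily small), while the interior edges together with $\gamma'$ impose at least $nm'/2$ constraints because $|\gamma'|+|\mathcal I|\ge nm'/2$. The resulting inequality $nm'D+n\beta'<nm'/2$ reduces directly to $D<1/2$, with no union bound over relators needed; one only sums over the finitely many diagram shapes and polynomially many edge--length assignments. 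That global count is the missing idea in your approach.
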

\begin{proof}
The proof exactly follows the proof of Thm.~6.4.1 from \cite{Calegari_Walker}.
Pick one relation $r$ and build $L \to Z \to X$ and $\overline{M}(Z)$ by the method of
\S\ \ref{section:thin_spine_section}. We have already seen that $\overline{M}(Z)$ is homotopy equivalent to 
a hyperbolic 3-manifold with totally geodesic boundary, and that its fundamental group
injects into $\langle F\; | \; r\rangle$; we now show that it stays injective in
$G$ when we add another $(2k-1)^{Dn}$ independent random relations of length $n$.

The argument is a straightforward application of Gromov's intersection formula,
i.e.\/ Proposition~\ref{proposition:intersection}. It is convenient to express it in
terms of {\em degrees of freedom}, measured multiplicatively as powers of $(2k-1)$.
Suppose $\overline{M}(Z)$ is not $\pi_1$-injective. Then by Lemma~\ref{lemma:bounded_faces}
there is an efficient 
van Kampen diagram $\D$ with $m \le C'$ faces (where $C'$ depends only on $D$), 
and a local geodesic $\gamma$ which immerses in $Z$ with $\gamma \subset \partial \D$ and 
$|\gamma|>|\partial \D|/2$. The choice of $\gamma$ gives $n\beta'$ degrees of freedom,
where $\beta' = \log(3)\alpha/\lambda$ and where $|\gamma|=\alpha n$, since there are
$|Z|\cdot 3^{\alpha n/\lambda}$ immersed paths in $Z$ of length $\alpha n$, and we have
$|Z|=O(n)$. Taking $\lambda$ as big as necessary, we can make $\beta'$ as small as we like.

Next, disks in $\D$ with boundary label $r$ or $r^{-1}$ cannot have segments of length
more than $\beta n$ in common either with themselves or with $\gamma$ in an efficient
van Kampen diagram, where $\beta$ as in Lemma~\ref{lemma:common_path_lifts} 
can be taken as small as we like. Since there are at most $m$ disks in $\D$, two distinct
disks cannot have more than $m$ boundary segments in common. Take $\beta$ small enough
so that $\beta m < 1/2$. Then if we let $\D'$ denote
the result of cutting the disks labeled $r$ or $r^{-1}$ out of the diagram, and let
$\gamma'\subset \partial \D'$ denote the union 
$$\gamma':=(\gamma \cap \partial \D') \cup (\partial \D' - \partial \D)$$
and $m'$ the number of disks in $\D'$, then we have inequalities $m' \le m$,
$|\gamma'|\ge |\gamma|$ and $|\gamma'|\ge |\partial \D'|/2$ with equality if and only if
$\D' = \D$.

Each remaining choice of face gives $nD$ degrees of freedom, and each segment in the
interior of length $\ell$ imposes $\ell$ degrees of constraint. Similarly,
$\gamma'$ itself imposes $|\gamma'|$ degrees of constraint. Let $\I$ denote the union of
interior edges. Then $|\partial \D'| + 2|\I|=nm'$ so $|\gamma'|+|\I|\ge nm'/2$ because
$|\gamma'|\ge |\partial \D'|/2$. On the other hand, the total degrees of freedom is
$nm'D + n\beta' < nm'/2$ if $\beta'$ is small enough, so there is no way to assign labels
to the faces to build a compatible diagram, with probability $1-O(e^{-n^C})$. There
are polynomial in $n$ ways to assign lengths to the edges, and a finite number of possible
combinatorial diagrams (since each diagram has at most $C'$ disks); summing the exceptional
cases over all such diagrams shows that the probability of finding some such diagram is
$O(e^{-n^C})$. Otherwise $\overline{M}(Z)$ is $\pi_1$-injective, as claimed.

Finally, we prove that $\overline{M}(Z)$ is quasi-isometrically embedded in $G$.  Indeed, as observed already in \cite{Calegari_Walker}, the above argument actually shows that for any $\epsilon>0$ we can construct $Z$ for which $\pi_1(\overline{M}(Z))$ is $(1+\epsilon)$ quasi-isometrically embedded in $G$.   Controlling $\epsilon$ depends only on applying the argument above to segments $\gamma$ of length at most $\alpha n$ for suitable $\alpha(\epsilon)$.  The constant $\alpha$ then bounds the number of disks in an efficient van Kampen diagram, by Theorem~\ref{theorem:Ollivier_1}.
\end{proof}

\section{Commensurability}\label{section:commensurability}

Once we know that random groups contain many interesting 3-manifold subgroups, it is natural
to wonder exactly {\em which} 3-manifold groups arise. For any fixed non-free
finitely presented group $H$, there are no injective homomorphisms from $H$ to a random group
$G$ at fixed density once the relators in $G$ get sufficiently long (with overwhelming
probability). So it is probably hopeless to try to understand {\em precisely} which subgroups
arise, since this will depend in a very complicated way on the length $n$ of the relators.
However, we are in better shape if we simply try to control the {\em commensurability class}
of the subgroups.

\begin{example}
There are 24 simplices in the barycentric subdivision of a regular Euclidean tetrahedron. Three edges
of this simplex have dihedral angles $\pi/2$, two have dihedral angles $\pi/3$, and one (the edge
lying on the edge of the original simplex) has a dihedral angle of the form $\frac 1 2 \cos^{-1}(\frac 1 3)$,
which is approximately $35.2644^\circ$. If we deform the dihedral angle $\alpha$
of this last edge while keeping the other dihedral angles fixed, the simplex admits a unique 
hyperbolic metric for all $\alpha > \pi/6$ at which point one vertex of the simplex
becomes ideal. If we try to deform to $\alpha < \pi/6$, then three of the faces of the
simplex don't meet at all, and there is a perpendicular plane which intersects these three faces
in the edges of a hyperbolic triangle with angles $(\pi/2,\pi/3,\alpha)$. 
If $\alpha$ is of the form $\pi/m$ for some integer $m>6$,
the group generated by reflections in the 4 sides of the (now-infinite) hyperbolic polyhedron
is discrete, and convex cocompact. This group is a Coxeter group which we denote
$\Gamma(m)$, and whose Coxeter diagram is
\begin{center}
\begin{picture}(70,10)(0,0)
\put(5,5){\circle*{3}}
\put(35,5){\circle*{3}}
\put(65,5){\circle*{3}}
\put(95,5){\circle*{3}}
\put(5,5){\line(1,0){90}}
\put(16,9){$m$}
\end{picture}
\end{center}
The groups $\Gamma(m)$ with $m<6$ are finite.
The group $\Gamma(6)$ is commensurable with the fundamental group of the figure 8 knot complement.
For any $m\ge 7$ the limit set of $\Gamma(m)$ is a (round) Sierpinski carpet. 
Figure~\ref{7fold_carpet} depicts a simply-connected 2-complex $\tilde{K}$ stabilized
by $\Gamma(7)$ with cocompact fundamental domain which is a ``dual'' spine to the
(infinite) polyhedron described above. The faces of this 2-complex are regular 7-gons, and
the vertices are all 4-valent with tetrahedral symmetry.
\begin{figure}[htpb]
\labellist
\small\hair 2pt
\endlabellist
\centering
\includegraphics[scale=0.1]{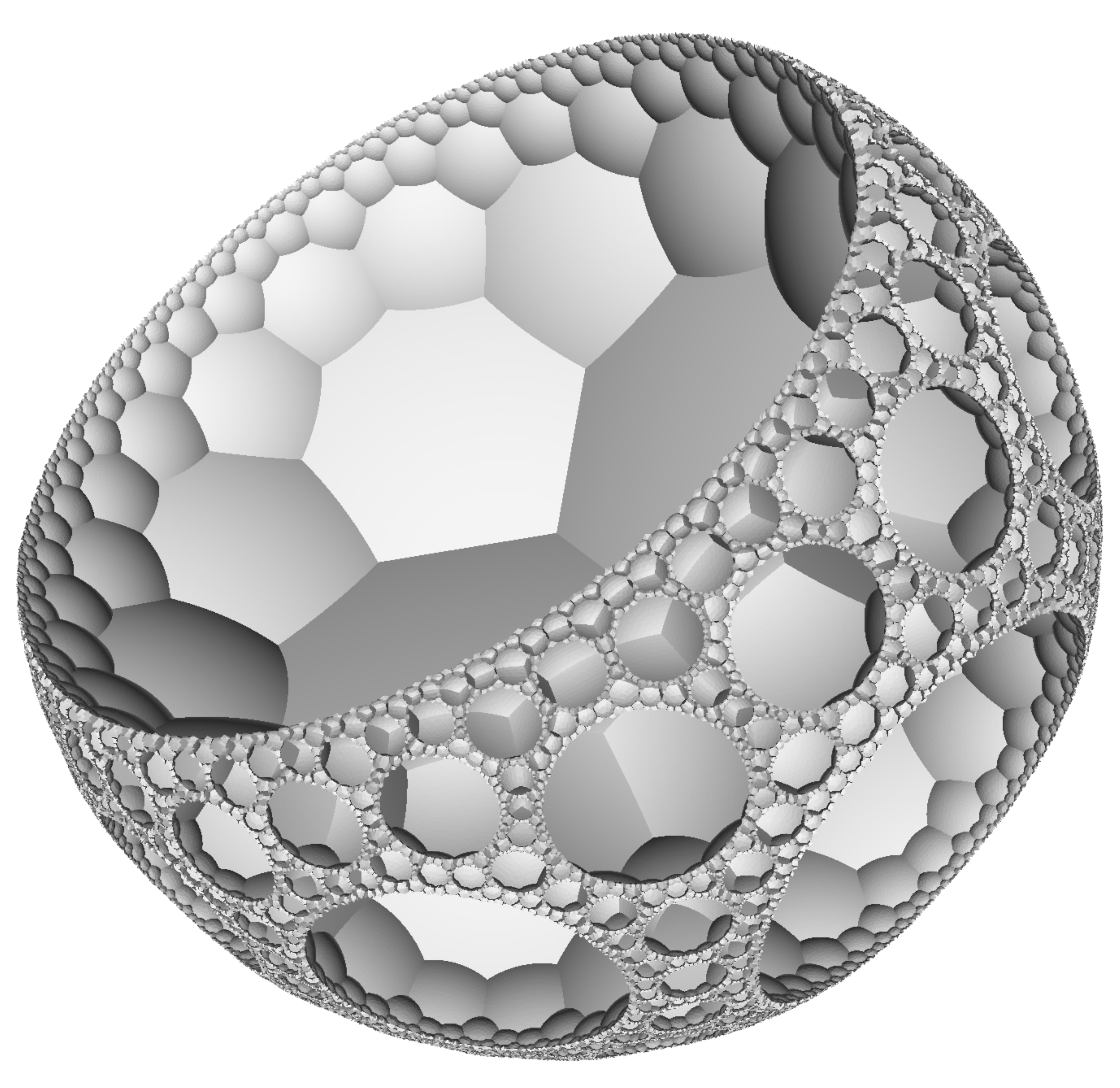}
\caption{A polyhedron $\tilde{K}$ on which $\Gamma(7)$ acts cocompactly}\label{7fold_carpet}
\end{figure}
As $m \to \infty$,
the limit sets converge to the Apollonian gasket, which has Hausdorff dimension about $1.3057$.
The convex covolumes of $\Gamma(m)$ --- i.e.\/ the volumes of the convex hulls of $\H^3/\Gamma(m)$
--- are uniformly bounded above independently of $m$.
\end{example}

We now observe that we can arrange for the 3-manifold groups we construct to be
commensurable with some $\Gamma(m)$.

\begin{theorem}[Commensurability Theorem]\label{theorem:commensurability_theorem}
A random group at any density $<1/2$ or in the few relators model contains 
(with overwhelming probability) a subgroup commensurable with the Coxeter group $\Gamma(m)$
for some $m\ge 7$, where $\Gamma(m)$ is the Coxeter group with Coxeter diagram
\begin{center}
\begin{picture}(70,10)(0,0)
\put(5,5){\circle*{3}}
\put(35,5){\circle*{3}}
\put(65,5){\circle*{3}}
\put(95,5){\circle*{3}}
\put(5,5){\line(1,0){90}}
\put(16,9){$m$}
\end{picture}
\end{center}
\end{theorem}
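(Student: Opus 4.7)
The plan is to re-examine the Thin Spine construction, paying close attention to the combinatorial type of the output 2-complex $\overline{M}(Z)$, and to identify it as a finite covering of a specific 2-complex naturally associated to $\Gamma(m)$. The example preceding the theorem establishes that $\Gamma(m)$ acts cocompactly on a 2-complex $\tilde{K}_m$ whose 1-skeleton is 4-valent with tetrahedral ``good'' local model at every vertex and whose 2-cells are regular $m$-gons. For any torsion-free finite-index subgroup $H\leq\Gamma(m)$, the quotient $K_m:=\tilde{K}_m/H$ is a compact 2-complex with $\pi_1(K_m)=H$ commensurable with $\Gamma(m)$, and $K_m$ itself has the form $\overline{M}(Z_m)$ for a spine $Z_m$ all of whose edges have a common length and whose associated $L_m$ consists of $m$-cycles.

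The first step is to revisit Section~\ref{section:thin_spine_section} and observe that the Thin Spine construction naturally produces a $Z$ with uniform edge length $\lambda$: every gluing operation---the cube move, the 8-prism move, and the 12-prism move---produces edges of length exactly $\lambda$, and the 8-prism and 12-prism moves were designed precisely as double and triple covers of the cube move. Each component of $L$ has length $n$, so its image in $Z$ bounds a combinatorial cycle of length $n/\lambda$. Choosing $\lambda$ and an integer $m\geq 7$ with $\lambda m=n$ (feasible, since $\lambda$ is a free parameter of the theorem and $n$ may be taken as large as desired), every 2-cell of $\overline{M}(Z)$ is then combinatorially an $m$-gon.

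Next, I would argue that $\overline{M}(Z)$ covers $K_m$ as an abstract 2-complex: the two complexes share the same local combinatorial data at every vertex (4-valent, good), every edge (three faces meeting), and every face ($m$-gon). Consequently, the universal covers $\widetilde{\overline{M}(Z)}$ and $\tilde{K}_m$ are isomorphic as abstract 2-complexes, and there is a canonical combinatorial covering map $\overline{M}(Z)\to K_m$ that realizes $\pi_1(\overline{M}(Z))$ as a finite-index subgroup of $H\leq\Gamma(m)$. This gives the desired commensurability.

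The main obstacle is to verify that the full Thin Spine construction---including the auxiliary moves (tears, trades, bizene/bicrown assembly) and the co-orientation correction of Section~\ref{subsection:coorientation}---preserves the property of being assembled from cube-move atoms (or the standard prism covers thereof). Since all these moves can be described as gluings of pieces that each embed $\Gamma(m)$-equivariantly inside $\tilde{K}_m$, the verification is local: for each move, identify the corresponding local modification inside $\tilde{K}_m$. This is essentially bookkeeping, but must be performed carefully for every move type, and is the only substantive work beyond what has already been done in the earlier sections.
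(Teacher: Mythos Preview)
Your high-level strategy---identify $\overline{M}(Z)$ as a finite cover of the quotient $\tilde{K}_m/\Gamma(m)$ by matching local combinatorics---is exactly the paper's strategy, and your reduction to ``each component of $L$ is a combinatorial $m$-gon in $Z$'' is correct. However, the claim that the Thin Spine construction naturally produces $Z$ with uniform edge length $\lambda$ is false, and this is where your argument breaks. You have only checked the cube and prism moves. Several other operations in \S\ref{section:thin_spine_section} create edges of length strictly greater than $\lambda$: the initial gluing in \S\ref{subsection:football_bubbles} produces a glued segment of length $s\approx (N+1)\lambda$; the adjustment in \S\ref{subsection:distribution} colors certain free indices black and glues the corresponding football completely, merging it with the two neighboring sticky segments into a single edge of length $\geq 3\lambda$; and the trade move in \S\ref{subsection:trades} likewise eliminates three footballs by gluing their edges in triples, again merging edges. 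So edge lengths in $Z$ are not uniform, and the formula $m=n/\lambda$ is meaningless.

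The paper's fix addresses a different invariant: not edge length, but the number of edges of $Z$ that each component of $L$ traverses. The operations listed above (super-compatible regluing, adjustments, trades) are precisely the ones that can change this edge-count, and they may affect different components of $L$ differently. The key observation---which replaces your ``bookkeeping'' paragraph---is that each such operation depends only on the \emph{types} (edge labels) of the pieces involved, not on which copy of $r$ in $L$ they came from. Since all components carry the identical label $r$, the required pieces of each type occur on every component, and one can simply distribute the moves evenly across components (taking further copies of $L$ to clear denominators if needed). This equalizes the edge-count to a common value $m$, and your covering argument then goes through.
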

\begin{proof}
The group $\Gamma(m)$ acts cocompactly on a simply-connected 
2-dimensional complex $\tilde{K}$ in $\H^3$ whose faces are totally geodesic regular 
hyperbolic $m$-gons, and whose vertices are 4-valent and are stabilized by a tetrahedral
symmetry group; the case $m=7$ is depicted in Figure~\ref{7fold_carpet}. 
So to prove the theorem it suffices to show that we can build our thin
spines $L \to Z$ in such a way that each component of $L$ maps over exactly $m$ edges of
the 4-valent graph $Z$. But this is elementary to arrange: the only point in the construction in
which the number of edges of the components of $L$ might vary is during the operations
of super-compatible gluing, the adjustments in \S~\ref{subsection:distribution}, and trades.
In each of these cases all that is relevant is the types of pieces being glued or traded, and
not which components of $L$ are involved. Since types of the desired kind for each move can
be found on any component, we can simply distribute the moves evenly over the different 
components, possibly after taking multiple copies of $L$ to clear denominators. The proof
immediately follows
\end{proof}

There is nothing very special about the commensurability classes $\Gamma(m)$, except that their
fundamental domains are so small, so that their local combinatorics are very easy to describe.

\begin{definition}
A {\em geodesic spine} $K$ is a finite 2-dimensional complex with totally geodesic
edges and faces which embeds in some hyperbolic 3-manifold $M$ with totally geodesic boundary
as a deformation retract. We say that a 2-dimensional orbifold complex $K'$ is obtained by
{\em orbifolding} $K$ if its underlying complex is homeomorphic to $K$, and it is obtained by
adding at most one orbifold point to each face of $K$. Note that each such $K'$ has an
(orbifold) fundamental group which is commensurable with an acylindrical 3-manifold group.
\end{definition}

In view of the level of control we are able to impose on the combinatorial type of the thin
spines we construct in \S~\ref{section:thin_spine_section}, we make the following conjecture:

\begin{conjecture}
For any fixed geodesic spine $K$, a random group $G$ --- either in the few relators model or the
density model with density $0<D<1/2$ --- contains subgroups commensurable with the
(orbifold) fundamental group of some orbifolding $K'$ of $K$ (with overwhelming probability).
\end{conjecture}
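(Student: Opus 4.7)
The plan is to exploit the combinatorial flexibility of the Thin Spine construction so that $\overline{M}(Z)$ inherits the local combinatorial structure of the 2-complex $\tilde K \subset \H^3$ stabilized by $\Gamma(m)$ for some $m\geq 7$. Recall that $\tilde K$ has 4-valent 1-skeleton, tetrahedral symmetry at each vertex, and 2-cells which are regular hyperbolic $m$-gons meeting three at a time along each edge. The good-spine condition of Definition~\ref{definition:spine} already forces the correct symmetry at vertices and edges of $\overline{M}(Z)$, so the only additional requirement is that every 2-cell of $\overline{M}(Z)$ (equivalently, every component of $L$) traverse exactly $m$ edges of $Z$ for a common integer $m\geq 7$.

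To arrange this, I would first choose $\lambda$ and $n$ so that the common edge length $\bar\lambda$ in the final $Z$ divides $n=|r|$ with quotient at least $7$. Since every edge of the final $Z$ has length equal to a controlled small multiple of $\lambda$ (coming from a cube move, a prism move, or a bunched lip), and since we are free to take $n$ with good divisibility properties, this is a straightforward arithmetic adjustment. Next, I would verify that the construction of \S\ref{section:thin_spine_section} can be run so that every component of $L$ contributes the same number of edges to $Z$. The only moves that could \emph{a priori} distinguish one component from another are the super-compatible regluings of \S\ref{subsection:supercompatible}, the distribution adjustment of \S\ref{subsection:distribution}, the tears of \S\ref{subsection:tear}, the trades of \S\ref{subsection:trades}, and the bicrown/bizene assembly together with the cube, 8-prism, and 12-prism moves. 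In every one of these, the pieces operated upon are selected purely by their label \emph{type}, not by their home component in $L$; and a componentwise application of the Chernoff estimate underlying \S\ref{subsection:pseudorandomness} shows that each type appears in the correct proportion on each individual component. By taking $N_0$ disjoint copies of $L$, where $N_0$ is the least common multiple of the combinatorial multiplicities appearing at each step, we can apportion every move evenly across the $N_0$ components.

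The main obstacle is the bookkeeping: verifying that a \emph{single} finite $N_0$ simultaneously clears every divisibility constraint imposed by the sequence of moves, and confirming that componentwise pseudorandomness persists through all the intermediate steps, not merely in bulk. Once this balance is established, the 2-complex $\overline{M}(Z)$ is locally isomorphic to the orbifold $\tilde K/\Gamma(m)$; the rigidity of a cocompact totally geodesic hyperbolic 2-complex in $\H^3$ then upgrades this local isomorphism to a commensurability of fundamental groups. Finally, combining with Theorem~\ref{theorem:random_acylindrical_subgroup}, which ensures that $\pi_1(\overline{M}(Z))$ remains $\pi_1$-injective and quasi-isometrically embedded as the remaining random relators are added, completes the proof of commensurability with $\Gamma(m)$.
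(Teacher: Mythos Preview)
The statement you are attempting to prove is a \emph{conjecture}: the paper explicitly does not prove it and offers no argument beyond the remark that the combinatorial control in \S\ref{section:thin_spine_section} makes it plausible. So there is no ``paper's proof'' to compare your proposal against.

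More seriously, your proposal does not address the conjecture at all; it addresses the (already proved) Commensurability Theorem~\ref{theorem:commensurability_theorem}. The conjecture fixes an \emph{arbitrary} geodesic spine $K$ --- a finite $2$-complex embedded in some hyperbolic $3$-manifold with totally geodesic boundary as a deformation retract --- and asks for a subgroup commensurable with an orbifolding of that particular $K$. You immediately specialize to the $2$-complex $\tilde K$ stabilized by $\Gamma(m)$, whose $1$-skeleton is $4$-valent with tetrahedral vertex symmetry and whose faces are all regular $m$-gons. A general geodesic spine need not have any of these features: its $1$-skeleton may have vertices of varying valence, the local models at vertices need not be the ``good'' tetrahedral one, the faces may have different combinatorial types, and there is no reason the edge pattern should be encoded by a single integer $m$. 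Your entire mechanism --- arranging that each component of $L$ runs over exactly $m$ edges of $Z$ --- is tailored to this one family and says nothing about how to force $\overline{M}(Z)$ to be locally modeled on an arbitrary prescribed $K$.

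Even restricted to the $\Gamma(m)$ case, your sketch contains an inaccuracy: the edges of the final $Z$ do \emph{not} all have a common length $\bar\lambda$ (the Thin Spine Theorem only guarantees a lower bound $\lambda$), so the divisibility argument in your first paragraph is misdirected. What actually matters, and what the paper's proof of Theorem~\ref{theorem:commensurability_theorem} uses, is only that each component of $L$ maps over the same \emph{number} of edges of $Z$; this is arranged by distributing the moves evenly over components, as you correctly identify in your second paragraph.
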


\section{Acknowledgments}
Danny Calegari was supported by NSF grant DMS 1005246. Henry Wilton was supported by an
EPSRC Career Acceleration Fellowship. We would like to thank Fran\c cois Dahmani for
raising the main question addressed by this paper, and for his continued interest.
We would also like to thank Pierre de la Harpe for comments and corrections.


\begin{thebibliography}{00}
\bibitem[Agol(2013)]{Agol_VHC}
	I. Agol (with an appendix with D. Groves and J. Manning),
	\emph{The virtual Haken conjecture},
	Doc. Math. {\bf 18} (2013), 1045--1087
\bibitem[Bonahon(2002)]{Bonahon_Structures}
	F. Bonahon
	\emph{Geometric structures on 3-manifolds},
	Handbook of geometric topology, 
	North-Holland, Amsterdam, 2002, 93--164 
\bibitem[Bowditch(1998)]{Bowditch}
	B. Bowditch,
	\emph{Cut points and canonical splittings of hyperbolic groups},
	Acta Math. {\bf 180} (1998), no. 2, 145--186
\bibitem[Bridson--Haefliger(1999)]{Bridson_Haefliger}
	M. Bridson and A. Haefliger,
	\emph{Metric spaces of nonpositive curvature},
	Grund. der math. Wiss. Springer-Verlag Berlin 1999
\bibitem[Calegari--Walker(2013)]{Calegari_Walker_RR}
	D. Calegari and A. Walker,
	\emph{Random rigidity in the free group},
	Geom. Topol. {\bf 17} (2013), 1707--1744
\bibitem[Calegari--Walker(2015)]{Calegari_Walker}
	D. Calegari and A. Walker,
	\emph{Random groups contain surface subgroups},
	Jour. AMS {\bf 28} (2015), no. 2, 383--419
\bibitem[Dahmani--Guirardel--Przytycki(2011)]{Dahmani_Guirardel_Przytycki}
	F. Dahmani, V. Guirardel, and P. Przytycki.
	\emph{Random groups do not split},
	Math. Ann. {\bf 249} (2011), no. 3, 657--673
\bibitem[Gromov(1993)]{Gromov}
	M. Gromov,
	\emph{Asymptotic invariants of infinite groups},
	Geometric group theory, vol. 2; LMS Lecture Notes {\bf 182} (Niblo and Roller eds.)
	Cambridge University Press, Cambridge, 1993
\bibitem[Kahn--Markovic(2011)]{Kahn_Markovic}
	J. Kahn and V. Markovic,
	\emph{The good pants homology and a proof of the Ehrenpreis conjecture},
	preprint, arXiv:1101.1330
\bibitem[Kapovich--Kleiner(2000)]{Kapovich_Kleiner}
	M. Kapovich and B. Kleiner,
	\emph{Hyperbolic groups with low dimensional boundary},
	Ann. Sci. \'Ecole Norm. Sup. (4) {\bf 33} (2000), no.\ 5, 647--669
\bibitem[Keevash(2014)]{Keevash}
	P. Keevash,
	\emph{The existence of designs},
	preprint, arXiv:1401.3665
\bibitem[Lyndon--Schupp(1977)]{Lyndon_Schupp}
	R. Lyndon and P. Schupp,
	\emph{Combinatorial group theory},
	Ergeb. der Math. und ihr. Grenz. {\bf 89}, Springer-Verlag, Berlin-New York, 1977
\bibitem[Ollivier(2007)]{Ollivier}
	Y. Ollivier,
	\emph{Some small cancellation properties of random groups},
	Internat. J. Algebra Comput. {\bf 17} (2007), no. 1, 37--51
\bibitem[Stallings(1968)]{Stallings}
	J. Stallings,
	\emph{On torsion-free groups with infinitely many ends},
	Ann. Math. {\bf 88} (1968), 312--334
\bibitem[Stallings(1983)]{Stallings_graphs}
	J. Stallings,
	\emph{Topology of finite graphs},
	Invent.\ Math.\ {\bf 71} (1983), no.\ 3, 551--565
\end{thebibliography}
\end{document}